\def\ps@pprintTitle{%
   \let\@oddhead\@empty
   \let\@evenhead\@empty
   \let\@oddfoot\@empty
   \let\@evenfoot\@oddfoot
}
\pgfplotsset{compat=1.18}
\crefname{thm}{Theorem}{Theorems}
\crefname{lem}{Lemma}{Lemmas}
\crefname{defn}{Definition}{Definitions}
\crefname{figure}{Fig.}{Figs.}
\crefname{table}{Table}{Tables}
\crefname{algorithm}{Algorithm}{Algorithms}
\crefname{subsection}{Section}{Sections}
\newtheorem{thm}{Theorem}[section]
\newtheorem{lem}{Lemma}[section]
\newtheorem{prop}{Proposition}[section]
\newtheorem{defn}{Definition}[section]
\newtheorem{rem}{Remark}[section]
\newtheorem{assum}{Assumption}[section]
\newcommand{\wlg}{w.l.o.g\xspace}
\newcommand{\lchain}{L-chain\xspace}
\newcommand{\lchains}{L-chains\xspace}
\newcommand{\nlintersec}{minimal \((\level+1)\)-intersection\xspace}
\newcommand{\dofs}{DoFs\xspace}
\newcommand{\var}{\texttt}
\newcommand{\RR}{\mathbb{R}}
\newcommand{\NN}{\mathbb{N}}
\newcommand{\mbf}[1]{\boldsymbol{#1}}
\newcommand{\image}{\mathrm{Im}}
\newcommand{\kernel}{\mathrm{Ker}}
\newcommand{\cohomo}{\mathrm{H}}
\newcommand{\complex}[1]{\mathfrak{#1}}
\newcommand{\domain}{\Omega}
\newcommand{\sideconfig}{S}
\newcommand{\level}{\ell}
\newcommand{\degree}{p}
\newcommand{\bspan}[1]{\spanop(#1)}
\newcommand{\meshsymb}{Q}
\newcommand{\meshel}{\meshsymb}
\newcommand{\mesh}{\mathcal{\meshsymb}}
\NewDocumentCommand\interbox{mg}{\square_{#1 \IfNoValueF{#2}{,#2}}}
\newcommand{\knotvec}{\Xi}
\newcommand{\knot}{\xi}
\newcommand{\bsp}{B}
\newcommand{\hb}{H}
\newcommand{\thb}{T}
\newcommand{\funcspace}[1]{\mathbb{#1}}
\newcommand{\basis}[1]{\mathcal{#1}}
\newcommand{\bspbasis}{\basis{\bsp}}
\newcommand{\hbbasis}{\basis{\hb}}
\newcommand{\thbbasis}{\basis{\thb}}
\newcommand{\bspspace}{\funcspace{\bsp}}
\newcommand{\hbspace}{\funcspace{\hb}}
\newcommand{\xbsp}{\beta}
\newcommand{\scltest}{\tau}
\newcommand{\scltrial}{\sigma}
\newcommand{\vectest}{\boldsymbol{v}}
\newcommand{\vectrial}{\boldsymbol{u}}
\newcommand{\forcing}{\boldsymbol{f}}
\newcommand{\solution}{\boldsymbol{u}}
\newcommand{\Hone}{H^{1}(\domain)}
\newcommand{\Hcurl}{\boldvec{H}(\curl;\domain)}
\newcommand{\Ltwo}{L^2(\domain)}
\newcommand{\Hcurlzero}{\boldvec{H}_0(\curl;\domain)}
\newcommand{\boldvec}[1]{\boldsymbol{\mathrm{#1}}}
\newcommand{\abs}[1]{\lvert #1 \rvert}
\newcommand{\Bll}{\bspbasis_{\level, \level+1}^{\boldvec{0}}}
\newcommand{\chain}[2]{C_{#1, #2}}
\DeclareMathOperator{\supp}{supp}
\DeclareMathOperator{\mot}{mot}
\DeclareMathOperator{\trunc}{trunc}
\DeclareMathOperator{\closedsupp}{\overline{supp}}
\DeclareMathOperator{\grad}{\boldvec{grad}}
\DeclareMathOperator{\curl}{curl}
\DeclareMathOperator{\spanop}{span}
\begin{document}
\begin{frontmatter}
	% Paper title
	\title{
		Construction of exact refinements for the two-dimensional hierarchical B-spline de
		Rham complex
	}
	\author[tud]{Diogo C. Cabanas\corref{cor}}
	\ead{d.costacabanas@tudelft.nl}
	\author[byu]{Kendrick M. Shepherd}
	\ead{kendrick\_shepherd@byu.edu}
	\author[tud]{Deepesh Toshniwal}
	\ead{d.toshniwal@tudelft.nl}
	\author[usc]{Rafael Vázquez}
	\ead{rafael.vazquez@usc.es}

	\address[tud]{
		Delft Institute of Applied Mathematics, Delft University of Technology, The Netherlands
	}
	\address[byu]{
		Civil \& Construction Engineering, Brigham Young University, Provo UT 84602, USA
	}
	\address[usc]{
		Department of Applied Mathematics, University of Santiago de Compostela, and Galician
		Centre for Mathematical Research and Technology (CITMAga), Santiago de Compostela, Spain
	}
	\cortext[cor]{Corresponding Author}
\begin{abstract}
The de Rham complex arises naturally when studying problems in electromagnetism and fluid
mechanics.
Stable numerical methods to solve these problems can be obtained by using a discrete de Rham
complex that preserves the structure of the continuous one.
This property is not necessarily guaranteed when the discrete function spaces are
hierarchical B-splines, and research shows that an arbitrary choice of refinement domains
may give rise to spurious harmonic fields that ruin the accuracy of the solution.
We will focus on the two-dimensional de Rham complex over the unit square \(\Omega \subseteq
\mathbb{R}^2\), and provide theoretical results and a constructive algorithm to ensure that
the structure of the complex is preserved: when a pair of functions are in conflict some
additional functions, forming an L-chain between the pair, are also refined.
Another crucial aspect to consider in the hierarchical setting is the notion of
admissibility, as it is possible to obtain optimal convergence rates of numerical solutions
and improved stability by limiting the multi-level interaction of basis functions.
We show that, under a common restriction, the admissibility class of the first space of the
discrete complex persists throughout the remaining spaces.
As such, admissible refinement can be combined with our new algorithm to obtain admissible
meshes that also respect the structure of the de Rham complex.
Moreover, we detail how our algorithm can be easily included in standard adaptive mesh
refinement schemes.
Finally, we include numerical results that motivate the importance of the previous concerns
for the vector Laplace and Maxwell eigenvalue problems.
\end{abstract}
\end{frontmatter}

\section{Introduction}

Over the last few years, there have been several developments in numerical methods designed
to approximate the solution of partial differential equations (PDEs). Finite element
exterior calculus (FEEC) \cite{Arnold2006, Arnold2009} is one of these developments, which
has gained a lot of traction since its earlier publications \cite{Arnold2002} due to its
ability to describe how stable numerical methods can be achieved for a broad class of PDEs.
Key elements used to ensure the stability of discrete methods are the topological and
differential-geometric structure of PDEs, which are captured in FEEC through complexes and,
more specifically, their cohomology classes. By choosing an appropriate finite element
space, it is possible not only to approximate the continuous solution, but also to guarantee
that the method is stable --- provided that the continuous and discrete complexes are
cohomologically equivalent.
An example of such a complex, which we approach in this work, is the \emph{de Rham} complex.
Several PDEs can be understood, and successfully discretized, in the context of the de Rham
complex, such as those arising in  Stokes flow, Navier-Stokes, Maxwell's equation,
Vector/Scalar Laplacians, among others \cite{Arnold2006}.

Another important breakthrough in the field of numerical analysis was the introduction of
spline functions \cite{Boor2001, Schumaker2007} as a compelling alternative to the more
classical finite element method \cite{Hughes2000, Ciarlet2002, Brenner2008}. The former
approach was given the name of isogeometric analysis (IGA), and it is explained in great
detail in the seminal paper by Hughes et al. \cite{Hughes2005}. The motivation behind this
method was to create a better synergy between numerical-analysis and industrial-design
teams, which have been using splines in CAD-based modelling for a long time
\cite{Farin2006}. One of the benefits of the higher smoothness allowed in splines is that
they provide better approximation properties per degree of freedom (DoF) than standard
\(C^0\) or \(C^{-1}\) finite element spaces \cite{Evans2009, Bressan2019, Sande2019,
Sande2020}. Another upside in IGA is the flexibility in defining new spline spaces from
basic splines, and in particular locally refined spline spaces. These allow us to increase
the density of \dofs near relevant local features of the solution being computed, increasing
the accuracy in those regions and reducing the computational cost. Several types of splines
have been explored in the IGA literature, such as T-splines \cite{Sederberg2003,
Sederberg2004}, hierarchical B-splines \cite{Vuong2011, Buffa2015, D'Angella2018} and
truncated hierarchical B-splines \cite{Giannelli2012, Giannelli2013}, and the more recent
polar splines \cite{Speleers2021}.
In this work, our attention will be centered around hierarchical B-splines and their
truncated counterpart.
The general idea of hierarchical basis is to take a sequence of function space basis, each
corresponding to a level providing more DoFs than the former, and then applying a
selection mechanism to determine which basis functions from each level are present in the
final hierarchical basis \cite{Kraft1998}.

When working with a hierarchical B-spline complex in \(\RR^n, n\geq1\), it is not true that
this will generate an exact subcomplex of the continuous one for an arbitrary refinement
\cite{Evans2018, Shepherd2024}.
This is due to the fact that a poor choice of the refinement pattern can remove basis
functions from a coarser level and add basis functions from a finer level such that they
describe function spaces with distinct topological properties.
This leads to the introduction of spurious harmonic forms, which can easily ruin the
accuracy and stability of the numerical method being used.

Focusing on the two-dimensional case, the first contribution of this work is a constructive
method to modify the refinement pattern, and to recover an exact complex for spaces of
hierarchical B-splines, by adding some elements to refine. Leveraging the theoretical
results from \cite{Shepherd2024}, the modified refinement works as follows: when a pair of
functions supported in the refined domain is \textit{problematic}, following the definition
in \cite{Shepherd2024}, we also refine the support of some additional functions, forming an
L-chain between the pair.
The L-chain is simply a path in taxicab geometry where two points, in our case basis
function indices, are connected through successive exhaustion of their difference in each
dimension. 
By this change, we can topologically connect degrees of freedom in such a way that
cohomology is preserved between levels.
We present theoretical results to show that the shape of the L-chain reduces the amount of
recursive checks for problematic pairs compared to other possible refinement paths.
We also provide algorithms to easily implement the refinement scheme in any existing IGA
code.

Another important consideration when discretizing with hierarchical B-spline spaces is the
concept of admissibility, which limits the interaction of functions from very different
refinement levels \cite{Buffa2015}. Admissibility is a required property to obtain optimal
convergence rates in the theoretical study of adaptive methods \cite{Buffa2017, Buffa2022}.
Additionally, it has been shown on time-dependent problems that admissibility improves the
stability of the method when applying adaptive coarsening \cite{Carraturo2019}. The second
contribution of this work is to show that admissibility for the first space of the complex
implies the same property, with the same admissibility class, for all the spaces in the
complex. This result is valid for both hierarchical and truncated hierarchical spline bases,
and requires a common assumption in the IGA literature: the refined regions must be formed
by the union of supports of functions from the previous level. Furthermore, our new
algorithm can be easily combined with admissible refinement, to obtain an exact complex such
that all the spaces respect the admissibility property.

The structure of the paper is as follows: in \Cref{sec:preliminaries} we give a basic
overview of the de Rham complex and its discretization. Next, in \Cref{sec:spline-spaces} we
describe the construction of hierarchical B-splines and the corresponding hierarchical
B-spline de Rham complex. In \Cref{sec:admissibility,sec:exact-meshes}, we present the main
theoretical results for admissibility and exactness, respectively. These are followed by
\Cref{sec:algorithms} with their accompanying algorithms. We test our approach numerically
in \Cref{sec:numerical-results}, where we showcase various examples using both the vector
Laplace problem and Maxwell's eigenvalue problem to illustrate the use of the method as an
adaptive scheme. Finally, in \Cref{sec:conclusion} we give some concluding remarks.
 \section{Preliminaries}\label{sec:preliminaries}

We start by introducing an overview of the basic objects and concepts used in FEEC. We
specialize this overview for the two-dimensional case, which is our primary focus. Moreover,
we simplify the exposition by avoiding the use of differential forms and instead use vector
calculus proxies. For a more general discussion, the reader is invited to consult
\cite{Arnold2018}.

\subsection{The Hilbert de Rham complex}
Let $\domain \subset \RR^2$ be a bounded domain with a piecewise-smooth Lipschitz boundary.
The following sequence, $\complex{R}$, of Hilbert spaces and connecting differential
operators is called the Hilbert de Rham complex,
\begin{equation*}
	\begin{tikzcd}
		\complex{R}~:~0\arrow{r}{} & \mathbb{R} \arrow{r}{\iota} & \Hone \arrow{r}{\grad} & \Hcurl \arrow{r}{\curl} & \Ltwo\arrow{r}{} &0\;,
	\end{tikzcd}
\end{equation*}
where $\iota$ is the inclusion operator, $\grad$ is the gradient operator defined in
coordinates by \(\grad u = (\partial_x u, \partial_y u)\), \(\curl\) is the scalar curl
operator in two dimensions and is defined in coordinates as $\curl \mbf{v} := \partial_x v_2
	- \partial_y v_1$, $\Ltwo$ is the space of square-integrable functions on $\domain$, and
\begin{align*}
	\Hone  & := \left\{ u \in \Ltwo : \grad u \in [\Ltwo]^2 \right\}\;,                           \\
	\Hcurl & := \left\{ \boldsymbol{v} \in [\Ltwo]^2 : \curl \boldsymbol{v} \in \Ltwo \right\}\;.
\end{align*}
We will refer to these spaces as \(\funcspace{X}^0 = \Hone, \funcspace{X}^1=\Hcurl\) and
\(\funcspace{X}^2=\Ltwo\). Similarly, in this paper, we will particularly work with the
Hilbert de Rham complex with homogeneous boundary conditions, namely
\begin{equation*}
	\begin{tikzcd}
		\complex{R}_0~:~0\arrow{r}{} &  H^1_0(\domain) \arrow{r}{\grad} & \boldvec{H}_0(\curl;\domain) \arrow{r}{\curl} & \Ltwo\arrow{r}{\int} &\mathbb{R} \arrow{r}{} &0\;,
	\end{tikzcd}
\end{equation*}
where $H^1_0(\domain)$ and $\boldvec{H}_0(\curl;\domain)$ correspond to $\Hone$ and
$\Hcurl,$ respectively, but with vanishing trace, meaning null boundary
conditions of each space:
\begin{align*}
	H^1_{0}\left(\domain\right) &\coloneq \left\{u \in H^1\left(\domain\right):
	u|_{\partial\domain} = 0\right\}, \\
		\boldvec{H}_0\left(\curl;\domain\right) &\coloneq \left\{\boldsymbol{v} \in
		\boldvec{H}\left(\curl;\domain\right): \boldsymbol{v}|_{\partial\domain}\times
		\boldvec{n} = 0\right\},
\end{align*}
where \(\partial \domain\) is the boundary of \(\domain\) and \(\boldvec{n}\) is the outward
unit normal.

The sequences $\complex{R}$ and $\complex{R}_0$ are called complexes because of two
properties. First, each differential operator maps from one space into the next space,
meaning that, for the $\complex{R}$ and $\complex{R}_0$ sequences it respectively holds
\begin{gather*}
	\image(\grad) \subset \Hcurl\;,\qquad
	\image(\curl) \subset \Ltwo\;,\\
\image(\grad) \subset \boldvec{H}_0(\curl;\domain)\;,\qquad
	\image(\curl) \subset \Ltwo\;,
\end{gather*}
where \(\image\) denotes the image of a given map; similarly, we will use \(\kernel\) for
the kernel. 
Second, the composition of the two operators is identically zero, meaning for example
$\curl (\grad f) = 0$ for any $f \in \Hone$.
Given these two properties, we can define the following (quotient) spaces called the
\(k\)-cohomology spaces of $\complex{R}$, and $\complex{R}_0$, for \(k=0,1,2\),
\begin{gather*}
	\cohomo^0(\complex{R}) := \kernel(\grad)/\mathbb{R},\
	\cohomo^1(\complex{R}) := \kernel(\curl)/\image(\grad),\
	\cohomo^2(\complex{R}) := \Ltwo/\image(\curl),\\
\cohomo^0(\complex{R}_0) := \kernel(\grad),\
	\cohomo^1(\complex{R}_0) := \kernel(\curl)/\image(\grad),\
	\cohomo^2(\complex{R}_0) := \Ltwo/\image(\curl),
\end{gather*}
The elements of $\cohomo^k(\complex{R})$ are called harmonic scalar/vector fields, and the
elements of $\cohomo^k(\complex{R}_0)$ are called harmonic scalar/vector fields with zero
trace.
These cohomology spaces are important because, first, they capture the topological
properties of the domain $\domain$ and, second, the solutions to scalar and vector
Laplacians are well-defined up to harmonic fields. The following two paragraphs elaborate on
this structure that underlies the Hilbert de Rham complex.

Due to the Universal Coefficient Theorem \cite[Theorem~3.2]{Hatcher}, there is a one-to-one
correspondence between the cohomology spaces and the so-called Betti numbers of $\domain$,
$\beta_j(\domain)$: the vector space dimension of $\cohomo^j(\complex{R})$ is equal to the
number of $j$-dimensional holes in $\domain$. That is, $\dim
	\cohomo^0(\complex{R})=\beta_0(\domain)$ is the number of connected components of $\domain$,
$\dim \cohomo^1(\complex{R})=\beta_1(\domain)$ is the number of holes in $\domain$, and
$\dim \cohomo^2(\complex{R})=\beta_2(\domain)=0$ is the number of voids (none in the planar
setting since there are no two-dimensional holes in $\domain$). For the space of
differential forms with homogeneous boundary conditions, due to Lefschetz Duality
\cite[Theorem~3.43]{Hatcher}, this above-mentioned correspondence between dimension of
cohomology spaces and Betti numbers is flipped in that $\dim
	\cohomo^0(\complex{R}_0)=\beta_2(\domain), \dim \cohomo^1(\complex{R}_0)=\beta_1(\domain),$
and $\dim \cohomo^2(\complex{R})=\beta_0(\domain).$ In particular, in this work we will
focus only on simply-connected domains $\domain$ with homogeneous boundary conditions, which
implies that $\cohomo^0(\complex{R}_0) = \emptyset = \cohomo^1(\complex{R}_0)$ and
$\cohomo^2(\complex{R}_0) \cong \RR$.

The second reason why cohomology spaces are important is that harmonic fields in
$\cohomo^1(\complex{R}_0)$ are the solutions to the homogeneous vector Laplace problem,
$\Delta \mbf{v} = 0$, and the harmonic fields in $\cohomo^0(\complex{R}_0)$ are the
solutions to the homogeneous scalar Laplace problem, $\Delta u = 0$, with homogeneous
essential boundary conditions. Given the assumption of simply-connectedness for $\domain$,
this means that both problems admit only the trivial solution.

\subsection{Structure-preserving discretizations}

The core principle of FEEC is to preserve the structural connections mentioned above in the
discrete setting. This can be achieved by choosing appropriate finite element spaces for
approximating scalar and vector fields. Failure to preserve the cohomological structure in
the discrete setting can cause, in particular, spurious harmonic fields to appear in the
discrete setting, which can lead to highly inaccurate solutions to scalar and vector Laplace
problems. Examples can be found in \cite{Arnold2018}, for instance, and also in Section
\ref{sec:numerical-results} of this paper.

The FEEC recipe to preserve the cohomological structure is ensuring that the cohomology
spaces of the discrete and continuous complexes can be isomorphically identified with maps
that are bounded and commute with the operators defined above. By doing so, we can also
ensure that we do not introduce any spurious harmonic scalar/vector fields, as these are
intimately related with the cohomology spaces. In short, we want to construct a discrete
version, \(\complex{R}_h\), of the de Rham complex \(\complex{R}\) (or of its counterpart
with homogeneous boundary conditions, \(\complex{R}_0\)), namely,
\begin{equation*}
	\begin{tikzcd}
		\complex{R}_h~:~ \funcspace{X}^0_h(\domain) \arrow{r}{\grad} & \funcspace{X}^1_h(\domain) \arrow{r}{\curl} & \funcspace{X}^2_h(\domain)\;,
	\end{tikzcd}
\end{equation*}
for some appropriate choice of finite element spaces \(\funcspace{X}^j_h(\domain),\;
j\in\{0,1,2\}\), and where \(h\) is a parameter inversely proportional to the number of
\dofs.

The relevant properties of these finite element spaces have been highlighted in
\cite{Arnold2009,Arnold2018}, and we showcase them here briefly for ease of readability. For
a more comprehensive overview of the properties and corresponding results, we refer the
reader to the works just mentioned. First, we require that the finite element spaces
considered have enough similarity with the continuous ones to be able to approximate them
arbitrarily well. To be concrete, we necessitate that
\[
	\lim_{h \to 0} \inf_{u_h \in \funcspace{X}^j_h}\|u_h -u \| = 0,\quad u \in
	\funcspace{X}^j, j\in \{0,1,2\}.
\]
The second requisite is that the chosen discrete spaces, together with the operators of the
complex \(\complex{R}\), also form a complex, or, in other words,
\[
	\grad \funcspace{X}^0_h \subseteq \funcspace{X}^{1}_h, \quad \curl \funcspace{X}^1_h \subseteq \funcspace{X}^{2}_h,
\]
and the composition of the two operators is zero, as in the continuous case. The final
property we mention here is crucial for the correct behaviour of a discrete solution, and is
the one that establishes a connection between the cohomology spaces of the two complexes.
That is, it should be possible to define a set of bounded projection maps from
\(\complex{R}\) to \(\complex{R}_{h}\) (or from \(\complex{R}_0\) to \(\complex{R}_{h}\)),
in such a way that a commuting diagram is formed. By guaranteeing the three properties
listed above, one can ensure an accurate and stable solution of the mixed formulation of
Laplace problems.

It is precisely this choice of appropriate finite element spaces that will be tackled in the
rest of the paper, more specifically, when the finite element spaces are hierarchical
B-spline spaces. Whether bounded commuting projectors exist for hierarchical B-spline
complexes is still an open issue and outside the scope of the present work. We limit
ourselves to characterizing and fixing the kind of refinements that introduce spurious
harmonic forms, and therefore invalidate the existence of such projectors altogether.
 \section{Spline spaces}\label{sec:spline-spaces}

In this section, all required notation and definitions will be introduced for the univariate
B-splines, and later extended into tensor product B-splines and, finally, the hierarchical
B-spline de Rham complex. We will simplify the notation used in \cite{Evans2018} and
\cite{Shepherd2024} to the two-dimensional case.

\subsection{Univariate B-splines}\label{subsec:univariate-splines}

In order to define univariate B-splines of polynomial degree \(\degree\geq 1\), we need to
define a knot vector. Namely, for \(m \geq 1\), a knot vector \(\knotvec =
(\knot_1,\dots,\knot_{m+\degree+1})\) is a sequence of non-decreasing real numbers, or
knots, satisfying
\[
	\knot_1=\dots=\knot_{\degree}<\knot_{p+1}\leq \dots \leq
	\knot_{m+1}<\knot_{m+2}=\dots=\knot_{m+\degree+1}.
\]
We repeat the first and last knots \(p\) times, which differs from the usual
choice of \(p+1\) repetitions, and enforces homogeneous boundary conditions for splines of
degree \(p\);
we also assume that no knot is repeated more than \(p\) times.
To simplify the notation, we will consider that \(\knot_1=0\) and \(\knot_{m+\degree+1}=1\)
from now on.

We will denote by \(Z\) the set of knots after removing repetitions, in such a way that
\(Z=\{\zeta_1, \dots, \zeta_{z+1}\}\) is the largest subset of \(\knotvec\) such that
\[
	\zeta_i<\zeta_{i+1},\ i\in\{1,\dots,z\}.
\]
This induces a partition of the unit interval, given by the subintervals
\begin{equation}\label{eq:breakpoint-indices}
	\mathcal{I} \coloneqq  \left\{(\zeta_i,\zeta_{i+1}): i\in\{1,\dots,z\}\right\}.
\end{equation}

Using the knot vector \(\knotvec\) we can define two spaces that will be used to establish
discrete complexes.
Let \(\bspspace^{0}(\knotvec)\) be the space of piecewise-polynomial functions of degree
\(\degree\) and regularity \(C^{p-r}\) at knots with multiplicity \(r\) that
vanish at \(\knot \in \{0,1\}\).
Similarly, we set \(\bspspace^{1}(\knotvec)\) as the space of piecewise-polynomial functions
of degree \(\degree-1\) with \(C^{p-r-1}\) regularity at knots with multiplicity \(r\), and
with no conditions imposed at the boundaries.

With the chosen boundary conditions, the dimension of the space \(\bspspace^{j}(\knotvec)\)
is \(m+j, j\in \{0,1\}\), and we can define a set of \(m+j\) basis functions that span it.
As mentioned in the introduction, we will choose to work with the B-splines \(\bsp^j_{i}, i
\in \{1,\dots,m+j\},\) as the basis for \(\bspspace^{j}(\knotvec)\), and denoting the set as
\(\bspbasis^{j}(\knotvec)\). We refer the reader to \cite{Piegl1997, Schumaker2007} for more
details on the construction of B-splines (e.g. using the Cox-de Boor recursion algorithm).
Nonetheless, we highlight a fact about the support of B-splines that will be used
extensively throughout the paper:
\[
	\supp(\bsp^j_{i}) = (\knot_{i}, \knot_{i+p+1-j}),\; j\in \{0, 1\}\;.
\]

\subsection{Tensor Product B-splines}\label{subsec:tensor-product-splines}

Now that we have fully characterized univariate B-spline spaces, we can use them to
construct a two-dimensional tensor-product B-spline space. Note that we will restrict the
definitions and notation to these conditions, but the construction is altogether identical
for higher-dimensional spaces.

Moreover, keeping in line with our desire to work with hierarchical spline spaces, we will
also introduce the relevant notation for the levels of a hierarchy. We will use \(L\) to
refer to the highest non-trivial refinement level and \(0\leq \level \leq L\) for a general
level.

Let \(\bspbasis^{j_1}_{\level}(\knotvec_{\level, 1})\) and
\(\bspbasis^{j_2}_{\level}(\knotvec_{\level, 2})\) be two univariate spline spaces under the
assumptions of \Cref{subsec:univariate-splines}. We denote their associated polynomial
degrees and dimensions as \(p_{(\level, k)}\) and \(m_{(\level, k)}+j_k\), respectively,
where \(j_k \in \{0, 1\}\) and \(k\in\{1,2\}\). We can then define the tensor product space
\[
	\bspspace_{\level}^{\boldvec{j}} \coloneqq \bspspace^{j_1}(\knotvec_{(\level, 1)})
	\otimes \bspspace^{j_2}(\knotvec_{(\level, 2)}).
\]
A basis for this space is given by \[
	\bspbasis^{\boldvec{j}}_\level \coloneqq \left\{
	\bsp^{\boldvec{j}}_{\boldvec{i}, \level}(\xi_1, \xi_2) \coloneqq
	\bsp^{j_1}_{i_1}(\xi_1) \bsp^{j_2}_{i_2}(\xi_2):
	\bsp^{j_k}_{i_k} \in \bspbasis^{j_k}_\level(\knotvec_{(\level, k)})
	\right\}\;.
\]
We have used a bold and upright notation for the two-dimensional multi-indexes
\(\boldvec{j}\) and \(\boldvec{i}\), and will do so throughout the paper. To access their
\(k\)-th entry we respectively write, as above, \(j_k\) and \(i_k\), and we will denote the
\(i_k\)-th knot of the level \(\level\) univariate knot vector \(\knotvec_{(\level, k)}\) as
\(\knot_{i_k, \level, k}\). Lastly, we will also use the notation
\(\abs{\boldvec{j}}\coloneqq \abs{j_1} + \abs{j_2}\) to refer to the 1-norm on vector
spaces.

From this point forward, we will mainly refer to multi-variate basis functions either using
the three indices \(\boldvec{j}, \boldvec{i}, \text{ and } \level\) or omitting all of them
whenever they are not necessary --- unless otherwise specified. We are now capable of defining
a set of finite element spaces such that the discrete complex they give is cohomologically
equivalent to the continuous complex. The relevant spaces are
\[
	\bspspace^0_{\level} \coloneqq \bspspace_{\level}^{(0,0)}\;, \qquad \bspspace^1_{\level}
	\coloneqq \bspspace_{\level}^{(1,0)} \times \bspspace_{\level}^{(0,1)}\;, \qquad
	\bspspace^2_{\level} \coloneqq \bspspace_{\level}^{(1,1)}\;,
\]
where the multi-indices \(\boldvec{j}\) have been explicitly shown for clarity.
The discrete complex follows naturally \cite{Buffa2010, Buffa2011}  \[
	\begin{tikzcd}
		\complex{\bsp}_h~:~ \bspspace^0_{\level} \arrow{r}{\grad} & \bspspace^1_{\level}
		\arrow{r}{\curl} & \bspspace^2_{\level}\;.
	\end{tikzcd}
\]
To help ease notation, we will use \(\boldvec{0}=(0,0)\) throughout the paper. Taking into
account the way in which the \(\grad\) and \(\curl\) operators change the polynomial degree
and regularity of functions it becomes clear that
\[
	\grad(\bspspace^0_{\level}) \subseteq \bspspace^1_{\level}\;,\qquad
	\curl(\bspspace^1_{\level}) \subseteq \bspspace^2_{\level}\;.
\]

Finally, we also define the tensor-product mesh \(\mesh_{\level}\) induced by the two sets
of intervals \(\mathcal{I}_{(\level, 1)}\) and \(\mathcal{I}_{(\level, 2)}\), corresponding
to  \(\knotvec_{(\level, 1)}\) and \(\knotvec_{(\level, 2)}\), respectively. We define the
mesh of level $\level$ as
\[
	\mesh_{\level} = \left\{I_{1}\times I_{2}: I_{1} \in \mathcal{I}_{(\level, 1)}, I_{2}
	\in \mathcal{I}_{(\level, 2)}\right\},
\]
and we will refer to \(\meshel \in \mesh_{\level}\) as an element.

\subsection{Hierarchical B-splines}\label{subsec:hierarchical-splines}

With all the single-level tensor-product spline spaces from
\Cref{subsec:tensor-product-splines} in hand, we will construct a hierarchical B-spline
space. Our focus is primarily introducing the notation required for the main results of this
work. For more details on hierarchical B-spline spaces, their basis functions, and
hierarchical meshes, we point the interested reader to \cite{Bracco2019, Buffa2022}.

We start by defining a set of nested domains and function spaces defined on them,
assigning each domain and corresponding space to a specific refinement level. For the
spaces, we require that \(\bspspace^{\boldvec{j}}_0\subseteq
\bspspace^{\boldvec{j}}_1\subseteq\dots\subseteq \bspspace^{\boldvec{j}}_L\) for all valid
\(\boldvec{j}\). Additionally, the nested set of closed domains \(\domain_\level\subset
\mathbb{R}^2\) need to satisfy
\[
	\domain =: \domain_0\supseteq \domain_1 \supseteq\dots \supseteq \domain_{L}
	\supseteq \domain_{L+1}\coloneqq \emptyset \;.
\]
Furthermore, we also introduce the following assumption on how the subdomains that determine
the mesh can be chosen.
\begin{assum}\label{assum:refinement-domains}
	We assume that \(\domain_{\level+1}\) is given as the union of
	supports of B-splines in \(\bspbasis_\level^{\boldvec{0}}\), that is \[
		\domain_{\level+1} \coloneqq \bigcup_{\bsp \in
		\tilde{\bspbasis}^{\boldvec{0}}_\level}\closedsupp{(\bsp)},\ \level = \{0,\dots,L-1\},
	\]
	where \(\tilde{\bspbasis}^{\boldvec{0}}_\level\subseteq \bspbasis^{\boldvec{0}}_\level\)
	is some set of level \(\level\) B-splines.
\end{assum}

This leads us to defining a hierarchical mesh, \(\mesh\), as
\begin{equation}\label{eq:hier-mesh}
	\mesh \coloneqq \bigcup_{\level \in \{0,\dots,L\}} \left\{ \meshel \in \mesh_\level:
	\meshel \subseteq \domain_\level \wedge \meshel \not \subseteq \domain_{\level+1}
	\right\}.
\end{equation}
As a consequence of \Cref{assum:refinement-domains}, all elements of \(\mesh\) are pairwise
disjoint and \(\bigcup_{\meshel \in \mesh} \overline{\meshel} = \domain\).
Any element \(\meshel \in \mesh\) is denoted as active.

Consequently, a standard way of selecting a basis \(\hbbasis_L^{\boldvec{j}}\) for
hierarchical B-spline spaces uses the following algorithm \cite{Giannelli2012, Buffa2022}:
\begin{enumerate}
	\item \(\hbbasis_0^{\boldvec{j}} \coloneqq \bspbasis^{\boldvec{j}}_0\).
	\item For \(\level = 0,\dots,L-1\), let \(
	      \hbbasis_{\level+1}^{\boldvec{j}} \coloneqq
	      \hbbasis^{\boldvec{j}}_{\level+1, A} \cup \hbbasis^{\boldvec{j}}_{\level+1, B}
	      \), where
	      \begin{align*}
\hbbasis^{\boldvec{j}}_{\level+1, A} & \coloneqq \left\{
		      \bsp \in \hbbasis_{\level}^{\boldvec{j}}: \supp(\bsp) \not \subseteq
		      \domain_{\level+1}
		      \right\},                                                \\
\hbbasis^{\boldvec{j}}_{\level+1, B} & \coloneqq \left\{
		      \bsp \in \bspbasis^{\boldvec{j}}_{\level+1} : \supp(\bsp) \subseteq
		      \domain_{\level+1}
		      \right\}\;.
	      \end{align*}
\end{enumerate}
We then set \(\hbspace^{\boldvec{j}}_{\level} \coloneqq
\bspan{\hbbasis^{\boldvec{j}}_{\level}}\), for any level \(0\leq \level \leq L\).
As we did for elements, we denote a basis function \(\bsp\) as active whenever \(\bsp \in
\hbbasis^{\boldvec{j}}_{L}\), and as inactive whenever the previous condition is not
true.

The corresponding hierarchical spaces are, then,
\[
	\hbspace^0_{\level} \coloneqq \hbspace_{\level}^{(0,0)}\;, \qquad \hbspace^1_{\level}
	\coloneqq \hbspace_{\level}^{(1,0)} \times \hbspace_{\level}^{(0,1)}\;, \qquad
	\hbspace^2_{\level} \coloneqq \hbspace_{\level}^{(1,1)}\;,
\]
and, for each \(\level=0,\dots, L\), the hierarchical complex is
\begin{equation}\label{eq:hierarchical-complex}
	\begin{tikzcd}
		\complex{\hb}_h~:~ \hbspace^0_{\level} \arrow{r}{\grad} & \hbspace^1_{\level}
		\arrow{r}{\curl} & \hbspace^2_{\level}.
	\end{tikzcd}
\end{equation}

\begin{rem} \label{rem:mesh}
	Under \Cref{assum:refinement-domains}, the three spaces in
	\eqref{eq:hierarchical-complex} are associated to the same hierarchical mesh. Moreover,
	under the same assumption, any active element \(\meshel \in \mesh \cap \mesh_\level\) of
	level $\level$, is contained in the support of  at least one active function of the same
	level.
\end{rem}

The basis just defined above for hierarchical spaces can be slightly altered to preserve
partition of unity and to reduce the support of the basis functions through a procedure
called truncation. For this, we introduce the higher-level representation of B-splines.
Namely, given \(\bsp^{\boldvec{j}}_{\boldvec{i}, \level} \in
\bspbasis^{\boldvec{j}}_{\level}\), it is possible to write the basis function as
\begin{equation}\label{eq:finer-rep}
	\bsp^{\boldvec{j}}_{\boldvec{i}, \level} \equiv \sum_{\bsp \in
		\bspbasis^{\boldvec{j}}_{\level+1}} c^{\level,
			\level+1}_{\bsp}(\bsp^{\boldvec{j}}_{\boldvec{i}, \level}) \bsp,\quad c^{\level,
			\level+1}_{\bsp}(\bsp^{\boldvec{j}}_{\boldvec{i}, \level}) \in \mathbb{R}
\end{equation}
due to nestedness of the function spaces. Using this property, we can then define the
truncation operator at level $\ell+1$ as in \cite{Giannelli2012}
\begin{equation}\label{eq:truncated-rep}
	\trunc^{\level+1}(\bsp^{\boldvec{j}}_{\boldvec{i}, \level}) \coloneqq
	\sum_{\substack{\bsp \in \bspbasis^{\boldvec{j}}_{\level+1}:
			\\ \supp(\bsp)\not \subseteq \domain_{\level+1}}}
	c^{\level, \level+1}_{\bsp}(\bsp^{\boldvec{j}}_{\boldvec{i}, \level})\bsp.
\end{equation}
The truncation operation described above is linear and, in particular, can be written as a
matrix multiplication, see \cite{D'Angella2020} for more details.

Using this, the new truncated hierarchical basis is given by
\begin{enumerate}
	\item \(\thbbasis_0^{\boldvec{j}} \coloneqq \bspbasis^{\boldvec{j}}_0\).
	\item For \(\level = 0,\dots,L-1\), let \(
	      \thbbasis_{\level+1}^{\boldvec{j}} \coloneqq
	      \thbbasis^{\boldvec{j}}_{\level+1, A} \cup \hbbasis^{\boldvec{j}}_{\level+1, B}
	      \), where
	      \[
		      \thbbasis^{\boldvec{j}}_{\level+1, A} \coloneqq \left\{
		      \trunc^{\level+1}(\bsp): \bsp\in \thbbasis_{\level}^{\boldvec{j}} ~\wedge~ \supp(\bsp) \not \subseteq \domain_{\level+1}
		      \right\}\;.
	      \]
\end{enumerate}
Both the hierarchical basis and the truncated hierarchical basis span the same space
\cite{Giannelli2012}, or in other words, \(\bspan{\thbbasis^{\boldvec{j}}_{\level}} =
\bspan{\hbbasis^{\boldvec{j}}_{\level}} = \hbspace^{\boldvec{j}}_{\level}\) for all levels
\(\level = 0,\dots, L\).
However, the truncated basis is preferable in some instances. 
By removing redundant terms of the linear combination in \eqref{eq:finer-rep}, in the sense
that some basis functions are already present in the next level, we can reduce the support
of the resulting truncated basis functions, and therefore improve sparsity patterns and
matrix conditioning; for instance of the mass matrices associated to scalar and vector
Laplace problems which are associated to the de Rham complex \cite{Buffa2022}.
Also, the truncated basis recovers the convex partition of unity property of tensor-product
B-splines.

Given a B-spline \(B \in \bspbasis^{\boldvec{j}}_{\level+1}\), from the relation
\eqref{eq:finer-rep} we say that \(\bsp^{\boldvec{j}}_{\boldvec{i}, \level} \in
\bspbasis^{\boldvec{j}}_{\level}\) is a parent function of \(B\) if
\(c^{\level,\level+1}_{\bsp}(\bsp^{\boldvec{j}}_{\boldvec{i}, \level}) \not = 0\). We will
also define the notion of mother of a truncated B-spline \cite{Giannelli2013}. Given a
truncated basis function \(\thb^{\boldvec{j}}_{\boldvec{i}, \level} \in
\thbbasis^{\boldvec{j}}_{\level}\), we say that \(\bsp^{\boldvec{j}}_{\boldvec{i}, \level}\)
is the mother of \(\thb^{\boldvec{j}}_{\boldvec{i}, \level}\) if
\begin{equation} \label{eq:mother-spline}
	\thb^{\boldvec{j}}_{\boldvec{i}, \level} =
	\trunc^{L}\left(\cdots\left(\trunc^{\level+1}\left(\bsp^{\boldvec{j}}_{\boldvec{i},
				\level}\right)\right)\right)\;,
\end{equation}
and we denote this relationship as \(\mot(\thb^{\boldvec{j}}_{\boldvec{i}, \level})
\coloneqq \bsp^{\boldvec{j}}_{\boldvec{i}, \level}\). We will also make use of the trivial
identification \(\mot(\bsp^{\boldvec{j}}_{\boldvec{i}, \level}) =
\bsp^{\boldvec{j}}_{\boldvec{i}, \level}\) for any basis function in
\(\hbbasis^{\boldvec{j}}_{\level}\). Moreover, the definition given by
\eqref{eq:mother-spline} can easily be extended to inactive basis functions, since
\eqref{eq:truncated-rep} is still well-defined at every level.

We mentioned before how spurious harmonic forms can be introduced for arbitrary refinement
domains. The reason for this can be understood by carefully considering the
construction of the hierarchical spaces. The way in which the selection mechanism of
active and inactive basis functions in each level works can, in principle, lead to a discrete
complex with different cohomology spaces than the original one. This can happen because the
coarse basis functions we remove from one level and the finer basis functions we replace
them with in the next level can span spaces with different topological properties. See
\cite[Fig. 7]{Shepherd2024} and \cite[Fig. 13]{Evans2018}, and the surrounding discussions,
for some illuminating examples of this phenomenon.
 \section{Admissibility}\label{sec:admissibility}

In adaptive methods for hierarchical splines, optimal order of convergence is proved by
using the property of admissibility, which limits the difference of level between
interacting functions \cite{Buffa2015,Buffa2017}. We will prove that, under
\Cref{assum:refinement-domains}, admissibility for the first space implies it for all the
spaces of the complex.

\begin{defn}\label{def:admissibility}
	We say that \(\hbspace^{j}_L\), \(j \in \{0,1,2\}\), is \(\hbbasis\)- or
	\(\thbbasis\)-admissible of class \(m \geq 2\) if on any element \(\meshel \in \mesh\),
	the non-vanishing basis functions in \(\hbbasis^{j}_L\) or \(\thbbasis^{j}_L\),
	respectively, belong to at most \(m\) successive levels.
	Concretely, for \(\meshel \in \mesh\) and any \(\bsp^{j}_{i, \level}, \bsp^{j}_{i',
	\level'} \in \hbbasis^{j}_L\), or \(\thbbasis^{j}_L\), we have
	\[
		\left(
			\supp\left(\bsp^{j}_{i, \level}\right) \cap \meshel \neq \emptyset \wedge
			\supp\left(\bsp^{j}_{i', \level'}\right) \cap \meshel \neq \emptyset 
		\right)
		\implies \abs{\level - \level'} < m.
	\]

	By extension, we say that the hierarchical complex \eqref{eq:hierarchical-complex} is
	\(\hbbasis\)- or \(\thbbasis\)-admissible of class \(m\) if the corresponding previous
	condition holds for every \(j \in \{0,1,2\}\).

	If there are no restrictions on the number of interacting levels we say \(m=\infty\).
\end{defn}

The following definition relates a basis function with the ones involved when computing the
differential operators, that we call co-face B-splines. From now on, we will use
\(\boldvec{\delta}_k\) to denote the two-dimensional index with value \(1\) at position
\(k\), and \(0\) in the other position.

\begin{defn}\label{def:int-der-relation} Let \(\bsp^{\boldvec{j}}_{\boldvec{i}, \level} \in
	\bspbasis^{\boldvec{j}}_{\level}\) and \(\bsp^{\boldvec{j}'}_{\boldvec{i}', \level} \in
	\bspbasis^{\boldvec{j}'}_{\level}\). We say that
	\(\bsp^{\boldvec{j}}_{\boldvec{i},\level} \) is a facet B-spline of
	\(\bsp^{\boldvec{j}'}_{\boldvec{i}', \level} \) and, conversely,
	\(\bsp^{\boldvec{j}'}_{\boldvec{i}', \level} \) is a co-face B-spline of
	\(\bsp^{\boldvec{j}}_{\boldvec{i}, \level} \) if, for some \(k \in \{1,2\} \),
	\(\boldvec{j}' - \boldvec{j} = \boldvec{\delta}_k \) and \(	\boldvec{i}' - \boldvec{i} =
	c\boldvec{\delta}_k,\) with \(c \in \{0, 1\}\).
\end{defn}

We also introduce two auxiliary results that relate their support before and after
truncation.
\begin{lem}\label{lem:hb-supp}
	Let \(\bsp^{\boldvec{j}}_{\boldvec{i}, \level} \in
	\bspbasis^{\boldvec{j}}_{\level}\), and
	let \(\bsp^{\boldvec{j}'}_{\boldvec{i}', \level} \in
	\bspbasis^{\boldvec{j}'}_{\level}\) be a co-face B-spline of
	\(\bsp^{\boldvec{j}}_{\boldvec{i}, \level}\).
	Then, it holds that \(
	\supp(\bsp^{\boldvec{j}}_{\boldvec{i}, \level}) \supseteq
	\supp(\bsp^{\boldvec{j}'}_{\boldvec{i}', \level}).
	\)
\end{lem}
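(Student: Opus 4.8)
The plan is to reduce this two-dimensional statement to a one-dimensional comparison of knot intervals by exploiting the tensor-product structure of the basis. Since $\bsp^{\boldvec{j}}_{\boldvec{i}, \level}(\xi_1,\xi_2) = \bsp^{j_1}_{i_1}(\xi_1)\,\bsp^{j_2}_{i_2}(\xi_2)$, its support is the Cartesian product $\supp(\bsp^{j_1}_{i_1})\times\supp(\bsp^{j_2}_{i_2})$ of the univariate supports, and likewise for the co-face spline. By \Cref{def:int-der-relation} the co-face $\bsp^{\boldvec{j}'}_{\boldvec{i}', \level}$ differs from $\bsp^{\boldvec{j}}_{\boldvec{i}, \level}$ in exactly one direction $k$, with $\boldvec{j}' = \boldvec{j} + \boldvec{\delta}_k$ and $\boldvec{i}' = \boldvec{i} + c\boldvec{\delta}_k$ for some $c \in \{0,1\}$; in the complementary direction the two splines share the same univariate index pair and hence the same univariate support. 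Thus the full containment follows once it is established for the $k$-th tensor factor.

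For that factor I would use the support formula $\supp(\bsp^j_i) = (\knot_i, \knot_{i+p+1-j})$ from \Cref{subsec:univariate-splines}, applied with degree $p = p_{(\level,k)}$ and the knots of $\knotvec_{(\level,k)}$. The facet B-spline then has univariate support $(\knot_{i_k,\level,k},\, \knot_{i_k+p+1-j_k,\level,k})$, while substituting $j_k \mapsto j_k+1$ and $i_k \mapsto i_k+c$ yields the co-face support $(\knot_{i_k+c,\level,k},\, \knot_{i_k+c+p-j_k,\level,k})$. Both endpoints index into the same knot vector $\knotvec_{(\level,k)}$ with the same base degree $p_{(\level,k)}$, since only the differential index $j_k$ and the translation index $i_k$ change.

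It then remains to check the two endpoint inequalities for containment. Because the knot sequence is non-decreasing, the left endpoints satisfy $\knot_{i_k+c,\level,k} \ge \knot_{i_k,\level,k}$ (as $c \ge 0$), and the right endpoints satisfy $\knot_{i_k+c+p-j_k,\level,k} \le \knot_{i_k+p+1-j_k,\level,k}$ (as $c \le 1$ gives $i_k+c+p-j_k \le i_k+p+1-j_k$). Taking the Cartesian product with the identical complementary factor gives $\supp(\bsp^{\boldvec{j}'}_{\boldvec{i}', \level}) \subseteq \supp(\bsp^{\boldvec{j}}_{\boldvec{i}, \level})$. The argument is purely combinatorial and I foresee no genuine obstacle; the only point meriting care is the bookkeeping in the support formula---raising $j_k$ by one contracts the right end by one knot index, which exactly absorbs the at-most-one-knot shift $c \in \{0,1\}$ of the left end, so that the inclusion holds uniformly in $c$.
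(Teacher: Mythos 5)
Your proposal is correct and is essentially the paper's own argument spelled out in detail: the paper's one-line proof invokes exactly the facts you verify (the co-face has equal or lower degree and its local knot vectors are contained in those of the original B-spline), which amounts to your endpoint comparison $\knot_{i_k+c}\ge\knot_{i_k}$ and $\knot_{i_k+c+p-j_k}\le\knot_{i_k+p+1-j_k}$ in the single direction $k$ where the indices differ, combined with the tensor-product structure in the remaining direction.
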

\begin{proof}
	The result holds because the co-face B-spline \(\bsp^{\boldvec{j}'}_{\boldvec{i}',
		\level}\) has equal or lower degree, and its local knot vectors are contained in the
	local knot vectors of \(\bsp^{\boldvec{j}}_{\boldvec{i}, \level}\).
\end{proof}
\begin{lem}\label{lem:thb-supp}
	Let \(\thb^{\boldvec{j}}_{\boldvec{i}, \level} \in
	\thbbasis^{\boldvec{j}}_{\level}\) and \(\bsp^{\boldvec{j}}_{\boldvec{i},
		\level}=\mot(\thb^{\boldvec{j}}_{\boldvec{i}, \level})\).
	Also, let \(\bsp^{\boldvec{j}'}_{\boldvec{i}', \level} \in
	\bspbasis^{\boldvec{j}'}_{\level}\) be a co-face B-spline of
	\(\bsp^{\boldvec{j}}_{\boldvec{i}, \level}\), and \(\thb^{\boldvec{j}'}_{\boldvec{i}',
		\level} =
	\trunc^{L}\left(\cdots\left(\trunc^{\level+1}\left(\bsp^{\boldvec{j}'}_{\boldvec{i}',
				\level}\right)\right)\right)\).
	Then, it holds that \[
		\supp(\thb^{\boldvec{j}}_{\boldvec{i}, \level}) \supseteq
		\supp(\thb^{\boldvec{j}'}_{\boldvec{i}', \level}).
	\]
\end{lem}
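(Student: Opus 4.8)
The plan is to reduce the statement to a monotonicity property of the index sets that survive truncation, and ultimately to a univariate claim about knot insertion. Writing $\bsp := \bsp^{\boldvec{j}}_{\boldvec{i},\level}$ and $\bsp' := \bsp^{\boldvec{j}'}_{\boldvec{i}',\level}$, so that $\thb^{\boldvec{j}}_{\boldvec{i},\level} = \trunc^{L}(\cdots\trunc^{\level+1}(\bsp))$ and $\thb^{\boldvec{j}'}_{\boldvec{i}',\level} = \trunc^{L}(\cdots\trunc^{\level+1}(\bsp'))$, the first observation I would record is that every coefficient appearing in a finer representation \eqref{eq:finer-rep} is nonnegative: univariate knot insertion produces nonnegative two-scale coefficients for B-splines of any degree, these multiply across the two tensor-product factors, and each truncation \eqref{eq:truncated-rep} merely discards a subset of the terms. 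Consequently, if we write the level-$k$ representation of $\trunc^{k}(\cdots\trunc^{\level+1}(\bsp))$ in the basis $\bspbasis^{\boldvec{j}}_{k}$, no cancellation can occur, so its support is exactly the union of the supports of those level-$k$ B-splines occurring with strictly positive coefficient; the same holds for $\bsp'$. This reduces the lemma to tracking which finer B-splines survive.

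Accordingly, for each level $k$ with $\level \le k \le L$ I would track the sets $T_k$ and $T_k'$ of level-$k$ B-splines of type $\boldvec{j}$ and $\boldvec{j}'$ occurring with positive coefficient in the partially truncated mother and co-face, respectively. The invariant I want to propagate is the \emph{coverage} property: every $\gamma' \in T_k'$ is a co-face, in the sense of \Cref{def:int-der-relation}, of some $\gamma \in T_k$. At $k=\level$ this holds by hypothesis, since $T_\level = \{\bsp\}$ and $T_\level' = \{\bsp'\}$. Granting the invariant at level $L$, each $\gamma' \in T_L'$ is a co-face of some $\gamma \in T_L$, whence $\supp(\gamma') \subseteq \supp(\gamma)$ by \Cref{lem:hb-supp}; taking unions over $T_L'$ and $T_L$ and using the no-cancellation description of the supports yields $\supp(\thb^{\boldvec{j}'}_{\boldvec{i}',\level}) \subseteq \supp(\thb^{\boldvec{j}}_{\boldvec{i},\level})$, as desired.

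It then remains to carry the invariant from level $k$ to $k+1$, which splits into two steps. The truncation step is immediate from \Cref{lem:hb-supp}: if a finer co-face $\gamma'$ satisfies $\supp(\gamma') \not\subseteq \domain_{k+1}$ and $\gamma$ is a finer B-spline having $\gamma'$ as co-face, then $\supp(\gamma) \supseteq \supp(\gamma')$ forces $\supp(\gamma) \not\subseteq \domain_{k+1}$, so $\gamma$ is retained whenever $\gamma'$ is. The refinement step, by the positivity characterization above and the fact that a coarse B-spline refines into exactly those finer B-splines whose support it contains, reduces to a single-index statement: if $\bsp^{\boldvec{j}'}_{\boldvec{i}',k}$ is a co-face of $\bsp^{\boldvec{j}}_{\boldvec{i},k}$, then every level-$(k+1)$ B-spline with positive coefficient in the refinement of $\bsp^{\boldvec{j}'}_{\boldvec{i}',k}$ is a co-face of some level-$(k+1)$ B-spline with positive coefficient in the refinement of $\bsp^{\boldvec{j}}_{\boldvec{i},k}$. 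Since the positive-coefficient finer index set of a single B-spline is the product of its two univariate factors, and the co-face relation is nontrivial in only one coordinate (the other factor and its refinement being shared between $\bsp$ and $\bsp'$), this further reduces to a univariate claim in the coordinate where the degree drops.

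The univariate claim, which I expect to be the main obstacle, is the following: if the degree-$(p{-}1)$ B-spline $\bsp^1_{i'}$ is a co-face of the degree-$p$ B-spline $\bsp^0_i$ (so $i' \in \{i, i+1\}$), then for every finer degree-$(p{-}1)$ B-spline $\bsp^1_{\tilde{\imath}'}$ with $\supp(\bsp^1_{\tilde{\imath}'}) = (\tilde{\xi}_{\tilde{\imath}'}, \tilde{\xi}_{\tilde{\imath}'+p}) \subseteq (\xi_i, \xi_{i+p+1}) = \supp(\bsp^0_i)$ there is a finer degree-$p$ B-spline $\bsp^0_{\tilde{\imath}}$, with $\tilde{\imath} \in \{\tilde{\imath}'-1, \tilde{\imath}'\}$, whose support is also contained in $\supp(\bsp^0_i)$. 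I would prove this by testing the two candidates, whose supports are $(\tilde{\xi}_{\tilde{\imath}'}, \tilde{\xi}_{\tilde{\imath}'+p+1})$ and $(\tilde{\xi}_{\tilde{\imath}'-1}, \tilde{\xi}_{\tilde{\imath}'+p})$; each can fail containment only at one end. If both fail, then, using that every coarse knot is also a finer knot, one is forced into $\tilde{\xi}_{\tilde{\imath}'} = \xi_i$ and $\tilde{\xi}_{\tilde{\imath}'+p} = \xi_{i+p+1}$, so that $[\xi_i, \xi_{i+p+1}]$ would contain only the $p+1$ finer knots $\tilde{\xi}_{\tilde{\imath}'}, \dots, \tilde{\xi}_{\tilde{\imath}'+p}$, whereas it contains the $p+2$ coarse knots $\xi_i, \dots, \xi_{i+p+1}$; this contradicts that refinement only adds knots. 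Hence at least one candidate fits, and assembling this single-index statement through the tensor product and the level induction gives the truncated analogue of \Cref{lem:hb-supp}.
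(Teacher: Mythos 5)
Your route is genuinely different from the paper's: the paper argues by contradiction, locally on a single active element \(\meshel\) where the truncated mother vanishes, comparing the minimal sets of level-\(\tilde{\level}\) B-splines that represent \(\bsp^{\boldvec{j}}_{\boldvec{i},\level}\) and \(\bsp^{\boldvec{j}'}_{\boldvec{i}',\level}\) on \(\meshel\) and invoking \Cref{lem:hb-supp}, whereas you run a global induction over levels with a coverage invariant on the sets of B-splines that survive with positive coefficient. Several pieces of your plan are sound: nonnegativity of the two-scale coefficients in \eqref{eq:finer-rep} (hence no cancellation, so supports are unions of supports of surviving children), the truncation-step propagation via \Cref{lem:hb-supp}, and the univariate knot-counting argument showing that one of the two candidate face B-splines \(\bsp^0_{\tilde{\imath}'}\), \(\bsp^0_{\tilde{\imath}'-1}\) has support contained in the coarse support; that counting argument is correct.

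The genuine gap is the statement you treat as a known fact and on which the refinement step of your invariant entirely rests: that a coarse B-spline refines into \emph{exactly} those finer B-splines whose support it contains, i.e.\ the implication (support contained \(\Rightarrow\) strictly positive coefficient). This is not a general property of knot insertion, and it is false in the most standard setting. Take \(p=2\), the coarse B-spline with local knots \((0,0,0,1)\), namely \(\bsp(x)=(1-x)^2\), and a finer knot vector containing \((0,0,0,\tfrac12,1,1,1)\): the finer B-spline \(\xbsp\) with local knots \((\tfrac12,1,1,1)\) satisfies \(\supp(\xbsp)=[\tfrac12,1]\subseteq[0,1]=\supp(\bsp)\), yet its coefficient is zero, because evaluating at \(x=1\) gives \(\bsp(1)=0\) while \(\xbsp(1)=1\) and every other finer B-spline vanishes there. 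The failure mechanism is a knot of multiplicity \(p+1\); it is excluded by this paper's convention that no knot is repeated more than \(p\) times (which is precisely why the paper repeats boundary knots \(p\), not \(p+1\), times), but then the implication you need becomes a nontrivial strict-positivity property of discrete B-splines under that multiplicity restriction --- a property your proposal neither proves nor even identifies as depending on the convention (your own single-insertion intuition does not compose: in the Boehm recursion \(\hat{\alpha}_j=(1-\lambda_j)\alpha_{j-1}+\lambda_j\alpha_j\), either factor can vanish at knot coincidences, and ruling out simultaneous vanishing is exactly where the multiplicity hypothesis must enter). So the step ``locate a face B-spline with contained support, hence with positive coefficient'' is unjustified, and it is the crux: the direction you do have (positive \(\Rightarrow\) contained) and the nonnegativity facts are standard, but they do not close the coverage invariant. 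The paper's local argument sidesteps any global characterization of which children carry positive coefficients, which is what makes its route workable as stated.
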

\begin{proof}
	Let us consider an active element \(\meshel \in \mesh \cap \mesh_{\tilde{\level}}\),
	with \(\tilde{\level}>\level\), where \(\thb^{\boldvec{j}}_{\boldvec{i}, \level}\)
	is truncated, in the sense that \(Q \not \subset \supp
	(\thb^{\boldvec{j}}_{\boldvec{i}, \level})\) and \(Q \subset \supp
	(\bsp^{\boldvec{j}}_{\boldvec{i}, \level})\).
	Let \(\mathcal{R}^{\boldvec{j}}_{Q, \tilde{\level}} \subset
	\bspbasis^{\boldvec{j}}_{\tilde{\level}}\) denote the smallest subset of B-splines
	of level \(\tilde{\level}\) that are linearly combined to represent
	\(\bsp^{\boldvec{j}}_{\boldvec{i}, \level}\) on \(\meshel\).
	Since \(\thb^{\boldvec{j}}_{\boldvec{i}, \level}\) vanishes on \(\meshel\), the
	support of B-splines in \(\mathcal{R}^{\boldvec{j}}_{Q, \tilde{\level}}\) must be
	contained in \(\Omega_{\tilde{\level}}\).

	Next, let \(\mathcal{R}^{\boldvec{j}'}_{\meshel, \tilde{\level}} \subset
	\bspbasis^{\boldvec{j}'}_{\tilde{\level}}\) be the smallest subset of B-splines that
	can be linearly combined to represent \(\bsp^{\boldvec{j}'}_{\boldvec{i}', \level}\)
	on \(\meshel\), and suppose that \(\thb^{\boldvec{j}'}_{\boldvec{i}', \level}\) does
	not vanish on \(\meshel\).
	Meaning, there exists some \(\xbsp' \in
	\mathcal{R}^{\boldvec{j}'}_{\meshel,\tilde{\level}}\) such that \(\supp(\xbsp')
	\not\subset \domain_{\tilde{\level}}\).
	Since \(\bsp^{\boldvec{j}'}_{\boldvec{i}', \level}\) is a co-face B-spline
	of \(\bsp^{\boldvec{j}}_{\boldvec{i}, \level}\), there is some \(\xbsp \in
	\mathcal{R}^{\boldvec{j}}_{\meshel, \tilde{\level}}\) that is a face B-spline of
	\(\xbsp'\).
	However, invoking \Cref{lem:hb-supp}, this would mean  that
	\(\thb^{\boldvec{j}}_{\boldvec{i}, \level}\) is not truncated on \(\meshel\),
	which is a contradiction.
\end{proof}
Finally, we prove that admissibility for \(\hbspace^{0}_{L}\) implies the same property
throughout the complex.

\begin{prop}\label{lem:admissibility}
	Under \Cref{assum:refinement-domains}, if \(\hbspace^{0}_{L}\) is
	\(\hbbasis\)- or \(\thbbasis\)-admissible of  class \(m\) so is \(\hbspace^{j}_{L}\) for
	every \(j \in \{1,2\}\).
\end{prop}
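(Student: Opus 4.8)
The plan is to reduce the statement to an element-wise bound on the levels of active basis functions. By \Cref{rem:mesh} all spaces of the complex share the hierarchical mesh \(\mesh\), so it suffices to fix an active element \(\meshel \in \mesh \cap \mesh_{\tilde{\level}}\) of level \(\tilde{\level}\) and to show that every active function of \(\hbbasis^{\boldvec{j}}_L\) (or \(\thbbasis^{\boldvec{j}}_L\)) not vanishing on \(\meshel\) has level in \(\{\tilde{\level}-m+1,\dots,\tilde{\level}\}\). Since \Cref{assum:refinement-domains} makes \(\domain_{\tilde{\level}+1}\) a union of level-\(\tilde{\level}\) elements, the interior of \(\meshel\) is disjoint from \(\domain_{\tilde{\level}+1}\), and no active function of level \(>\tilde{\level}\) can be supported there; the maximal level present is \(\tilde{\level}\) and the content is the lower bound. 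I would obtain it from the following claim: for every active \(\bsp^{\boldvec{j}}_{\boldvec{i},\level}\) (equivalently, for the mother \(\mot\) of an active truncated function) that does not vanish on \(\meshel\), there is an active \(\bsp^{\boldvec{0}}_{\boldvec{i}'',\level}\) of the same level \(\level\) that also does not vanish on \(\meshel\). Granting this, the level \(\level\) occurs among the active functions of \(\hbspace^0_L\) on \(\meshel\), whence \(\level \ge \tilde{\level}-m+1\) by the class-\(m\) admissibility of \(\hbspace^0_L\); this yields admissibility of \(\hbspace^{\boldvec{j}}_L\) for \(\boldvec{j}\in\{(1,0),(0,1),(1,1)\}\), i.e.\ for \(j\in\{1,2\}\). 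The hierarchical and truncated cases run in parallel, the former using \Cref{lem:hb-supp} and the latter \Cref{lem:thb-supp}.

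To prove the claim I would reduce the multi-index \(\boldvec{j}\) to \(\boldvec{0}\) one component at a time, so that the candidate \(\bsp^{\boldvec{0}}_{\boldvec{i}'',\level}\) is one of the (at most four) facet B-splines of \(\bsp^{\boldvec{j}}_{\boldvec{i},\level}\) in the sense of \Cref{def:int-der-relation}. By \Cref{lem:hb-supp} (resp.\ \Cref{lem:thb-supp}) each such facet has support containing \(\supp(\bsp^{\boldvec{j}}_{\boldvec{i},\level})\) (resp.\ its truncated support), so every facet is again non-vanishing on \(\meshel\); it remains to select one that is \emph{active}. A level-\(\level\) tensor-product B-spline is active precisely when its support lies in \(\domain_{\level}\) but not in \(\domain_{\level+1}\). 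The second condition is automatic: since \(\bsp^{\boldvec{j}}_{\boldvec{i},\level}\) is active, \(\supp(\bsp^{\boldvec{j}}_{\boldvec{i},\level})\not\subseteq\domain_{\level+1}\), and the support inclusion forces \(\supp(\bsp^{\boldvec{0}}_{\boldvec{i}'',\level})\not\subseteq\domain_{\level+1}\) for every facet. Hence the claim reduces to showing that at least one \(\boldvec{0}\)-facet satisfies \(\supp(\bsp^{\boldvec{0}}_{\boldvec{i}'',\level})\subseteq\domain_{\level}\).

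This last point is where \Cref{assum:refinement-domains} does the real work, and I expect it to be the main obstacle. Each facet enlarges the support of \(\bsp^{\boldvec{j}}_{\boldvec{i},\level}\) by a single level-\(\level\) knot span in each reduced coordinate direction, whereas \(\domain_{\level}\) is a union of supports of level-\((\level-1)\) B-splines from \(\bspbasis^{\boldvec{0}}_{\level-1}\), which are thick relative to that one-span enlargement. Intuitively, the active support \(\supp(\bsp^{\boldvec{j}}_{\boldvec{i},\level})\subseteq\domain_{\level}\) is thin in every direction where \(j_k=1\), so inside the coarse region it cannot abut the boundary of \(\domain_\level\) on both sides at once, and a one-span extension fits on at least one side. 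I would make this precise by slicing \(\domain_\level\) along lines orthogonal to a reduced direction: on each slice the relevant connected component of \(\domain_\level\) is at least as wide as a full coarse support, which strictly exceeds the extended facet width, so one of the two extensions stays inside. The delicate step is to upgrade this per-slice statement to a single facet that works uniformly over the whole orthogonal support strip, which again follows from the coarse-support structure of \(\domain_\level\). With the claim established, the bound \(\level\ge\tilde{\level}-m+1\) follows as above, and the truncated case is identical once \(\supp\) is read as the support of the truncated function and \Cref{lem:thb-supp} replaces \Cref{lem:hb-supp}.
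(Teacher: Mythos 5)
Your overall reduction --- fix an active element $\meshel$ of level $\tilde{\level}$, rule out levels above $\tilde{\level}$, and get the lower bound by exhibiting an active $0$-form whose level matches that of the offending $\boldvec{j}$-form --- is a reasonable plan, and the upper bound is fine. The gap is the key claim: it is \emph{not} true that an active $\boldvec{j}$-form of level $\level$ always admits an active $0$-form facet, nor even an active $0$-form of the \emph{same} level $\level$ not vanishing on $\meshel$. Counterexample to the facet step: take $\degree=2$, dyadic refinement, and let $\domain_{\level}$ be the union of the two level-$(\level-1)$ $0$-form supports $A=[0,6]\times[-2,4]$ and $B=[4,10]\times[4,10]$ (coordinates in level-$\level$ knot spans, placed in the interior of $\domain$), with $\domain_{\level+1}=\emptyset$. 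The $(1,0)$-form of level $\level$ with support $[4,6]\times[3,6]$ is active (its lower part lies in $A$, its upper part in $B$), but its two facets have supports $[3,6]\times[3,6]\ni(3.5,5)\notin A\cup B$ and $[4,7]\times[3,6]\ni(6.5,3.5)\notin A\cup B$, so neither is active. This is precisely the failure your slicing argument cannot exclude: on the bottom slice only the left extension fits, on the top slices only the right one, and no single facet works uniformly. (Your per-slice parity reasoning does go through for odd $\degree$ under dyadic refinement, but for even $\degree$ the coarse grid and the thin support can align as above.)

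The problem is deeper than the choice of facet. Refining further with $\domain_{\level+1}=\bigl([2,5]\times[1,4]\bigr)\cup\bigl([3,6]\times[1,4]\bigr)$ --- a union of level-$\level$ $0$-form supports, so \Cref{assum:refinement-domains} still holds --- removes the only two level-$\level$ $0$-forms whose supports lie in $\domain_{\level}$ and contain the cell $[4,5]\times[3,4]$, while the $(1,0)$-form above remains active and does not vanish on the active level-$(\level+1)$ elements inside that cell; on such an element the active $0$-forms have levels $\level-1$ and $\level+1$ only. (This does not contradict the proposition: in that configuration $\hbspace^{0}_{L}$ is only admissible of class $3$, so the hypothesis fails.) Any correct proof must therefore allow the comparison $0$-form to live at a level $\level'\le\level$, which is exactly what the paper does: arguing by contradiction, it takes the (possibly inactive) same-level $0$-form given by \Cref{lem:hb-supp,lem:thb-supp} and climbs through parent functions --- each parent's support contains the child's, so it still meets both elements; if it were active, admissibility of $\hbspace^{0}_{L}$ would already be violated, so it is inactive with support not contained in its level's subdomain --- until level $0$, where every support lies in $\domain_0=\domain$, a contradiction. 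Note your element-wise reduction survives with this weaker claim: an active $0$-form of level $\level'\le\level$ not vanishing on $\meshel$, combined with \Cref{rem:mesh}, gives $\tilde{\level}-\level\le\tilde{\level}-\level'\le m-1$.
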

\begin{proof}
	Since we will prove the result for both \(\hbbasis\)- and \(\thbbasis\)-admissibility,
	we will use \(\basis{X}\) to denote the basis in either of the two cases, and \(X\) for
	the corresponding basis functions.

	Let us suppose that some \(\hbspace^{j'}_L\), with \(j' \in \{1,2\}\), is not
	\(\basis{X}\)-admissible of class \(m\). In particular, this means that for some active
	elements \(\meshel \in \mesh\cap \mesh_{\level}\) and \(\tilde{\meshel} \in \mesh\cap
	\mesh_{\tilde{\level}}\), with \(\level, \tilde{\level} \in \{0,1,\dots,L\}\) and
	\(\tilde{\level} > \level+m-1\), we have at least one active level \(\level\) basis
	function \(X^{\boldvec{j}'}_{\boldvec{i}', \level} \in \basis{X}_{L}^{\boldvec{j}'}\),
	with \(\abs{\boldvec{j}'}=j'\), such that
	\begin{equation}\label{eq:thb-element-overlap}
		\supp(X^{\boldvec{j}'}_{\boldvec{i}',\level}) \cap \meshel \neq \emptyset
		\wedge \supp(X^{\boldvec{j}'}_{\boldvec{i}',\level})\cap \tilde{\meshel} \neq
		\emptyset.
	\end{equation}
	We will show that this will lead to a contradiction.	We start by observing that we
	can use \Cref{lem:hb-supp} or \Cref{lem:thb-supp}, to find basis functions such that the
	following chained containment holds,
	\begin{equation}\label{eq:thb-admissibility-containment}
		\supp(X^{\boldvec{0}}_{\boldvec{i}_{\level}, \level}) \supseteq \cdots \supseteq
		\supp(X^{\boldvec{j}}_{\boldvec{i}, \level}) \supseteq
		\supp(X^{\boldvec{j}'}_{\boldvec{i}',\level})\;,
	\end{equation}
	and where if \(X\) is a spline in \Cref{eq:thb-admissibility-containment} and
	\(\hat{X}\) is the spline to the right of \(X\), then \(\mot(X)\) is a face B-spline of
	\(\mot(\hat{X})\).

	It is true that \(X^{\boldvec{0}}_{\boldvec{i}_{\level},\level}\) can't be active, since
	this would violate the assumption on the \(\basis{X}\)-admissibility of
	\(\hbspace^{0}_{L}\) due to \eqref{eq:thb-element-overlap} and
	\Cref{assum:refinement-domains}, see \Cref{rem:mesh}. Therefore, because the support of
	\(X^{\boldvec{0}}_{\boldvec{i}_{\level},\level}\) contains \(\meshel \in \mesh\cap
	\mesh_{\level}\) we know that \(\supp(X^{\boldvec{0}}_{\boldvec{i}_{\level},\level})
	\not\subseteq \domain_{\level}\).

	Let \(X^{\boldvec{0}}_{\boldvec{i}_{\level-1},\level-1}\) be such that
	\(\mot(X^{\boldvec{0}}_{\boldvec{i}_{\level},\level})\) is used in the finer level
	representation of \(\mot(X^{\boldvec{0}}_{\boldvec{i}_{\level-1},\level-1})\), as in
	\eqref{eq:finer-rep}. Then, \(\supp(X^{\boldvec{0}}_{\boldvec{i}_{\level-1},\level-1})
	\cap \meshel \neq \emptyset\) because \(X^{\boldvec{0}}_{\boldvec{i}_{\level},\level}\)
	does not vanish  on \(\meshel\) and, as mentioned above,
	\(\supp(X^{\boldvec{0}}_{\boldvec{i}_{\level},\level}) \not\subseteq \domain_{\level}\).

	Again, if \(X^{\boldvec{0}}_{\boldvec{i}_{\level-1},\level-1}\) were active it would
	lead to a contradiction on the admissibility of \(\hbspace^{0}_{L}\) because of
	\eqref{eq:thb-element-overlap} and \Cref{assum:refinement-domains}. So, we know that
	\(X^{\boldvec{0}}_{\boldvec{i}_{\level-1},\level-1}\) is inactive and
	\(\supp(X^{\boldvec{0}}_{\boldvec{i}_{\level-1},\level-1}) \not\subseteq
	\domain_{\level-1}\). We can repeat these steps recursively until we arrive at some
	\(X^{\boldvec{0}}_{\boldvec{i}_0, 0}\) that is inactive because
	\(\supp(X^{\boldvec{0}}_{\boldvec{i}_{0}, 0}) \not \subseteq \domain_{0} = \domain\),
	which is a contradiction.
\end{proof}
 \section{Exact Meshes}\label{sec:exact-meshes}

We now move on to define precisely when a hierarchical refinement leads to an inexact
complex, and how it can be altered by refining some additional functions to form an exact
complex that satisfies the sufficient conditions in \cite{Shepherd2024}. To begin, we
introduce some notation to help the readability of the forthcoming proofs. First, we define
the subset \(\Bll \subseteq \bspbasis^{\boldvec{0}}_{\level}\) composed of B-splines in
level \(\level\) whose support is contained in \(\domain_{\level+1}\), i.e.,
\begin{equation*}\label{eq:bll-definition}
	\Bll \coloneqq \left\{\bsp \in \bspbasis^{\boldvec{0}}_{\level}:
	\supp(\bsp) \subseteq \domain_{\level+1}\right\}\;.
\end{equation*}
Second, we will simplify the notation of B-spline basis functions, and write
\(\xbsp_{\boldvec{i}} \coloneqq \bsp^{\boldvec{0}}_{\boldvec{i}, \level} \), since all
arguments in the proofs rely only on B-splines in the first space of the de Rham complex and
two consecutive hierarchical levels.

Now, we recall the definitions of chains and shortest chains from \cite{Shepherd2024}, and
we introduce some particular cases of shortest chains.

\begin{defn}[Chain]\label{def:chain}
	Let \(\xbsp_{\boldvec{i}}, \xbsp_{\boldvec{j}} \in \Bll\).
	We say that there is a chain between \(\xbsp_{\boldvec{i}}\) and \(\xbsp_{\boldvec{j}}\)
	if there exists \(r \in \mathbb{Z}_{\geq 0}\) and a sequence of B-splines \(\xbsp_{\boldvec{t}_l} \in
	\Bll,~l = \{0,\dots,r\}\), such that \(\xbsp_{\boldvec{i}} = \xbsp_{\boldvec{t}_0},
	\xbsp_{\boldvec{j}} = \xbsp_{\boldvec{t}_r}\) and \(|\boldvec{t}_{l} - \boldvec{t}_{l-1}| = 1\)
	for all \(l \in \{1,\dots,r\}\).
	We will denote this chain by \(\chain{\boldvec{i}}{\boldvec{j}} :=
	\{\xbsp_{\boldvec{t}_l} \in \Bll~:~l = 0,\dots,r\}\), and we will call \(r\) the
	length of the chain.
\end{defn}
\begin{defn}[Shortest chain]\label{def:shortest-chain}
	Let \(\xbsp_{\boldvec{i}}, \xbsp_{\boldvec{j}} \in \Bll\). A chain between
	\(\xbsp_{\boldvec{i}}\) and \(\xbsp_{\boldvec{j}}\), as in \Cref{def:chain}, is said to
	be a shortest chain if we have \(r= \sum_k \abs{j_k - i_k}\).
\end{defn}

\begin{defn}[Direction-\(k\) chain]\label{def:k-chain}
	Let \(\xbsp_{ \boldvec{i} }, \xbsp_{\boldvec{j}} \in \Bll\) and \(k \in \{1,2\}\) be
	such that \(i_{k'} = j_{k'}\) for \(k' \not = k\).
	We say that there is a direction-\(k\) chain between \(\xbsp_{\boldvec{i}}\) and
	\(\xbsp_{\boldvec{j}}\) if \(\xbsp_{\boldvec{r}} \in \Bll\) for every \(\boldvec{r} \in
	\mathbb{N}^2\) such that \[
		\min\{i_k,j_k\} < r_k < \max\{i_k,j_k\}
		~\wedge~
		i_{k'} = r_{k'} = j_{k'} \text{ for } k'  \neq k.
	\]
\end{defn}

\begin{defn}[\lchain]\label{def:L-chain}
	Let \(\xbsp_{\boldvec{i}}, \xbsp_{\boldvec{j}} \in \Bll \).
	We say that there exists an \lchain between them if for some \(\xbsp_{\boldvec{c}} \in
	\Bll\) there exists a direction-\(k_1\) chain, \(\chain{\boldvec{i}}{\boldvec{c}}\),
	between \(\xbsp_{\boldvec{i}}\) and \(\xbsp_{\boldvec{c}}\) and a direction-\(k_2\)
	chain, \(\chain{\boldvec{c}}{\boldvec{j}}\), between \(\xbsp_{\boldvec{c}}\) and
	\(\xbsp_{\boldvec{j}}\), such that \(k_1 \neq k_2\).
	The union of the chains \(\chain{\boldvec{i}}{\boldvec{c}}\) and
	\(\chain{\boldvec{c}}{\boldvec{j}}\) is called the \lchain between
	\(\xbsp_{\boldvec{i}}\) and \(\xbsp_{\boldvec{j}}\), and \(\xbsp_{\boldvec{c}}\) is
	called its corner element.
\end{defn}

The previous definitions are crucial to the main contribution of this paper,
\Cref{alg:exact-mesh}, and in particular \lchains will be our main tool for turning inexact
complexes exact through additional refinement. As it was proved in
\cite{Shepherd2024}, the lack of exactness is related to problematic pairs (see
\Cref{def:problematic}): refined functions that are close to each other, but not connected
through a shortest chain of refined functions. Connecting problematic pairs through an
L-chain fixes the pair, and limits the amount of new potentially problematic pairs.

The next results show that the previous two types of chains are shortest chains.

\begin{lem}\label{lem:k-chain-shortest}
	Let \(\xbsp_{\boldvec{i}}, \xbsp_{\boldvec{j}} \in \Bll \) have a direction-\(k\) chain
	\(\chain{\boldvec{i}}{\boldvec{j}}\) between them.
	Then, \(\chain{\boldvec{i}}{\boldvec{j}}\) is a shortest chain.
\end{lem}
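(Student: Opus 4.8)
The plan is to exhibit the direction-$k$ chain explicitly as a chain in the sense of \Cref{def:chain}, compute its length, and check that this length equals the $1$-norm distance between the endpoints demanded by \Cref{def:shortest-chain}.

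First I would record the defining feature of a direction-$k$ chain given by \Cref{def:k-chain}: the two endpoints agree in the complementary coordinate, $i_{k'} = j_{k'}$ for the index $k' \neq k$. As a consequence, the quantity appearing in the definition of a shortest chain collapses to a single term, $\abs{j_1 - i_1} + \abs{j_2 - i_2} = \abs{j_k - i_k}$.

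Next I would construct the sequence of multi-indices underlying the direction-$k$ chain. Assuming without loss of generality that $i_k \leq j_k$, set $r \coloneqq j_k - i_k$ and define $\boldvec{t}_l$, for $l = 0, \dots, r$, by letting its $k$-th coordinate equal $i_k + l$ and its remaining coordinate equal the common value $i_{k'} = j_{k'}$. Then $\boldvec{t}_0 = \boldvec{i}$ and $\boldvec{t}_r = \boldvec{j}$, and two consecutive multi-indices differ by exactly one unit in a single coordinate, so $\abs{\boldvec{t}_l - \boldvec{t}_{l-1}} = 1$ for every $l \in \{1, \dots, r\}$. The endpoints $\xbsp_{\boldvec{i}}, \xbsp_{\boldvec{j}}$ lie in $\Bll$ by hypothesis, while each intermediate $\xbsp_{\boldvec{t}_l}$ (with $\min\{i_k, j_k\} < t_{l,k} < \max\{i_k, j_k\}$) lies in $\Bll$ precisely by the defining condition of \Cref{def:k-chain}. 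This confirms that the sequence is a legitimate chain of length $r = \abs{j_k - i_k}$.

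Finally I would compare the two computed quantities: the constructed chain has length $r = \abs{j_k - i_k}$, which matches $\abs{j_1 - i_1} + \abs{j_2 - i_2}$ from the first step, so by \Cref{def:shortest-chain} the chain is a shortest chain. I expect no genuine obstacle in this argument; the only points requiring mild care are making explicit the identification of the direction-$k$ chain with this monotone traversal, and noting that the symmetric case $i_k > j_k$ follows by the identical construction with the roles of $\boldvec{i}$ and $\boldvec{j}$ exchanged.
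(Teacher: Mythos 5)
Your proof is correct and follows essentially the same route as the paper's: both identify the direction-\(k\) chain with the monotone traversal \(\xbsp_{\boldvec{i}}, \xbsp_{\boldvec{i}+\boldvec{\delta}_k}, \dots, \xbsp_{\boldvec{j}}\), note that membership of the intermediate functions in \(\Bll\) is exactly the defining condition of a direction-\(k\) chain, and compute the length as \(\abs{j_k - i_k} = \abs{\boldvec{j}-\boldvec{i}}\). Your write-up merely makes explicit the details the paper leaves as ``clear.''
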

\begin{proof}
	Let \(\boldsymbol{\Delta} = \boldvec{j} -\boldvec{i}\) and assume \wlg that
	\(\Delta_k\geq0\). Then, the direction-\(k\) chain \(\chain{\boldvec{i}}{\boldvec{j}}\) is
	equal to
	\(
	\{
	\xbsp_{\boldvec{i}},
	\xbsp_{\boldvec{i}+\boldvec{\delta}_k},\xbsp_{\boldvec{i}+2\boldvec{\delta}_k},\dots,
	\xbsp_{\boldvec{j}}
	\}.
	\)
	It is clear that this chain satisfies the conditions for being a shortest chain, since
	its length is \(\abs{\boldsymbol{\Delta}} = j_k - i_k\).
\end{proof}

\begin{lem}\label{lem:L-chain-shortest}
	Let \(\xbsp_{\boldvec{i}}, \xbsp_{\boldvec{j}} \in \Bll \). Then, any \lchain between
	them is a shortest chain.
\end{lem}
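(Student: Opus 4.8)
The plan is to exploit the rigidity of the two direction chains constituting an \lchain. Once the corner element is pinned down explicitly, the total length is the sum of the lengths of the two sub-chains, and each of those is already a shortest chain by \Cref{lem:k-chain-shortest}. Since we work in two dimensions and the definition requires \(k_1 \neq k_2\), I may assume \wlg that \(k_1 = 1\) and \(k_2 = 2\).

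First I would determine the corner \(\xbsp_{\boldvec{c}}\) from the constraints in \Cref{def:k-chain}. A direction-\(1\) chain \(\chain{\boldvec{i}}{\boldvec{c}}\) leaves the second coordinate unchanged, forcing \(c_2 = i_2\); similarly the direction-\(2\) chain \(\chain{\boldvec{c}}{\boldvec{j}}\) forces \(c_1 = j_1\). Hence \(\boldvec{c} = (j_1, i_2)\) is completely determined. Applying \Cref{lem:k-chain-shortest} to each piece, the direction-\(1\) chain from \(\boldvec{i}\) to \(\boldvec{c}\) has length \(\abs{j_1 - i_1}\) and the direction-\(2\) chain from \(\boldvec{c}\) to \(\boldvec{j}\) has length \(\abs{j_2 - i_2}\).

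It remains to check that joining the two sub-chains across the shared corner yields a genuine chain of additive length. Every index in \(\chain{\boldvec{i}}{\boldvec{c}}\) has second coordinate \(i_2\), while every index in \(\chain{\boldvec{c}}{\boldvec{j}}\) has first coordinate \(j_1\), so any common index must equal \((j_1,i_2) = \boldvec{c}\); the two pieces thus meet only at the corner. The union therefore contains \(\abs{j_1-i_1} + \abs{j_2-i_2} + 1\) distinct B-splines, and every consecutive pair, including the ones adjacent to the corner, differs by unit \(1\)-norm, so by \Cref{def:chain} it is a chain of length \(\abs{j_1-i_1} + \abs{j_2-i_2} = \sum_k \abs{j_k - i_k}\). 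By \Cref{def:shortest-chain} this is exactly the shortest-chain condition, with the degenerate cases \(i_1 = j_1\) or \(i_2 = j_2\) (where one sub-chain collapses to its single endpoint and the \lchain reduces to a single direction chain) covered by the same identity.

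The argument is essentially combinatorial bookkeeping once the corner \(\boldvec{c} = (j_1,i_2)\) is identified, so I do not anticipate a genuine obstacle. The one point that needs care is confirming that the two direction chains intersect solely at \(\boldvec{c}\): this is precisely what rules out double-counting a B-spline and guarantees the length is truly additive rather than strictly smaller than \(\sum_k \abs{j_k - i_k}\).
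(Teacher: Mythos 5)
Your proof is correct and takes essentially the same approach as the paper's: decompose the \lchain into its direction-$1$ and direction-$2$ constituents and observe that the lengths add up to $\sum_k \abs{j_k - i_k}$. The paper's version is terser --- after a \wlg normalization it simply asserts this additivity --- whereas you additionally pin down the corner as $\boldvec{c}=(j_1,i_2)$ and verify that the two sub-chains meet only there, details the paper leaves implicit.
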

\begin{proof}
	Assume without loss of generality that \(j_k \geq i_k\), for \(k = 1,2\). Since the
	\lchain is the union of a direction-\(1\) and direction-\(2\) chain, it is easy to see
	that its length is \((j_1 - i_1) + (j_2 - i_2) = \abs{\boldvec{j} - \boldvec{i}}\), and
	hence it is a shortest chain.
\end{proof}

The following are two definitions and the main result from \cite{Shepherd2024}, that gives a
sufficient condition for exactness.
\begin{defn}\label{def:nlintersec}
	Let \( \xbsp_{\boldvec{i}}, \xbsp_{\boldvec{j}} \in \Bll \).
	We say that \(\xbsp_{\boldvec{i}}\) and \(\xbsp_{\boldvec{j}}\) share a \nlintersec if
	there exists some \(k_0 \in\{1, 2\}\) and
	\(\{\knot_{t_1,\level+1,1},\knot_{t_2,\level+1,2}\} \in
	\knotvec_{\level+1,1} \times \knotvec_{\level+1,2}\) such that
	\begin{align*}
		\closedsupp{(\xbsp_{\boldvec{i}})} \cap \closedsupp{(\xbsp_{\boldvec{j}})} & \supseteq
		\bigtimes_{k=1}^2 I_k\;,                                                               \\
		I_k                                                                        & \coloneqq
		\begin{cases*}
			(\knot_{t_k,\level+1,k}, \knot_{t_k+p_{(\level+1, k)},\level+1,k})\;,
			                                         & \( k\neq k_0\)\;, \\
			\left\{\knot_{t_k,\level+1,k}\right\}\;, & \( k = k_0\)\;.
		\end{cases*}
	\end{align*}
\end{defn}

\begin{defn}[Problematic pairs and chains] \label{def:problematic}
	Let \( \xbsp_{\boldvec{i}}, \xbsp_{\boldvec{j}} \in \Bll \) share a \nlintersec. We say
	that the pair is problematic if there is no shortest chain between them.
	We also say that a chain is problematic with \( \xbsp_{\boldvec{i}}
	\) if any spline in the chain is problematic with \( \xbsp_{\boldvec{i}} \).
\end{defn}

In \Cref{fig:problematic-pair-illustration} we illustrate the definition of problematic
pairs by showing pairs of basis functions in three different scenarios.

\begin{figure}[ht]
	\centering
	\hfill
	\begin{subfigure}[t]{0.32\textwidth}
		\centering
		% [inline block 0: 3 envs, 108250 chars -> data_tex | \begin{tikzpicture}[xscale=3.5, yscale=3.1] \pgfdeclarelayer{background}...]

 		\caption{}
		\label{fig:problematic-pair-algorithm-case-3-illustration}
	\end{subfigure}
	\hfill
	\strut
	\caption{
		Illustration of three different scenarios, all using \(p_{(\level, k)} = 2\) for all
		\(\level\) and \(k\). In the left figure there is no problematic pair, in the centre
		figure there is a problematic pair and in the last figure there is a \nlintersec and
		a shortest chain, so the pair is not problematic. The supports of
		\(\xbsp_{\boldvec{i}}\) and \(\xbsp_{\boldvec{j}}\) are coloured in grey, with their
		intersection in a darker shade. Also, the possible \(I_k\) contained in the
		intersection of the supports of \(\xbsp_{\boldvec{i}}\) and \(\xbsp_{\boldvec{j}}\),
		as in \Cref{def:nlintersec}, are highlighted in bold, black lines. Finally, filled
		dots represent the indices \(\boldvec{i}\) and \(\boldvec{j}\) and hollow dots the
		indices of the other B-splines in \(\Bll\).
	}
	\label{fig:problematic-pair-illustration}
\end{figure}

\begin{thm}[\cite{Shepherd2024}, Theorem 4.23]\label{thm:exact-complex}
	Let us assume that, for every \( \xbsp_{\boldvec{i}}, \xbsp_{\boldvec{j}} \in \Bll \)
	sharing a \nlintersec, there exists a shortest chain between them, for every \(\level =
	0, \ldots, L\). Then the hierarchical complex \eqref{eq:hierarchical-complex} is exact
	for \(\level = 0, \ldots, L\).
\end{thm}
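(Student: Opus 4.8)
The plan is to prove exactness by induction on the refinement level $\level$, comparing the complex built from levels $0,\dots,\level$ with the one built from levels $0,\dots,\level+1$. The base case $\level=0$ is the single-level tensor-product spline complex $\bspspace^0_0\xrightarrow{\grad}\bspspace^1_0\xrightarrow{\curl}\bspspace^2_0$, whose exactness over the square is classical: it is the tensor product of two univariate complexes $\bspspace^0\to\bspspace^1$, each of which has the expected cohomology once the homogeneous boundary conditions are built into the knot vectors, so a K\"unneth-type argument delivers the correct (trivial interior) cohomology, as in \cite{Buffa2010,Buffa2011}. For the inductive step I would show that passing from level $\level$ to level $\level+1$ — which removes the coarse functions whose support lies in $\domain_{\level+1}$ and inserts the finer functions supported there — does not change the cohomology, provided the chain hypothesis holds.

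To localize the comparison I would fit the two complexes into a short exact sequence of cochain complexes (a Mayer--Vietoris--type diagram) and pass to the associated long exact sequence in cohomology. The functions common to both levels — coarse functions whose support is \emph{not} contained in $\domain_{\level+1}$, together with their co-face relatives in $\hbspace^1$ and $\hbspace^2$ — form a shared sub-complex, and the modification is entirely supported, with vanishing trace, on $\domain_{\level+1}$ (the inserted finer functions and the removed $\Bll$ functions all have support inside $\domain_{\level+1}$, and by \Cref{lem:hb-supp} so do their co-faces). On that region the inserted finer functions span a tensor-product-type complex whose cohomology is governed by the single-level result, so the only possible source of a discrepancy is the \emph{local coarse complex} generated by $\Bll$ and its associated co-face B-splines. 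Exactness at level $\level+1$ thus reduces to showing that this local coarse complex reproduces the topology of $\domain_{\level+1}$ faithfully — equivalently, that its first cohomology is no larger than that of the finer local complex.

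The combinatorial heart of the argument is to read off this first cohomology directly from the index set of $\Bll$. A nontrivial class corresponds to a pair $\xbsp_{\boldvec{i}},\xbsp_{\boldvec{j}}\in\Bll$ whose supports meet in the degenerate manner recorded by \Cref{def:nlintersec}: a \nlintersec is precisely the configuration that generates a $1$-cocycle of the local coarse complex. I would then show that a shortest chain between $\xbsp_{\boldvec{i}}$ and $\xbsp_{\boldvec{j}}$ supplies a primitive for that cocycle — intuitively, the monotone lattice path of removed coarse functions furnished by the chain lets one fill in the cycle, so the class becomes a coboundary and vanishes. The facts that direction-$k$ chains and \lchains are shortest chains (\Cref{lem:k-chain-shortest,lem:L-chain-shortest}) guarantee such primitives are available whenever the hypothesis is met, while the absence of any shortest chain would leave the class unkilled, which is exactly the mechanism producing spurious harmonic fields.

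The step I expect to be the main obstacle is the local cohomology comparison on $\domain_{\level+1}$, because $\domain_{\level+1}$ need not be a single box and may itself be topologically nontrivial. One must argue carefully that the finer inserted complex, with homogeneous trace on $\partial\domain_{\level+1}$, carries exactly the cohomology dictated by the topology of $\domain_{\level+1}$ — here \Cref{assum:refinement-domains} is essential, since it controls how $\domain_{\level+1}$ is assembled from coarse supports — and then match this against the coarse local complex. Turning the correspondence ``cohomology class $\leftrightarrow$ \nlintersec without a shortest chain'' into a precise isomorphism, rather than a heuristic, is the delicate point, and is precisely where the definitions of minimal intersection, problematic pair, and shortest chain must be used in concert.
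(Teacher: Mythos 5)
First, a structural point: the paper you are being compared against does \emph{not} prove \Cref{thm:exact-complex} at all --- it is imported verbatim from \cite{Shepherd2024} (Theorem 4.23) and used as a black box; the paper's own contributions (the \lchain refinement algorithm and the admissibility result) sit on top of it. So there is no internal proof to match your attempt against, and your proposal has to be judged on whether it would constitute a proof on its own.

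Judged that way, the skeleton is reasonable but the proof is missing at its core. The framing is legitimate: the hierarchical spaces are indeed nested across construction steps ($\hbspace^{j}_{\level}\subseteq\hbspace^{j}_{\level+1}$, since every removed coarse function in $\Bll$ has all of its level-$(\level+1)$ children supported in $\domain_{\level+1}$ and hence added), so induction over levels with a long exact sequence for the quotient, localized to $\domain_{\level+1}$, is consistent with how this family of results is attacked in \cite{Evans2018} and \cite{Shepherd2024}; your use of \Cref{lem:hb-supp} to confine the co-face functions to $\domain_{\level+1}$ is also correct. The gap is that the two steps you explicitly defer as ``delicate'' are not technical residue --- they \emph{are} the theorem. (a) The asserted dictionary, that a \nlintersec ``is precisely the configuration that generates a $1$-cocycle of the local coarse complex'' and that a shortest chain ``supplies a primitive,'' is the entire combinatorial content of Theorem 4.23 of \cite{Shepherd2024}; stated as intuition, your argument reduces to a restatement of the goal in cohomological language, with no chain-level construction showing that \emph{every} potentially nontrivial class is indexed by such a pair and is killed whenever a shortest chain of refined functions exists. (b) The local comparison is genuinely nontrivial because $\domain_{\level+1}$ can be disconnected and multiply connected, and the coarse local complex spanned by $\Bll$ and its co-faces (with the boundary behaviour induced by truncation at $\partial\domain_{\level+1}$) does not automatically compute the cohomology of $\domain_{\level+1}$ --- that failure is exactly the phenomenon of spurious harmonic forms, so it cannot be waved through. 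Without explicit constructions for (a) and (b), the proposal is a plausible strategy outline, not a proof.
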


\begin{rem}
	We note here that \cite[Assumption 3]{Shepherd2024} is more restrictive due to the
	higher-dimensional setting. In the conditions of this paper, pair-wise checks are enough
	to ensure exactness of the de Rham complex.
\end{rem}

To fix problematic pairs it is necessary to join them by a shortest chain of refined
B-splines. However, additional refinement can lead to new problematic pairs that need to be
checked and resolved recursively. The following results show that \lchains reduce the number
of checks to be performed during this process.
\begin{defn}\label{def:resolved}
	Let \(\xbsp_{\boldvec{i}} \in \Bll\) and \(k \in \{1,2\}\). We define the \(k\)-side configuration of
	\(\xbsp_{\boldvec{i}}\) as \[
		\sideconfig_{\boldvec{i}, k} \coloneqq \left\{
		\xbsp_{\boldvec{t}} \in \bspbasis^{\boldvec{0}}_{\level} :
		\, \boldvec{t} - \boldvec{i} = \pm \boldvec{\delta}_k,
		\right\}\;.
	\]
	Moreover, we say that \(\xbsp_{\boldvec{i}} \in \Bll\) is resolved in direction \(k\) if
	\( \sideconfig_{\boldvec{i}, k} \subseteq \Bll\).
\end{defn}

\begin{lem}[Problematic sides]\label{lem:problematic-sides}
	Let \(\xbsp_{\boldvec{i}},\xbsp_{\boldvec{j}} \in \Bll \) be a problematic pair, and
	\(\xbsp_{\boldvec{i}}\) be resolved in direction \(k \in \{1,2\}\). Then, there must be
	some \(\xbsp_{\boldvec{t}} \in \sideconfig_{\boldvec{i}, k}\) that is problematic with
	\(\xbsp_{\boldvec{j}}\), and \(\abs{t_k - j_k} < \abs{i_k - j_k}\).
\end{lem}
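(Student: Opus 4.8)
\emph{The plan is as follows.} Assume without loss of generality that $j_k > i_k$; the case $j_k < i_k$ is symmetric. The degenerate situation $i_k = j_k$ is excluded, since the conclusion then demands a neighbour strictly closer in coordinate $k$, which does not exist; this case does not arise in our use of the lemma (in particular it never occurs when $k$ is the ``thin'' direction of the shared \nlintersec, for which necessarily $i_k \neq j_k$). The natural candidate is the neighbour of $\xbsp_{\boldvec{i}}$ stepping towards $\xbsp_{\boldvec{j}}$, namely $\xbsp_{\boldvec{t}}$ with $\boldvec{t} = \boldvec{i} + \boldvec{\delta}_k \in \sideconfig_{\boldvec{i},k}$. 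Since $\xbsp_{\boldvec{i}}$ is resolved in direction $k$, we have $\xbsp_{\boldvec{t}} \in \Bll$, and from $i_k < t_k = i_k + 1 \leq j_k$ I immediately obtain $\abs{t_k - j_k} = \abs{i_k - j_k} - 1 < \abs{i_k - j_k}$. It then remains to prove that $\xbsp_{\boldvec{t}}$ is problematic with $\xbsp_{\boldvec{j}}$, i.e.\ that the two share a \nlintersec but admit no shortest chain.

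First I would dispatch the absence of a shortest chain by a concatenation argument. Suppose, for contradiction, that a shortest chain $\chain{\boldvec{t}}{\boldvec{j}}$ exists; by \Cref{def:shortest-chain} its length equals $\sum_m \abs{t_m - j_m} = \left(\sum_m \abs{i_m - j_m}\right) - 1$, since $t_k$ is one step closer to $j_k$ while $t_{k'} = i_{k'}$ for $k' \neq k$. Because $\xbsp_{\boldvec{i}} \in \Bll$ and $\abs{\boldvec{t} - \boldvec{i}} = 1$, prepending $\xbsp_{\boldvec{i}}$ to $\chain{\boldvec{t}}{\boldvec{j}}$ yields a chain between $\xbsp_{\boldvec{i}}$ and $\xbsp_{\boldvec{j}}$ of length $\sum_m \abs{i_m - j_m}$, i.e.\ a shortest chain between the original pair. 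This contradicts the assumption that $\xbsp_{\boldvec{i}}, \xbsp_{\boldvec{j}}$ is problematic, so no shortest chain between $\xbsp_{\boldvec{t}}$ and $\xbsp_{\boldvec{j}}$ can exist.

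The remaining and most delicate step is to show that $\xbsp_{\boldvec{t}}$ and $\xbsp_{\boldvec{j}}$ still share a \nlintersec. Let $k_0$ be the direction in which the original intersection reduces to a single knot $\{\knot_{t_{k_0},\level+1,k_0}\}$, and let $I_{k_1}$ be the level-$(\level+1)$ support witnessing the intersection in the other direction $k_1$, as in \Cref{def:nlintersec}. The key observation, using $\supp(\bsp^{\boldvec{0}}_{\boldvec{i},\level}) = (\knot_{i_1},\knot_{i_1+p+1}) \times (\knot_{i_2},\knot_{i_2+p+1})$, is a monotonicity property: stepping one index towards $\xbsp_{\boldvec{j}}$ in direction $k$ (with $t_k \leq j_k$, hence without overshooting) cannot shrink the $k$-projection of $\closedsupp(\xbsp_{\boldvec{i}}) \cap \closedsupp(\xbsp_{\boldvec{j}})$, while leaving the projection in the direction $k'\neq k$ unchanged. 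I would then argue by cases: if $k = k_1$, the single-knot projection in direction $k_0$ is untouched and the overlap in direction $k_1$ only grows, so it still contains $I_{k_1}$; if $k = k_0$, the overlap in direction $k_1$ is untouched and still contains $I_{k_1}$, while the enlarged overlap in direction $k_0$ still contains $\knot_{t_{k_0},\level+1,k_0}$. In either case the same witnesses $(k_0,\knot_{t_{k_0}},I_{k_1})$ certify a \nlintersec between $\xbsp_{\boldvec{t}}$ and $\xbsp_{\boldvec{j}}$.

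The hard part will be precisely this last geometric bookkeeping: one must verify that the witnessing knot and interval of the original intersection genuinely remain inside the shifted overlap in both cases, and that the ``no overshoot'' condition $t_k \leq j_k$ indeed rules out any shrinking of the overlap. Once the monotonicity of the overlap is read off cleanly from the explicit support formula, the two cases close at once, and combining this with the concatenation argument and the elementary index computation completes the proof.
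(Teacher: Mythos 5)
Your proposal is correct and takes essentially the same route as the paper's own proof: choose the neighbour \(\xbsp_{\boldvec{t}}\) of \(\xbsp_{\boldvec{i}}\) one index closer to \(\xbsp_{\boldvec{j}}\) (available in \(\Bll\) by resolvedness), note that the support overlap with \(\xbsp_{\boldvec{j}}\) can only grow so the \nlintersec persists, and exclude a shortest chain by prepending \(\xbsp_{\boldvec{i}}\) to any putative shortest chain \(\chain{\boldvec{t}}{\boldvec{j}}\). The paper compresses your two-case geometric bookkeeping into the single containment \(\closedsupp{(\xbsp_{\boldvec{i}})} \cap \closedsupp{(\xbsp_{\boldvec{j}})} \subseteq \closedsupp{(\xbsp_{\boldvec{t}})} \cap \closedsupp{(\xbsp_{\boldvec{j}})}\), and, like you, it implicitly relies on the fact that a problematic pair satisfies \(i_k \neq j_k\) in both directions (which follows since otherwise a direction-\(k'\) chain through \(\Bll\) would exist), so your degenerate case genuinely cannot occur under the hypotheses.
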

\begin{proof}
	Since \(\xbsp_{\boldvec{i}}\) is resolved in direction \(k\), by definition there exists
	\(\xbsp_{\boldvec{t}}\in \sideconfig_{\boldvec{i},k}\) such that \(\abs{t_k - j_k} <
	\abs{i_k - j_k}\). This implies that \(\closedsupp{(\xbsp_{\boldvec{i}})} \cap
	\closedsupp{(\xbsp_{\boldvec{j}})} \subseteq \closedsupp{(\xbsp_{\boldvec{t}})} \cap
	\closedsupp{(\xbsp_{\boldvec{j}})}\), which in turn means that there is a \nlintersec
	between \(\xbsp_{\boldvec{t}}\) and \(\xbsp_{\boldvec{j}}\). Since a shortest chain
	between \(\xbsp_{\boldvec{t}}\) and \(\xbsp_{\boldvec{j}}\) would result in a shortest
	chain between \(\xbsp_{\boldvec{i}}\) and \(\xbsp_{\boldvec{j}}\), and hence a
	contradiction, the pair \(\xbsp_{\boldvec{t}}, \xbsp_{\boldvec{j}}\) must be
	problematic.
\end{proof}

\begin{lem}[Problematic chain in 2D] \label{lem:problematic-chain}
	Let \(\xbsp_{\boldvec{i}},\xbsp_{\boldvec{j}}, \xbsp_{\boldvec{l}}  \in \Bll\) be such
	that there exists an \lchain \( \chain{\boldvec{i}}{\boldvec{j}} \) between \(
	\xbsp_{\boldvec{i}} \) and \(\xbsp_{\boldvec{j}}\), with corner element \(
	\xbsp_{\boldvec{c}}\). If \(\xbsp_{\boldvec{l}}\) is problematic with
	\(\chain{\boldvec{i}}{\boldvec{j}}\), then it must be problematic with at least one of
	\(\xbsp_{\boldvec{i}}, \xbsp_{\boldvec{j}} \) and \(\xbsp_{\boldvec{c}}\).
\end{lem}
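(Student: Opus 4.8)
The plan is to reduce the statement to repeated application of \Cref{lem:problematic-sides}, pushing a problematic chain element toward one of the three distinguished vertices of the \lchain. First I would fix coordinates: after relabelling and reflecting the indices, assume the direction-\(1\) part of \(\chain{\boldvec{i}}{\boldvec{j}}\) runs from \(\boldvec{i}=(i_1,i_2)\) to the corner \(\boldvec{c}=(j_1,i_2)\) with \(i_1\le j_1\), and the direction-\(2\) part runs from \(\boldvec{c}\) to \(\boldvec{j}=(j_1,j_2)\) with \(i_2\le j_2\). By hypothesis some \(\xbsp_{\boldvec{t}}\) in the \lchain is problematic with \(\xbsp_{\boldvec{l}}\); it lies on one of the two straight segments, and by symmetry I treat the horizontal one, so \(\boldvec{t}=(t_1,i_2)\) with \(i_1\le t_1\le j_1\).

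Next I would observe that every \emph{interior} point \((r_1,i_2)\), \(i_1<r_1<j_1\), of this segment is resolved in direction \(1\): both side neighbours \((r_1\pm 1,i_2)\) again lie on the segment, hence in \(\Bll\), so \(\sideconfig_{(r_1,i_2),1}\subseteq\Bll\). Therefore, as long as \(\xbsp_{\boldvec{t}}\) is interior and \(t_1\neq l_1\), \Cref{lem:problematic-sides} applies with pivot \(\boldvec{t}\) and partner \(\boldvec{l}\), and produces a neighbour \(\boldvec{t}'=\boldvec{t}\pm\boldvec{\delta}_1\), still on the segment, that is problematic with \(\xbsp_{\boldvec{l}}\) and satisfies \(|t'_1-l_1|<|t_1-l_1|\). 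Iterating, I obtain a sequence of segment points, each problematic with \(\xbsp_{\boldvec{l}}\), whose first coordinate moves monotonically toward \(l_1\) by unit steps, so the process must terminate. It can only stop at a segment endpoint (namely \(\boldvec{i}\) or \(\boldvec{c}\), which need not be resolved) or at an interior point aligned with \(\boldvec{l}\) in the first coordinate, i.e.\ \(t_1=l_1\).

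The heart of the argument, and the step I expect to be the main obstacle, is ruling out the second terminal case: a problematic \emph{interior aligned} point \(\boldvec{t}^\ast=(l_1,i_2)\), which differs from \(\boldvec{l}=(l_1,l_2)\) only in the second index. I would dispatch it with a short auxiliary fact: \emph{two elements of \(\Bll\) that differ in a single index coordinate and share a \nlintersec can never be a problematic pair.} Indeed, sharing a \nlintersec forces their supports to intersect, hence to overlap or touch in the differing direction; consequently, for every intermediate index \(\boldvec{r}\) one has \(\supp(\xbsp_{\boldvec{r}})\subseteq\closedsupp(\xbsp_{\boldvec{t}^\ast})\cup\closedsupp(\xbsp_{\boldvec{l}})\subseteq\domain_{\level+1}\), the last inclusion holding because \(\boldvec{t}^\ast,\boldvec{l}\in\Bll\) and \(\domain_{\level+1}\) is closed. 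Thus every such \(\boldvec{r}\) is in \(\Bll\), so the whole straight direction-\(2\) segment joining \(\boldvec{t}^\ast\) to \(\boldvec{l}\) lies in \(\Bll\); since its length equals \(|l_2-i_2|\), it is a shortest chain by \Cref{def:shortest-chain}, and the pair is not problematic. As the pushed points remain problematic by construction, this terminal case is impossible, and the horizontal push necessarily ends at \(\boldvec{i}\) or \(\boldvec{c}\).

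Finally I would note that the vertical segment is handled identically, pushing in direction \(2\) toward \(l_2\) and terminating at \(\boldvec{c}\) or \(\boldvec{j}\), with the corner \(\boldvec{c}\) shared by both segments. Hence in every case the terminal point is one of \(\xbsp_{\boldvec{i}},\xbsp_{\boldvec{c}},\xbsp_{\boldvec{j}}\) and is problematic with \(\xbsp_{\boldvec{l}}\), which is the claim. The only delicate points to get right are the orientation bookkeeping for the \lchain and the support-nesting inclusion in the auxiliary fact.
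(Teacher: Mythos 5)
Your proposal is correct and takes essentially the same route as the paper's own proof: start from a non-distinguished chain element, note that it is resolved in its segment's direction, and recursively apply \Cref{lem:problematic-sides} to push the problematic partnership toward one of \(\xbsp_{\boldvec{i}}\), \(\xbsp_{\boldvec{j}}\), \(\xbsp_{\boldvec{c}}\). You are in fact more careful than the paper, whose terse proof never addresses what happens if the recursion reaches an element aligned with \(\xbsp_{\boldvec{l}}\) before hitting a distinguished vertex; your auxiliary fact excluding aligned problematic pairs (proved by the same support-union argument the paper uses to remove ``kinks'' in \Cref{lem:int-box-chain}) closes exactly that gap.
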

\begin{proof}
	Suppose \( \xbsp_{\boldvec{l}} \) is problematic with some \(
	\xbsp_{\boldvec{t}} \in \chain{\boldvec{i}}{\boldvec{j}}\). If \( \xbsp_{\boldvec{t}}
	\in \{\xbsp_{\boldvec{i}}, \xbsp_{\boldvec{j}}, \xbsp_{\boldvec{c}}\} \) the proof is
	trivial, so, consider \( \xbsp_{\boldvec{t}} \in \chain{\boldvec{i}}{\boldvec{j}}
	\backslash \{\xbsp_{\boldvec{i}}, \xbsp_{\boldvec{j}}, \xbsp_{\boldvec{c}}\}\).
	Then, since \(\xbsp_{\boldvec{t}}\) will be resolved in some direction \(k\), we can
	recursively apply \Cref{lem:problematic-sides} to prove that there exists
	\(\xbsp_{\boldvec{v}} \in \{\xbsp_{\boldvec{i}}, \xbsp_{\boldvec{j}},
	\xbsp_{\boldvec{c}}\} \) that is problematic with \(\xbsp_{\boldvec{l}}\).
\end{proof}

We now introduce the concept of an interaction box, and some related results that will help
to simplify the search for shortest chains in the algorithms of \Cref{sec:algorithms}.

\begin{defn}\label{def:int-box}
	Let \(\xbsp_{\boldvec{i}} \in \Bll\). We define the interaction box
	\(\interbox{\boldvec{i}}\) of \(\xbsp_{\boldvec{i}}\) as the set of
	B-splines in \(\Bll\) that are at a maximum distance of \(\degree_{(\ell, k)}+1\) knot
	indices, in each dimension. In other words, \[
		\interbox{\boldvec{i}} \coloneqq \left\{
		\xbsp_{\boldvec{j}} \in \Bll : \abs{i_k - j_k} \leq \degree_{(\ell, k)} + 1, \text{ for } k = 1,2
		\right\}.
	\]
\end{defn}

To refer to the intersection of two interaction boxes \(\interbox{\boldvec{i}}\)
and \(\interbox{\boldvec{j}}\) we will write \(\interbox{\boldvec{i}}{\boldvec{j}}\coloneqq
\interbox{\boldvec{i}}\cap\interbox{\boldvec{j}}\).

\begin{lem}\label{lem:int-box-chain}
	Let \(\xbsp_{\boldvec{i}},\xbsp_{\boldvec{j}} \in \Bll\). Any shortest chain between
	\(\xbsp_{\boldvec{i}}\) and \(\xbsp_{\boldvec{j}}\) is contained in
	\(\interbox{\boldvec{i}}{\boldvec{j}}\). Moreover, if
	\(\chain{\boldvec{i}}{\boldvec{j}}\) is a chain contained in
	\(\interbox{\boldvec{i}}{\boldvec{j}}\), then there exists a shortest chain between
	\(\xbsp_{\boldvec{i}}\) and \(\xbsp_{\boldvec{j}}\).
\end{lem}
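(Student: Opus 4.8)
The plan is to treat the two assertions separately, relying throughout on the combinatorial observation that a \emph{shortest} chain is exactly a monotone lattice path. I would first establish this monotonicity. In any chain each step $\boldvec{t}_l - \boldvec{t}_{l-1}$ has $|\boldvec{t}_l - \boldvec{t}_{l-1}| = 1$, so, the indices being integers, every step equals $\pm\boldvec{\delta}_k$ for some $k$. Writing $a_k,b_k$ for the numbers of $+\boldvec{\delta}_k$ and $-\boldvec{\delta}_k$ steps, the net displacement gives $a_k - b_k = j_k - i_k$ while the length is $\sum_k(a_k+b_k)$. Since $a_k+b_k \ge |a_k-b_k| = |j_k-i_k|$, a chain of length $\sum_k|j_k-i_k|$, i.e. a shortest chain in the sense of \Cref{def:shortest-chain}, forces $a_k+b_k = |j_k-i_k|$, hence $b_k=0$ when $j_k\ge i_k$ and $a_k=0$ otherwise. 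Thus each coordinate is monotone along the chain, so every vertex $\boldvec{t}$ satisfies $\min(i_k,j_k)\le t_k\le\max(i_k,j_k)$; in particular $|t_k-i_k|\le|j_k-i_k|$ and $|t_k-j_k|\le|j_k-i_k|$ for both $k$.

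For the first assertion it then remains to bound $|i_k-j_k|$ by $\degree_{(\level,k)}+1$, which is precisely the statement $\xbsp_{\boldvec{j}}\in\interbox{\boldvec{i}}$ and is the only case in which the claim is not vacuous. This holds in the situation of interest: when $\xbsp_{\boldvec{i}}$ and $\xbsp_{\boldvec{j}}$ share a \nlintersec their closed supports meet, and since a first-space B-spline spans $\degree_{(\level,k)}+1$ knot intervals in direction $k$, overlap of the closed supports forces $|i_k-j_k|\le\degree_{(\level,k)}+1$. Combining this with the displacement bounds above, each chain vertex $\boldvec{t}$ satisfies $|t_k-i_k|\le\degree_{(\level,k)}+1$ and $|t_k-j_k|\le\degree_{(\level,k)}+1$, i.e. $\boldvec{t}\in\interbox{\boldvec{i}}\cap\interbox{\boldvec{j}} = \interbox{\boldvec{i}}{\boldvec{j}}$, which proves the first claim.

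For the converse, suppose a (not necessarily shortest) chain $\chain{\boldvec{i}}{\boldvec{j}}$ lies in $\interbox{\boldvec{i}}{\boldvec{j}}$; after reflecting coordinates we may assume $i_k\le j_k$. The plan is to straighten this chain into a monotone one by induction on its excess length $r-\sum_k(j_k-i_k)\ge 0$. Backtracking pairs of opposite steps are deleted outright, while a vertex where the chain turns away from $\boldvec{j}$ is removed by a local rerouting across a unit square $\{\boldvec{a},\boldvec{a}+\boldvec{\delta}_1,\boldvec{a}+\boldvec{\delta}_2,\boldvec{a}+\boldvec{\delta}_1+\boldvec{\delta}_2\}$. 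The key structural input, which I would establish from \Cref{assum:refinement-domains}, is a local filling property of $\Bll$: because $\domain_{\level+1}$ is a union of level-$\level$ supports, and two first-space supports whose indices differ by at most $\degree_{(\level,k)}+1$ in each direction have overlapping (hence gap-free) union, membership in $\Bll$ of the needed corner of such a unit square follows from membership of the others. Iterating the rerouting keeps all vertices in $\interbox{\boldvec{i}}{\boldvec{j}}$ and strictly lowers the excess length, terminating in a monotone chain, which is a shortest chain.

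The main obstacle is exactly this local filling property, and more delicately the control of rerouting when the given chain dips into the margin of $\interbox{\boldvec{i}}{\boldvec{j}}$ lying outside the box spanned by $\boldvec{i}$ and $\boldvec{j}$; this margin has width $\degree_{(\level,k)}+1-|i_k-j_k|$ in direction $k$ and is where "detours" can live. One must show that any such detour can be pushed back onto the spanned box without leaving $\Bll$, so that connectivity within the interaction box genuinely upgrades to monotone connectivity. This is where the hypothesis of \Cref{assum:refinement-domains} does the real work, ruling out the thin, snake-like configurations of $\Bll$ that would otherwise connect $\xbsp_{\boldvec{i}}$ and $\xbsp_{\boldvec{j}}$ inside $\interbox{\boldvec{i}}{\boldvec{j}}$ while admitting no shortest chain at all.
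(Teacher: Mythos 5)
Your handling of the first assertion is fine: the monotone-lattice-path observation is correct, and you are right that one additionally needs \(\abs{i_k-j_k}\le \degree_{(\level,k)}+1\), which is available because the lemma is only ever invoked for pairs sharing a \nlintersec (the paper dismisses this half as trivial, so your treatment is if anything more careful). One caveat: when \(\abs{i_k-j_k}>\degree_{(\level,k)}+1\) the first claim is not \emph{vacuous} as you assert --- shortest chains between far-apart B-splines can perfectly well exist --- it is simply outside the regime in which the lemma is used.

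The genuine gap is in the second assertion. The ``local filling property'' your rerouting move rests on --- that if \(\xbsp_{\boldvec{a}},\,\xbsp_{\boldvec{a}+\boldvec{\delta}_1},\,\xbsp_{\boldvec{a}+\boldvec{\delta}_2}\in\Bll\) then also \(\xbsp_{\boldvec{a}+\boldvec{\delta}_1+\boldvec{\delta}_2}\in\Bll\) --- is false, even under \Cref{assum:refinement-domains}. Writing \(p_k=\degree_{(\level,k)}\) and suppressing level and direction indices on the knots, take \(\domain_{\level+1}=\closedsupp(\xbsp_{\boldvec{a}})\cup\closedsupp(\xbsp_{\boldvec{a}+\boldvec{\delta}_1})\cup\closedsupp(\xbsp_{\boldvec{a}+\boldvec{\delta}_2})\); this satisfies the assumption and puts the three B-splines in \(\Bll\). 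But \(\supp(\xbsp_{\boldvec{a}+\boldvec{\delta}_1+\boldvec{\delta}_2})\) contains the open cell \((\knot_{a_1+p_1+1},\knot_{a_1+p_1+2})\times(\knot_{a_2+p_2+1},\knot_{a_2+p_2+2})\), whose points have first coordinate beyond the supports of \(\xbsp_{\boldvec{a}}\) and \(\xbsp_{\boldvec{a}+\boldvec{\delta}_2}\) and second coordinate beyond the support of \(\xbsp_{\boldvec{a}+\boldvec{\delta}_1}\); hence this cell lies outside \(\domain_{\level+1}\) and the diagonal corner is \emph{not} in \(\Bll\). This failure is not a technicality you can expect \Cref{assum:refinement-domains} to repair: the fact that unions of supports of B-splines that are not aligned along a coordinate direction leave corner cells uncovered is precisely the mechanism that creates problematic pairs and forces the explicit refinement of \lchain corners in the first place. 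So the elementary move of your induction is unjustified, and unjustifiable in the form stated.

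What is true, and what the paper's proof uses instead, is the \emph{collinear} covering fact: if \(\xbsp_{\boldvec{l}},\xbsp_{\boldvec{t}}\in\Bll\) satisfy \(l_k=t_k\) and their supports overlap in the other direction (index difference at most \(\degree_{(\level,k')}+1\)), then every intermediate B-spline on the direction-\(k'\) segment between them has support contained in \(\closedsupp(\xbsp_{\boldvec{l}})\cup\closedsupp(\xbsp_{\boldvec{t}})\subseteq\domain_{\level+1}\), hence lies in \(\Bll\). The paper therefore straightens the chain by a non-local move: locate a ``kink'', i.e.\ two chain vertices with a common coordinate whose straight connection is not a subsequence of the chain, fill in that straight segment using the collinear fact, and replace the entire intervening detour by it, strictly decreasing the length. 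Replacing your unit-square rerouting by this kink removal is what is needed to make your straightening induction work; diagonal corner completion never enters, and indeed cannot.
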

\begin{proof}
	The first statement is trivial and follows from the definition of the interaction box.
	If a subsequence of \(\chain{\boldvec{i}}{\boldvec{j}}\) is a shortest chain then we are
	done. Suppose then that no subset of \(\chain{\boldvec{i}}{\boldvec{j}}\) is a shortest
	chain and assume, \wlg, that \(i_k<j_k\), for \(k =1,2\). By the chain not being a
	shortest chain, we know that there must exist at least one pair,
	\(\xbsp_{\boldvec{l}},\xbsp_{\boldvec{t}} \in \chain{\boldvec{i}}{\boldvec{j}} \cap
	\interbox{\boldvec{i}}{\boldvec{j}}\), with a ``kink''. By this, we mean that there
	exists \(k \in \{1,2\}\) such that \(l_{k} = t_{k}\) but the corresponding
	direction-\(k'\) chain \(\chain{\boldvec{l}}{\boldvec{t}}\), \(k' \in \{1, 2\}
	\backslash k\), is not a  subsequence of \(\chain{\boldvec{i}}{\boldvec{j}}\) — see
	\Cref{fig:kink-example} for an example.
	\begin{figure}[ht]
		\centering
		\hfill
		\begin{subfigure}[t]{0.35\textwidth}
			\centering
			\begin{tikzpicture}[scale=0.4]
\coordinate (B_i) at (1.0, 1.0);
	\coordinate (B_j) at (5.0, 5.0);
	\fill[gray, opacity=0.6] (B_i) rectangle (5.0, 5.0);
	\fill[gray, opacity=0.6] (B_j) rectangle (9.0, 9.0);

\foreach \x in {0.0,...,10.0} 
		\draw[black, line width=1.0pt] (\x, 0.0) -- (\x, 10.0);
	\foreach \y in {0.0,...,10.0} 
		\draw[black, line width=1.0pt] (0.0, \y) -- (10.0, \y);

\foreach \x in {0.0,...,7.0} 
		\draw[black, line width=1.0pt] (\x + 1.5, 1.0) -- (\x + 1.5, 9.0);
	\foreach \y in {0.0,...,7.0} 
		\draw[black, line width=1.0pt] (1.0, \y + 1.5) -- (9.0, \y + 1.5);

\fill[black] (B_i) circle (0.2);
	\node[anchor=north west] at (B_i) {\(\boldvec{i}\)};
	\fill[black] (1.0, 2.0) circle (0.2);
	\fill[black] (2.0, 2.0) circle (0.2);
	\fill[black] (3.0, 2.0) circle (0.2);
	\draw[black, line width=1.0pt] (3.0, 2.0) circle (0.3);
	\draw[black, line width=1.0pt] (4.0, 2.0) circle (0.3);
	\draw[black, line width=1.0pt] (5.0, 2.0) circle (0.3);
	\fill[black] (3.0, 1.0) circle (0.2);
	\fill[black] (4.0, 1.0) circle (0.2);
	\fill[black] (5.0, 1.0) circle (0.2);
	\fill[black] (5.0, 2.0) circle (0.2);
	\fill[black] (5.0, 3.0) circle (0.2);
	\fill[black] (5.0, 4.0) circle (0.2);
	\fill[black] (B_j) circle (0.2);
	\node[xshift=0.1cm, yshift=0.1cm] at (B_j) {\(\boldvec{j}\)};
\end{tikzpicture}
 			\caption{\(\chain{\boldvec{i}}{\boldvec{j}}\)}
			\label{fig:kink-example-a}
		\end{subfigure}
		\begin{subfigure}[t]{0.35\textwidth}
			\centering
			\begin{tikzpicture}[scale=0.4]
\coordinate (B_i) at (1.0, 1.0);
	\coordinate (B_j) at (5.0, 5.0);
	\fill[gray, opacity=0.6] (B_i) rectangle (5.0, 5.0);
	\fill[gray, opacity=0.6] (B_j) rectangle (9.0, 9.0);

\foreach \x in {0.0,...,10.0} 
		\draw[black, line width=1.0pt] (\x, 0.0) -- (\x, 10.0);
	\foreach \y in {0.0,...,10.0} 
		\draw[black, line width=1.0pt] (0.0, \y) -- (10.0, \y);

\foreach \x in {0.0,...,7.0} 
		\draw[black, line width=1.0pt] (\x + 1.5, 1.0) -- (\x + 1.5, 9.0);
	\foreach \y in {0.0,...,7.0} 
		\draw[black, line width=1.0pt] (1.0, \y + 1.5) -- (9.0, \y + 1.5);

\fill[black] (B_i) circle (0.2);
	\node[anchor=north west] at (B_i) {\(\boldvec{i}\)};
	\fill[black] (1.0, 2.0) circle (0.2);
	\fill[black] (2.0, 2.0) circle (0.2);
	\fill[black] (3.0, 2.0) circle (0.2);
	\fill[black] (4.0, 2.0) circle (0.2);
	\fill[black] (5.0, 2.0) circle (0.2);
	\fill[black] (5.0, 3.0) circle (0.2);
	\fill[black] (5.0, 4.0) circle (0.2);
	\fill[black] (B_j) circle (0.2);
	\node[xshift=0.1cm, yshift=0.1cm] at (B_j) {\(\boldvec{j}\)};
\end{tikzpicture}
 			\caption{\(\widetilde{C}_{\boldvec{i},\boldvec{j}}\)}
			\label{fig:kink-example-b}
		\end{subfigure}
		\hfill
		\strut
		\caption{
			Example of a chain, \(\chain{\boldvec{i}}{\boldvec{j}}\), with a “kink”, and its
			counterpart, \(\widetilde{C}_{\boldvec{i},\boldvec{j}}\), with the “kink”
			removed. Shaded regions represent the supports of the bicubic B-splines
			\(\xbsp_{\boldvec{i}}\) and \(\xbsp_{\boldvec{j}}\), filled dots the indices of
			functions in each chain, and hollow dots the indices of functions in
			\(\interbox{\boldvec{i}}{\boldvec{j}}\) that can be used to remove the “kink”.
		}
		\label{fig:kink-example}
	\end{figure}

	However, since \(\xbsp_{\boldvec{l}}\) and \(\xbsp_{\boldvec{t}}\) are both in
	\(\interbox{\boldvec{i}}{\boldvec{j}}\), their supports have a non-empty intersection,
	which implies that \(\supp(\xbsp_{\boldvec{r}}) \subseteq
	\closedsupp{(\xbsp_{\boldvec{l}})} \cup \closedsupp{(\xbsp_{\boldvec{t}})} \subseteq
	\Omega_{\level+1} \) for all \(\boldvec{r} \in \NN^{2}\) such that
	\[
		l_{k} = r_{k} = t_{k} ~\wedge~
		\min(l_{k'},t_{k'}) < r_{k'} < \max(l_{k'},t_{k'})\;.
	\]
	Hence, all the B-splines in \(\chain{\boldvec{l}}{\boldvec{t}}\) belong to \(\Bll\), and
	as a consequence to \(\interbox{\boldvec{i}}{\boldvec{j}}\), and we can use them to
	remove the ``kink'' between \(\xbsp_{\boldvec{l}}\) and \(\xbsp_{\boldvec{t}}\) to
	obtain a new chain \(\widetilde{C}_{\boldvec{i},\boldvec{j}}\) with a smaller length.
	Repeating the procedure for every such ``kink'' will yield a shortest chain between
	\(\xbsp_{\boldvec{i}}\) and \(\xbsp_{\boldvec{j}}\).
\end{proof}

\begin{lem}\label{lem:containment}
	Let \(\xbsp_{\boldvec{i}},\xbsp_{\boldvec{j}} \in \Bll\) share a \nlintersec and have an
	\lchain in \(\chain{\boldvec{i}}{\boldvec{j}}\) between them. Moreover, let
	\(\xbsp_{\boldvec{l}}\) share a \nlintersec with some \(\xbsp_{\boldvec{t}} \in
	\chain{\boldvec{i}}{\boldvec{j}}\) and such that there is an \lchain
	\(\chain{\boldvec{l}}{\boldvec{t}}\). Then, there are no problematic pairs between
	\(\chain{\boldvec{i}}{\boldvec{j}}\) and \(\chain{\boldvec{l}}{\boldvec{t}}\).
\end{lem}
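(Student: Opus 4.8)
The plan is to show, pair by pair, that every $\xbsp_{\boldvec a}$ with $\xbsp_{\boldvec a}\in\chain{\boldvec i}{\boldvec j}$ and every $\xbsp_{\boldvec b}\in\chain{\boldvec l}{\boldvec t}$ form a non-problematic pair. If $\xbsp_{\boldvec a}$ and $\xbsp_{\boldvec b}$ do not share a minimal \((\level+1)\)-intersection there is nothing to prove, so assume they do. Then their closed supports overlap, and since these are level-\(\level\) B-splines this forces \(\abs{a_k-b_k}\le \degree_{(\level,k)}+1\) for \(k=1,2\); equivalently \(\xbsp_{\boldvec b}\in\interbox{\boldvec a}\) and the whole index rectangle \(R\) with opposite corners \(\boldvec a\) and \(\boldvec b\) lies in \(\interbox{\boldvec a}{\boldvec b}\). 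By \Cref{lem:int-box-chain} it therefore suffices to exhibit \emph{some} chain from \(\boldvec a\) to \(\boldvec b\) contained in \(\interbox{\boldvec a}{\boldvec b}\): the existence of any such chain already guarantees a shortest chain, and hence that the pair is not problematic.

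The natural candidate is built from the two given \lchains, which meet at \(\boldvec t\). Since \(\boldvec b\) lies on the monotone \lchain \(\chain{\boldvec l}{\boldvec t}\), it is sandwiched between \(\boldvec l\) and \(\boldvec t\) in each coordinate, so \(\abs{b_k-t_k}\le\abs{l_k-t_k}\le\degree_{(\level,k)}+1\); likewise \(\boldvec a\) sits on \(\chain{\boldvec i}{\boldvec j}\). The sub-chain of \(\chain{\boldvec i}{\boldvec j}\) joining \(\boldvec a\) to \(\boldvec t\) and the sub-chain of \(\chain{\boldvec l}{\boldvec t}\) joining \(\boldvec t\) to \(\boldvec b\) are each either a direction-\(k\) chain or an \lchain, hence shortest chains by \Cref{lem:k-chain-shortest,lem:L-chain-shortest}, and both lie entirely in \(\Bll\). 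Concatenating them at \(\boldvec t\) yields a chain in \(\Bll\) connecting \(\boldvec a\) to \(\boldvec b\).

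The difficulty — and the step I expect to be the main obstacle — is that this concatenated chain passes through \(\boldvec t\), which may lie at distance up to \(2(\degree_{(\level,k)}+1)\) from \(\boldvec a\), so it need not be contained in \(\interbox{\boldvec a}{\boldvec b}\) and \Cref{lem:int-box-chain} cannot be applied to it verbatim. To finish I would instead show that the entire rectangle \(R\subseteq\interbox{\boldvec a}{\boldvec b}\) consists of B-splines in \(\Bll\), so that the monotone staircase inside \(R\) is the desired chain. This is a genuinely two-dimensional coverage statement: for \(\boldvec r\in R\) the containment \(\supp(\xbsp_{\boldvec r})\subseteq\closedsupp(\xbsp_{\boldvec a})\cup\closedsupp(\xbsp_{\boldvec b})\) only holds coordinate-wise, and the product structure means that closing the gap requires the intermediate splines supplied by the two \lchains. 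I would establish it by the same kink-removal mechanism as in the proof of \Cref{lem:int-box-chain}: consecutive members of the concatenated chain that agree in one coordinate have overlapping supports, which forces the straight direction-connector between them into \(\Bll\); iterating pushes the path off \(\boldvec t\) and into \(R\). Verifying that this process terminates inside \(\interbox{\boldvec a}{\boldvec b}\), using the monotonicity of both \lchains and their meeting at \(\boldvec t\) together with \Cref{assum:refinement-domains}, is where the real work lies.

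An alternative route that sidesteps some of the coverage bookkeeping is to apply \Cref{lem:problematic-chain} twice. If \(\xbsp_{\boldvec a}\) were problematic with \(\chain{\boldvec l}{\boldvec t}\) it would be problematic with one of \(\boldvec l,\boldvec t\), or the corner \(\boldvec c'\) of \(\chain{\boldvec l}{\boldvec t}\), and a second application against \(\chain{\boldvec i}{\boldvec j}\) reduces the whole question to the nine corner pairs in \(\{\boldvec i,\boldvec j,\boldvec c\}\times\{\boldvec l,\boldvec t,\boldvec c'\}\). The three pairs involving \(\boldvec t\) are discharged immediately, since \(\boldvec t\) lies on \(\chain{\boldvec i}{\boldvec j}\) and is therefore joined to \(\boldvec i,\boldvec j,\boldvec c\) by shortest sub-chains; the remaining six pairs are local configurations near \(\boldvec t\), to which the interaction-box argument above applies in a bounded window.
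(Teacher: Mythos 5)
Your overall route is the paper's own: restrict to a pair \(\xbsp_{\boldvec{a}} \in \chain{\boldvec{i}}{\boldvec{j}}\), \(\xbsp_{\boldvec{b}} \in \chain{\boldvec{l}}{\boldvec{t}}\) sharing a \nlintersec, connect them by concatenating the sub-chain of \(\chain{\boldvec{i}}{\boldvec{j}}\) from \(\boldvec{a}\) to \(\boldvec{t}\) with the sub-chain of \(\chain{\boldvec{l}}{\boldvec{t}}\) from \(\boldvec{t}\) to \(\boldvec{b}\), and invoke \Cref{lem:int-box-chain}. However, the step you flag as ``the main obstacle'' --- that \(\boldvec{t}\) could sit at index distance up to \(2(\degree_{(\level,k)}+1)\) from \(\boldvec{a}\), so the concatenated chain might leave \(\interbox{\boldvec{a}}{\boldvec{b}}\) --- is precisely where the proof must close, and you do not close it. The missing observation is that sharing a \nlintersec propagates along \lchains: because an \lchain is monotone in each index, for any two of its splines the intersection of their closed supports contains \(\closedsupp{(\xbsp_{\boldvec{i}})} \cap \closedsupp{(\xbsp_{\boldvec{j}})}\), so \emph{every} pair on \(\chain{\boldvec{i}}{\boldvec{j}}\) --- in particular \(\xbsp_{\boldvec{a}}\) and \(\xbsp_{\boldvec{t}}\) --- shares a \nlintersec. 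This forces \(\abs{a_k - t_k} \leq \degree_{(\level,k)}+1\), i.e.\ \(\xbsp_{\boldvec{t}} \in \interbox{\boldvec{a}}\); symmetrically \(\xbsp_{\boldvec{t}} \in \interbox{\boldvec{b}}\) because \(\boldvec{t}\) and \(\boldvec{b}\) both lie on \(\chain{\boldvec{l}}{\boldvec{t}}\). Hence \(\xbsp_{\boldvec{t}} \in \interbox{\boldvec{a}}{\boldvec{b}}\), your worst-case bound of \(2(\degree_{(\level,k)}+1)\) never materializes, and since interaction boxes are cut out by coordinate-wise inequalities (so they contain every index lying coordinate-wise between two of their members), monotonicity of the two sub-chains places the whole concatenated chain inside \(\interbox{\boldvec{a}}{\boldvec{b}}\). \Cref{lem:int-box-chain} then yields a shortest chain, and the pair is not problematic.

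Because you miss this observation, both of your substitute arguments remain incomplete. The rectangle-coverage/kink-removal plan is only sketched --- you yourself defer its termination as ``where the real work lies'' --- and it is unnecessary once the propagation property is in hand. The alternative via \Cref{lem:problematic-chain} does validly reduce the claim to the nine pairs in \(\{\boldvec{i},\boldvec{j},\boldvec{c}\}\times\{\boldvec{l},\boldvec{t},\boldvec{c}'\}\) and correctly discharges the three pairs containing \(\boldvec{t}\), but the remaining six pairs require exactly the same containment-of-\(\boldvec{t}\) argument you lack, so that route does not complete the proof either. As it stands, the proposal identifies the correct strategy but leaves its essential step open.
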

\begin{proof}
	We will prove the result directly, by showing that there is a shortest chain between any
	pair of B-splines, one in each \lchain, that share a \nlintersec.

	As such, suppose that \(\xbsp_{\boldvec{p}} \in \chain{\boldvec{i}}{\boldvec{j}}\) and
	\(\xbsp_{\boldvec{p}'}\in \chain{\boldvec{l}}{\boldvec{t}}\) share a \nlintersec. Since
	\(\xbsp_{\boldvec{i}}\) and \(\xbsp_{\boldvec{j}}\) share a \nlintersec, the same is
	true for any other pair of B-splines in \(\chain{\boldvec{i}}{\boldvec{j}}\), and in
	particular for \(\xbsp_{\boldvec{p}}\) and \(\xbsp_{\boldvec{t}}\). Analogously,
	\(\xbsp_{\boldvec{p}'}\) and \(\xbsp_{\boldvec{t}}\) must also share a \nlintersec,
	since both belong to \(\chain{\boldvec{l}}{\boldvec{t}}\). Hence, we can conclude that
	\(\xbsp_{\boldvec{t}} \in \interbox{\boldvec{p}}{\boldvec{p}'} \).

	Using the definition of \lchains, it is clear that we can find subsequences of
	\(\chain{\boldvec{i}}{\boldvec{j}}\) and \(\chain{\boldvec{l}}{\boldvec{t}}\), contained
	in \(\interbox{\boldvec{p}}{\boldvec{p}'}\), that are chains between
	\(\xbsp_{\boldvec{t}}\) and \(\xbsp_{\boldvec{p}}\) and between  \(\xbsp_{\boldvec{t}}\)
	and \(\xbsp_{\boldvec{p}'}\), respectively. Invoking \Cref{lem:int-box-chain}, we reason
	that there is a shortest chain between \(\xbsp_{\boldvec{p}}\) and
	\(\xbsp_{\boldvec{p}'}\), thus finishing the proof.
\end{proof}

\begin{rem}\label{rem:preferable-lchain}
	Let \(\xbsp_{\boldvec{i}},\xbsp_{\boldvec{j}}\in \Bll\) be a problematic pair. There are
	two possible \lchains that can be constructed between the pair --- depending on the corner
	element chosen. It is always preferable to select the \lchain that leads to the highest
	number of resolved B-splines, as this reduces the necessary checks for new problematic
	pairs.
\end{rem}

In this section, we have shown that \lchains can be used to create exact complexes, by using
them to connect problematic pairs, thus creating a shortest chain between them as required
by the conditions of \Cref{thm:exact-complex}. In the next section, we will detail how the
theoretical results of this section can be used to implement an algorithm to identify and
solve problematic pairs through refinement procedures.
 \section{Algorithms}\label{sec:algorithms}

We will now describe a set of algorithms that can be easily incorporated into standard
numerical solvers, and that ensure that the resulting discrete de Rham complex remains exact
after local refinement.

The main algorithm is \Cref{alg:exact-mesh}, which takes as input a hierarchical space with
no problematic pairs, described by its basis \(\thbbasis^{\boldvec{0}}_{L}\), and a set of
elements marked for refinement, with the implicit assumption that both
\(\thbbasis^{\boldvec{0}}_{L}\) and the marked elements respect
\Cref{assum:refinement-domains}. The algorithm returns an updated basis
\(\thbbasis^{\boldvec{0}}_{L_*}\) corresponding to an exact complex, where we note that
\(L_*\) is either \(L\) or \(L+1\), depending on the marked elements. From an implementation
standpoint, we consider that the objects \(\thbbasis^{\boldvec{0}}_{L}\) and
\(\thbbasis^{\boldvec{0}}_{L_*}\) have information about the full single-level
tensor-product basis at each level, as well as the corresponding hierarchical mesh.

At every level, this algorithm performs the following steps. We first refine the marked
elements given as input, and from those we create the set of possibly problematic pairs that
need to be checked, this will be detailed in \cref{alg:initiate-pairs}. We then enter an
iterative \var{while} loop to remove the problematic pairs. For each problematic pair we
create a shortest chain in the form of an \lchain, selected according to
\cref{rem:preferable-lchain} using the method \var{get\_lchain\_corner}, and then refine the
corresponding corner functions for all the selected \lchains. Since the refinement of corner
functions may cause the appearance of new problematic pairs, we generate a new set of pairs
to be checked at the next step of the \var{while} loop, exploiting
\cref{lem:problematic-chain} to consider only those pairs which contain at least one of the
refined corner functions. At the end of the loop there are no problematic pairs at the
current level, but \Cref{assum:refinement-domains} may not be satisfied if the support of
the refined corner functions is not contained in \(\domain_{\level - 1}\). To repair this,
we loop over the newly refined corners to check their support, and if the condition is
violated we use the method \var{get\_a\_parent\_func} to choose a parent function that will
add the least amount of refinement, and modify the marked elements of the previous level to
ensure that its support will be refined. Since the refinement of the parent function may
again violate \Cref{assum:refinement-domains} in previous levels, we also store it to check
its support at the next step. Finally, after looping through all levels we update the
hierarchical basis using the new refined mesh.

\begin{algorithm}
	\caption{Exact mesh refinement.}
	\label{alg:exact-mesh}
	\begin{algorithmic}[1]
		\Require \(\thbbasis^{\boldvec{0}}_L\), \var{marked\_els}.
		\Ensure Refined \(\thbbasis^{\boldvec{0}}_{L_*}\) with no problematic pairs.

		\State \(\var{previous\_parents} \gets \emptyset \)
		\ForAll{\(\level \var{ in } \{L,\dots,0\}\)}
		\State \(\thbbasis^{\boldvec{0}}_L \gets \var{refine\_mesh}(\thbbasis^{\boldvec{0}}_L, \level, \var{marked\_els}[\level])\)
		\State \(\var{unchecked\_pairs} \gets
		\var{initiate\_pairs}(\thbbasis^{\boldvec{0}}_L,\level,\var{marked\_els}[\level])\)
		\State \(\var{problematic\_mesh} \gets \var{true}\)
		\State \(\var{level\_corners} \gets \emptyset\)
		\While{\var{problematic\_mesh}}
		\State  \(\var{problematic\_mesh} \gets \var{false}\)
		\State \(\var{current\_corners} \gets \emptyset\)
		\ForAll{\var{pair in unchecked\_pairs}}
		\If{\(\neg\var{is\_problematic}(\thbbasis^{\boldvec{0}}_L, \level, \var{pair})\)}
		\State \var{continue}
		\EndIf
		\Statex
		\State \var{corner} \(\gets\)
		\var{get\_lchain\_corner}(\(\thbbasis^{\boldvec{0}}_L, \level, \var{pair}\))
		\State \var{current\_corners} \(\gets\) \var{current\_corners} \(\cup\)
		\var{corner}
		\State  \(\var{problematic\_mesh} \gets \var{true}\)
		\EndFor
		\Statex
		\State \(\thbbasis^{\boldvec{0}}_L \gets \var{refine\_mesh}(\thbbasis^{\boldvec{0}}_L, \level, \var{get\_support(current\_corners)})\)
		\State \var{unchecked\_pairs} \(\gets\)
		\var{get\_local\_pairs}(\(\thbbasis^{\boldvec{0}}_L, \level,\) \var{current\_corners})
		\State \(\var{level\_corners} \gets \var{level\_corners} \cup \var{current\_corners}\)
		\EndWhile
		\Statex
		\State \(\var{level\_corners} \gets \var{level\_corners} \cup \var{previous\_parents}\)
		\State \(\var{refined\_elements} \gets \var{marked\_els}[\level-1] \cup
		\var{get\_subdomain}(\thbbasis^{\boldvec{0}}_L, \level - 1)\)
		\State
		\(\var{previous\_parents} \gets \emptyset \)
		\ForAll{\var{func in level\_corners}}
		\If{\(\neg\var{is\_supported\_on}(\thbbasis^{\boldvec{0}}_L, \level-1,
			\var{refined\_elements}, \var{func})\)}
		\State \(\var{parent\_func} \gets
		\var{get\_a\_parent\_func(} \var{func)}\)
		\State \(\var{previous\_parents} \gets \var{previous\_parents} \cup \var{parent\_func} \)
		\State \var{marked\_els}\([\level-1] \gets \var{marked\_els} [\level-1] \cup \var{get\_support(parent\_func)}\)
		\EndIf
		\EndFor
		\EndFor
		\Statex
		\State \(\thbbasis^{\boldvec{0}}_{L_*}\gets\)
		\var{update\_space(\(\thbbasis^{\boldvec{0}}_{L}\))}
		\Statex
		\State \Return \(\thbbasis^{\boldvec{0}}_{L_*}\)
	\end{algorithmic}
\end{algorithm}

To generate the possibly problematic pairs,  \cref{alg:initiate-pairs} loops over the basis
functions in \(\Bll\). According to \Cref{lem:problematic-sides} we can limit ourselves to
pairs formed by unresolved basis functions, that are selected using the method
\var{is\_resolved}. Moreover, since the input mesh is assumed to have no problematic pairs,
every problematic pair must originate from what has been added to \(\Bll\) after refinement,
i.e., the supports intersect the marked elements. Once we have collected all such functions,
we call \var{get\_local\_pairs} to ensure the locality of the generated pairs. This method,
detailed in \Cref{alg:get_local_pairs}, loops over all the previous unresolved functions
(the third input argument) and, for each, stores pairs of other unresolved functions in its
interaction box. Finally, we use \var{unique} to remove repeated pairs.

To check if a given pair is problematic, \cref{alg:is-problematic-pair} calls
\cref{alg:nlintersec} to check for a \nlintersec, and \cref{alg:shortest-chain} to check if
there is a shortest chain between them.

\cref{alg:nlintersec} performs the check for a \nlintersec between the pair using
\cref{def:nlintersec}. It first calls the method \var{get\_support\_per\_dim}, that computes
the endpoints of the intervals where a basis function is supported, in each dimension, as
defined in \eqref{eq:breakpoint-indices}. Concretely, if the support of \(\xbsp\) in
dimension \(k\) contains only the intervals
\((\zeta_i,\zeta_{i+1}),\dots,(\zeta_j,\zeta_{j+1})\), for some \(i,j\), then
\var{get\_support\_per\_dim} would return the integers \(\{i,\dots,j+1\}\). Then, the
algorithm computes the intersection of the supports of the pair of basis functions. By
working with the integer indices of the knots, the subsequent computations are cheaper than
if using real values. Another method used in this algorithm that should be implemented is
\var{get\_contained\_indices}, which returns the biggest subset of \(\knotvec_{(\level+1,
	k)}\) contained in the intersection of the pair's supports, for a given dimension \(k\) and
level \(\level\).

Then, \Cref{alg:shortest-chain} is used to check if a pair of basis functions has a shortest
chain connecting them. We assume that the given pair of basis functions shares a
\nlintersec, since this method is only called after \cref{alg:nlintersec} returns a true
condition. The check for a direction-\(k\) chain between the pair is trivial. If no such
chain exists, we then construct the interaction box, as in \Cref{def:int-box}, and check if
there is a shortest chain between the pair using \Cref{lem:int-box-chain}. To do this, we
chose to implement the interaction box as a graph --- in this case, a subgraph of a lattice
graph --- and then use a standard algorithm to check if a path exists between nodes on a
graph, named as \var{has\_chain} in the algorithm. Packages that perform these operations
are commonplace and should be available in most programming languages.

\begin{rem}
	\Cref{alg:exact-mesh} can be adapted to enforce an admissible
	\(\hbspace^{\boldvec{0}}_{L}\) with few alterations. For a given refinement level
	\(\level\), the imposition of admissibility only affects elements at levels
	\(\tilde{\level}\), with \(\tilde{\level} < \level\). Therefore, we can impose
	admissibility after fixing all the problematic pairs at a level \(\level\). This way, we
	ensure that any possible problematic pair introduced by the addition of admissibility
	will still be fixed in the following iterations.
\end{rem}

\begin{rem}
	Algorithm \ref{alg:exact-mesh} could be further optimized by using
	\Cref{lem:containment} to reduce the number of pairs to be checked. We do not do this
	here to keep the description of the algorithm simple.
\end{rem}

\begin{algorithm}
	\caption{\var{initiate\_pairs}}
	\label{alg:initiate-pairs}
	\begin{algorithmic}[1]
		\Require
		\(\thbbasis^{\boldvec{0}}_{L}\),
		level \(\level\),
		\(\var{marked\_els}[\level]\)
		\Ensure Pairs of basis functions that might be problematic.
		\State \(\var{unchecked} \gets \emptyset\)
		\ForAll{\(\xbsp_{\boldvec{i}} \in \Bll \)} \Comment{As defined in \Cref{sec:exact-meshes}.}
		\If{\(\neg \var{is\_resolved}(\xbsp_{\boldvec{i}})\) and
			\(\closedsupp{(\xbsp_{\boldvec{i}})} \cap \var{marked\_els}[\level] \neq \emptyset\)}
		\State \var{unchecked} \(\gets \var{unchecked} \cup
		\xbsp_{\boldvec{i}}\)
		\Comment{Using \Cref{lem:problematic-sides}.}
		\EndIf
		\EndFor
		\Statex
		\State \( \var{unchecked\_pairs} \gets \var{get\_local\_pairs}(\thbbasis^{\boldvec{0}}_{L}, \level, \var{unchecked}) \)
		\Statex

		\State \Return \var{unchecked\_pairs}
	\end{algorithmic}
\end{algorithm}

\begin{algorithm}
	\caption{\var{get\_local\_pairs}}
	\label{alg:get_local_pairs}
	\begin{algorithmic}[1]
		\Require
		\(\thbbasis^{\boldvec{0}}_{L}, \level\),
		\(\var{unchecked}\)
		\Ensure Pairs of basis functions that might be problematic.

		\State \(\var{pairs} \gets \emptyset \)
		\ForAll{\(\xbsp_{\boldvec{i}} \in \var{unchecked}\)}
		\ForAll{\(\xbsp_{\boldvec{j}} \in \interbox{\boldvec{i}}\)}
		\If {\(\neg \var{is\_resolved}(\xbsp_{\boldvec{j}})\)}
		\State \(\var{pairs} \gets \var{pairs} \cup (\xbsp_{\boldvec{i}}, \xbsp_{\boldvec{j}})\)
		\EndIf
		\EndFor
		\EndFor
		\Statex
		\State \(\var{pairs} \gets \var{unique(pairs)}\)
		\Statex
		\State \Return \var{pairs}
	\end{algorithmic}
\end{algorithm}

\begin{algorithm}
	\caption{\var{is\_problematic} (Checks if a pair of B-splines is problematic.)}
	\label{alg:is-problematic-pair}
	\begin{algorithmic}[1]
		\Require
		\(\thbbasis^{\boldvec{0}}_{L}\), level \(\level\),
		\var{pair} = \(
		(\xbsp_{\boldvec{i}}, \xbsp_{\boldvec{j}})
		\).
		\Ensure Boolean indicating if the pair is problematic.
		\If{\(\neg\var{has\_minimal\_intersection}(\thbbasis^0_{L}, \level, \var{pair})\)} \Comment{See \cref{alg:nlintersec}}
		\State \Return \var{false}
		\EndIf
		\Statex
		\State \Return \(\neg\var{has\_shortest\_chain}(\thbbasis^0_{L}, \level, \var{pair})\) \Comment{See \cref{alg:shortest-chain}}
	\end{algorithmic}
\end{algorithm}

\begin{algorithm}
	\caption{\var{has\_minimal\_intersection} (Checks if there is a \nlintersec between a pair of
		B-splines.)}
	\label{alg:nlintersec}
	\begin{algorithmic}[1]
		\Require \(\thbbasis^{\boldvec{0}}_{L}\), level \(\level\), \var{pair}
		= \( (\xbsp_{\boldvec{i}}, \xbsp_{\boldvec{j}}) \).
		\Ensure Boolean indicating if the pair shares a \nlintersec.
		\State \var{supp\_bi} \(\gets\) \var{get\_support\_per\_dim}
		(\(\xbsp_{\boldvec{i}}\))
		\State \var{supp\_bj} \(\gets\) \var{get\_support\_per\_dim}
		(\(\xbsp_{\boldvec{j}}\))
		\ForAll{\(k \var{ in } \{1, 2\}\)}
		\State \var{supp\_intersection} \(\gets\) \var{supp\_bi[k]} \(\cap\)
		\var{supp\_bj[k]}
		\State \(I_k \gets\) \var{get\_contained\_indices}(\var{supp\_intersection}, \(\knotvec_{(\level+1,k)}\))
		\Comment{At level \(\ell+1\).}
		\State \var{length\_flag[k]} \(\gets\)
		\var{length(\(I_k\))\;>\;\(p_{(\level+1,k)}\)}
		\Comment{Condition of \Cref{def:nlintersec}.}
		\EndFor
		\Statex
		\State \Return \var{any(length\_flag)}
	\end{algorithmic}
\end{algorithm}

\begin{algorithm}
	\caption{
		\var{has\_shortest\_chain} (Checks if a pair of B-splines that share a
		\nlintersec have a shortest chain between them.)
	}
	\label{alg:shortest-chain}
	\begin{algorithmic}[1]
		\Require
		\(\thbbasis^{\boldvec{0}}_{L}\),
		level \(\level\),
		\var{pair} = \(
		(\xbsp_{\boldvec{i}}, \xbsp_{\boldvec{j}})
		\).
		\Ensure Boolean indicating if there is a shortest chain between the pair of
		B-splines.
		\If{\var{any}(\(\boldvec{i}-\boldvec{j}\))==0}
		\State \Return \var{true}
		\Comment{There is a direction-\(k\) chain.}
		\EndIf
		\Statex
		\State \Return \var{has\_chain(\(\interbox{\boldvec{i}}{\boldvec{j}}\), \(\boldvec{i}\), \(\boldvec{j}\))}
		\Comment{Using \Cref{def:int-box} and \Cref{lem:int-box-chain}.}

	\end{algorithmic}
\end{algorithm}

\begin{thm}
	Given a hierarchical basis \(\thbbasis^{\boldvec{0}}_L\) satisfying
	\Cref{assum:refinement-domains} and without problematic pairs,
	\cref{alg:exact-mesh} returns a basis which satisfies the same properties.
\end{thm}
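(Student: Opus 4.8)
The plan is to verify the two required properties—that the output respects \Cref{assum:refinement-domains} and that it has no problematic pairs—by an induction over the levels in the order the outer loop visits them, from $\level = L$ down to $\level = 0$. The structural fact that makes a single top-down sweep sufficient, and which I would isolate as a preliminary lemma, is that processing level $\level$ enlarges only $\domain_{\level+1}$: both \texttt{refine\_mesh} (applied to the marked elements and to the corner supports) and the corner refinements inside the \texttt{while} loop add to $\domain_{\level+1}$, whereas every refinement these force on a coarser domain is merely \emph{recorded} by appending parent functions to \texttt{marked\_els}$[\level-1]$, to be carried out when level $\level-1$ is reached. Since the set $\Bll$, the notion of a \nlintersec, and the existence of shortest chains all depend only on $\domain_{\level+1}$ and on the level-$\level$ and level-$(\level+1)$ knot structure—never on the coarser domains—once level $\level$ has been processed its family $\Bll$ and its problematic pairs are frozen, and cannot be disturbed by the processing of any later (coarser) level.

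For \Cref{assum:refinement-domains} I would first observe that each region ever added to a refinement domain is a union of supports of B-splines of the correct level: the marked elements satisfy the assumption by hypothesis, the parents inherited from level $\level+1$ are level-$\level$ functions, and the corners are level-$\level$ functions, so $\domain_{\level+1}$ stays a union of supports drawn from $\bspbasis^{\boldvec{0}}_{\level}$. The only remaining requirement is nestedness, i.e.\ that after a corner $\xbsp_{\boldvec{c}}$ is refined one has $\closedsupp(\xbsp_{\boldvec{c}}) \subseteq \domain_{\level}$. For this I would use the standard fact that every level-$\level$ B-spline is contained in the support of at least one of its level-$(\level-1)$ parents; the algorithm's final block—calling \texttt{get\_a\_parent\_func} and augmenting \texttt{marked\_els}$[\level-1]$ whenever \texttt{is\_supported\_on} fails—records precisely the coarse refinement that restores this inclusion once level $\level-1$ is visited, and the \texttt{previous\_parents} list propagates the same check one level further down. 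Because $\domain_0 = \domain$ contains every support, the downward propagation terminates and all inclusions $\domain_{\level+1} \subseteq \domain_{\level}$ hold at the end.

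To rule out problematic pairs I would argue one level at a time. Termination of the \texttt{while} loop follows from finiteness of $\bspbasis^{\boldvec{0}}_{\level}$, since any iteration that does not exit strictly enlarges $\Bll$ by refining at least one corner. For each treated pair the loop selects an \lchain joining it—a shortest chain by \Cref{lem:L-chain-shortest}, with corner chosen per \cref{rem:preferable-lchain}—and refines that corner so that, after refinement, the \lchain lies in $\Bll$. Correctness then rests on a monotonicity observation: adding functions to $\Bll$ can only create new shortest chains and never removes one, and it does not alter whether two already-present functions share a \nlintersec; hence no pair of previously present functions can turn problematic, and every newly created problematic pair must involve a function just added to $\Bll$. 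This is what licenses seeding the search in \Cref{alg:initiate-pairs} from functions whose support meets the marked region, restricted to unresolved functions by \Cref{lem:problematic-sides}; inside the loop, \Cref{lem:problematic-chain} ensures that any function conflicting with a freshly refined \lchain also conflicts with one of its endpoints or its corner, so regenerating candidates from \texttt{current\_corners} via \texttt{get\_local\_pairs} (with the search confined to interaction boxes by \Cref{lem:int-box-chain}) captures every new conflict. When the loop exits, every pair in $\Bll$ sharing a \nlintersec has a shortest chain, so \Cref{thm:exact-complex} gives exactness—equivalently, no problematic pairs—at level $\level$; the decoupling lemma of the first paragraph then promotes this to all levels simultaneously.

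The step I expect to be the main obstacle is the completeness of the conflict bookkeeping: proving that no problematic pair is ever overlooked despite the search being reseeded only from corners. Making this rigorous requires combining \Cref{lem:problematic-chain} with the fact that, as corners are added, the endpoints of a treated pair become resolved in the pertinent direction, so that \Cref{lem:problematic-sides} repeatedly pushes any surviving conflict from an interior chain function toward a corner; I would also need to confirm that refining the selected corner genuinely places the whole \lchain in $\Bll$. A second delicate point, tightly coupled to this, is verifying that the coarse refinement forced through parent functions—which can itself manufacture new problematic pairs at level $\level-1$—is always present in \texttt{marked\_els}$[\level-1]$ before the sweep reaches that level, so that those pairs are seeded and resolved there; this is exactly the role of updating \texttt{marked\_els}$[\level-1]$ during the processing of level $\level$ together with processing the levels in decreasing order.
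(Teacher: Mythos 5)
Your proposal is correct and takes essentially the same approach as the paper's proof: a top-down sweep over levels, while-loop termination by finiteness of the level-\(\level\) basis, new conflicts traced back to the refined corners via \Cref{lem:problematic-sides,lem:problematic-chain}, \lchains serving as shortest chains by \Cref{lem:L-chain-shortest}, nestedness restored by recording parent functions in the coarser level's marked elements, and exactness concluded from \Cref{thm:exact-complex}. The details you flag as remaining obstacles (that refining the corner suffices to place the whole \lchain in \(\Bll\), and that parent-induced refinement is seeded into the marked elements before coarser levels are processed) are precisely the points the paper's proof dispatches in a sentence each, so your outline matches it throughout.
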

\begin{proof}
	Let \(\thbbasis^{\boldvec{0}}_L\) and \var{marked\_els} be the inputs to
	\cref{alg:exact-mesh}.
	Since the outermost \var{for} loop in the algorithm runs from \(L\) to \(0\), it
	obviously terminates.
	Therefore, we need to show the following:
	\begin{enumerate}
		\item The \var{while} loop terminates and fixes all problems at level \(\level\).
		\item The \var{level\_corners} \var{for} loop is enough to ensure nestedness and
		      \cref{assum:refinement-domains}.
	\end{enumerate}

	Let us consider the iteration at some level \(\level \in \{0,\dots,L\}\). The first set
	of possibly problematic pairs is given by \Cref{alg:initiate-pairs}. We start by
	discarding the resolved functions using \cref{lem:problematic-sides}, since fixing all
	the pairs with these functions will solve the problems with the resolved ones as well.
	Also, given our assumption that \(\thbbasis^{\boldvec{0}}_L\) is initially without
	problematic pairs, every possible new problematic pair must contain at least of function
	whose support's closure intersects \(\var{marked\_els}[\level]\) — which contain all the
	input marked elements and parents from the previous step.

	Inside the \var{while} loop, we will check every pair in \var{unchecked\_pairs} using
	the method \var{is\_problematic}. The first part checks if a \nlintersec is present and
	the second checks if a shortest chain exists using \cref{lem:int-box-chain}. If a given
	pair is problematic, we fix it, using \cref{lem:L-chain-shortest}, by adding an \lchain
	between the pair, and storing its corner function, which is enough to refine the \lchain
	due to proximity of functions in \cref{def:nlintersec} and the support of B-splines. We
	also set \var{problematic\_mesh} to \var{true}, since every added \lchain might
	introduce new problems. After fixing all problems in \var{unchecked\_pairs}, we refine
	the mesh with all the stored corners, and create the new set of \var{unchecked\_pairs}.
	Since only the recently added corners can introduce new problematic pairs, we again call
	\var{get\_local\_pairs} on \var{current\_corners}. Thus, we fix every problem at level
	\(\level\). To see that the \var{while} loop finishes, notice that if a given set
	\var{unchecked\_pairs} has no problems, then \var{problematic\_mesh} will stay as
	\var{false} and the loop will finish. It is also guaranteed that eventually no more
	problematic pairs will exist — the worst case scenario being a tensor-product refinement
	of the current level.

	Now, we proceed to show that the final mesh satisfies \cref{assum:refinement-domains}.
	After quitting the \var{while} loop, we have refined the mesh with the support of all
	the corners added while solving the problematic pairs, possibly breaking
	\cref{assum:refinement-domains} for \(\domain_{\level}\). Therefore, we must loop over
	all the functions in \var{level\_corners} and parents from the previous step and check
	if their support is contained in \(\domain_{\level-1}\); if not, we choose a parent
	function, store it, and update \(\var{marked\_els}[\level-1]\) with its support. This
	now ensures that \(\domain_{\level}\) satisfies \cref{assum:refinement-domains}.

	In the end, because the hierarchical mesh has been updated with all the needed \lchains
	at every level, we simply need to call \var{update\_space} to obtain a basis with no
	problematic pairs, according to \Cref{lem:L-chain-shortest,thm:exact-complex}.
\end{proof}

\begin{rem}
	We note here that \Cref{alg:exact-mesh} can be easily included in an adaptive refinement
	scheme with minimal effort.
	By adding our proposed algorithm between the \emph{refine} and \emph{solve} steps of an
	adaptive loop \cite{Buffa2022}, we can ensure that the discrete de Rham complex is exact
	at each step.
\end{rem}
 \section{Numerical Results}\label{sec:numerical-results}

In order to show that the theory holds numerically, we have devised some tests for the
non-homogeneous vector Laplace problem and the Maxwell eigenvalue problem. We focus on these
tests since the homogeneous boundary conditions that arise in the weak formulations of these
problems, both essential and natural, imply the non-existence of harmonic scalar fields in
simply connected domains.

For all the ensuing examples, we chose to work with \(h\)-refinement for primarily two
reasons. First, it makes for more visually compelling examples of what sorts of refinements
are problematic or not. Second, using \(p\)-refinement precludes the use of maximally smooth
B-splines, which is one of the major advantages of IGA. Despite this, all the theoretical
results of \Cref{sec:exact-meshes} and the algorithmic results of \Cref{sec:algorithms}
still hold for \(p\)-refinement schemes. Moreover, we will use the same polynomial degree in
every direction. Thus, from now on we will write \(\degree_{(\level, k)} = \degree\) for
every level \(\level\) and direction \(k\).

All numerical tests were implemented using the software package \emph{Mantis.jl}
\cite{C.Cabanas2025} \footnote{The full implementation details of the numerical results that
follow can be found
\href{https://github.com/MantisFEM/Research/tree/paper/2026/L-chain}{here}.}.

\subsection{Vector Laplace problem}

Let the domain  \(\domain \subseteq \mathbb{R}^2\) be the unit square \([0,1]^2\). Since the
domain is simply connected, the continuous de Rham complex has no harmonic vector fields,
and as such, neither should our discretized complex. Considering the mixed weak formulation
of the vector Laplace problem with natural, homogeneous boundary conditions,  the relevant
objects we wish to find are \(\scltrial \in \Hone\) and \(\vectrial \in \Hcurl\) such that
\begin{alignat*}{2}
	\langle \scltrial, \scltest\rangle - \langle \vectrial, \grad\scltest \rangle & = 0,\quad
	                                                                              &                                            & \forall \scltest \in \Hone,                               \\
	\langle \grad\scltrial, \vectest \rangle + \langle \curl \vectrial, \curl \vectest
	\rangle                                                                       & = \langle \forcing, \vectest \rangle,\quad &                             & \forall \vectest\in \Hcurl.
\end{alignat*}
Since the mixed weak-formulation has no boundary conditions imposed, we will use the
notation \(\tilde{\hbspace}^j_{\level}\) to denote the hierarchical space corresponding to
\(\hbspace^j_{\level}\) constructed with open knot-vector univariate B-splines, that is,
with \(p+1\) repetitions of the first and last knots, instead of the conditions used in
\Cref{subsec:univariate-splines}.

The discrete version of the problem is then to find \(\scltrial_h \in
\tilde{\hbspace}^{0}_{L}\) and \(\vectrial_h \in \tilde{\hbspace}^{1}_{L}\) such that
\begin{alignat*}{2}
	\langle \scltrial_h, \scltest_h \rangle - \langle \vectrial_h, \grad\scltest_h \rangle
	        & = 0,\quad                                      &  & \forall \scltest_h \in \tilde{\hbspace}^{0}_{L}\;, \\ \langle
	\grad\scltrial_h, \vectest_h \rangle + \langle \curl \vectrial_h, \curl \vectest_h
	\rangle & = \langle \forcing, \vectest_h \rangle\;,\quad &  & \forall \vectest_h \in
	\tilde{\hbspace}^{1}_{L},
\end{alignat*}
with a choice of polynomial degree \(p=3\) for \textit{Test 1} and \(p=2\) for \textit{Test
2}.

\paragraph{Test 1} For the first test, we will take as the analytical solution
\begin{equation}
	\label{eq:1-form-example-solution}
	\solution(x, y) =
	% [inline block 1: 3 envs, 114291 chars -> data_tex | \begin{bmatrix} 		x(1-x) \\...]

 		\caption{}
		\label{fig:1-form-example-b}
	\end{subfigure}
	\hfill
	\strut

	\caption{
		Two hierarchical meshes for Test 1. Both meshes were created from the same set of
		marked elements, the difference being the use of \cref{alg:exact-mesh} in
		\Cref{fig:1-form-example-b}.
	}
	\label{fig:1-form-example}
\end{figure}

It is easy to see that \(u \in \tilde{\hbspace}^{1}_{L}\), and therefore we should be able
to compute the solution exactly, up to machine precision, in the case of
\Cref{fig:1-form-example-b}. However, due to the problematic intersections in
\Cref{fig:1-form-example-a} we expect to introduce spurious harmonic vector fields. This is
evidenced after computing the corresponding \(L^2\)-norm errors, as we get the values
\begin{equation*}
	\|\vectrial^a_h - \vectrial \| \approx 0.024901,
	\quad
	\|\vectrial^b_h - \vectrial \| \approx 1.303681\times10^{-15}.
\end{equation*}
The appearance of spurious harmonic vector fields introduced by the problematic
intersections can also be seen in \Cref{fig:1-form-example-harmonics}.
\begin{figure}[htbp]
	\centering
	\def\sidelength{3.5cm}
	\begin{minipage}[t]{0.48\textwidth}
		\centering
		\begin{tikzpicture}
			\begin{axis}[
					enlargelimits=false,
					x=\sidelength, y=\sidelength,
					axis on top,
					colorbar, colorbar/width=0.2cm,
					point meta min=1.1e-06, point meta max=0.17,
					colormap/jet,
					colorbar style={
						at={(1.05,0.5)}, anchor=west,
						ytick={0, 0.1, 0.17},
						scaled y ticks=true,
						ticklabel style={font=\small},
					},
				]
				\addplot graphics [
					xmin=0, xmax=1,
					ymin=0, ymax=1,
				] {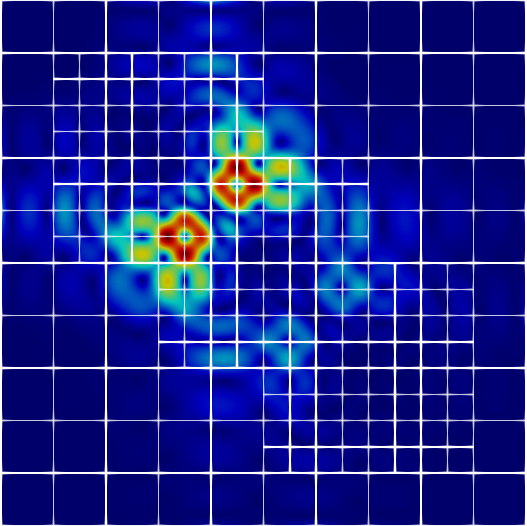};
			\end{axis}
		\end{tikzpicture}
		\caption{
			Plot of \(\|\vectrial^a_h - \vectrial\|\), for \(\vectrial\) as in
			\cref{eq:1-form-example-solution}, showcasing the spurious harmonic vector
			fields.
		}
		\label{fig:1-form-example-harmonics}
	\end{minipage}\hfill
	\begin{minipage}[t]{0.48\textwidth}
		\centering
		\begin{tikzpicture}
			\begin{axis}[
					enlargelimits=false,
					x=\sidelength, y=\sidelength,
					axis on top,
					colorbar, colorbar/width=0.2cm,
					point meta min=0, point meta max=0.012,
					colormap/jet,
					colorbar style={
						at={(1.05,0.5)}, anchor=west,
						ytick={0, 0.006, 0.012},
						scaled y ticks=false,
						yticklabels={0, 0.6, 1.2},
						ylabel={\(\times10^{-2}\)},
						ylabel style={
							rotate=-90,
							at={(-2.0,1.0)},
							anchor=south,
						},
					},
				]
				\addplot graphics [
					xmin=0, xmax=1,
					ymin=0, ymax=1,
				] {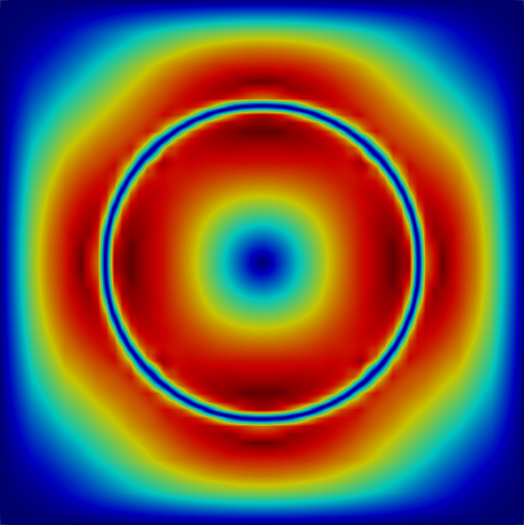};
			\end{axis}
		\end{tikzpicture}
		\caption{
			Magnitude of the analytical solution \cref{eq:1-form-adaptive-solution}.
		}
		\label{fig:1-form-adaptive-solution}
	\end{minipage}
\end{figure}

\paragraph{Test 2} In the second test, the analytical solution is
\begin{equation}
	\label{eq:1-form-adaptive-solution}
	\solution(x,y) =
	\begin{bmatrix}
		2 y^2 (y-1)^2 (x^2-x)(2x-1) \tanh(100((x-0.5)^2+ (y-0.5)^2-0.3^2)) \\
		2 x^2 (x-1)^2 (y^2-y)(2y-1) \tanh(100((x-0.5)^2+ (y-0.5)^2-0.3^2))
	\end{bmatrix}.
\end{equation}
To get a better sense of the circular feature present in the problem, we show a plot of the
analytical solution in \Cref{fig:1-form-adaptive-solution}.

We will perform an adaptive refinement scheme, using the analytical solution to calculate
the \(L^2\)-error per element and a Dörfler parameter \(\theta=0.25\) to mark elements for
refinement.
This parameter determines the cut-off error from which elements are no longer marked.
In other words, we will mark all elements whose local \(L^2\)-error is at least
\((1-\theta)\times100\,\%\) of the maximum element-wise error.
After this marking, we choose as refinement domains the supports of all basis functions
whose supports intersect the Dörfler-marked elements, thus ensuring
\cref{assum:refinement-domains}.
We will start the adaptive loop with a tensor-product mesh and space, and stop the loop
after a pre-determined number of adaptive steps \(N\).
Note that \(N\) may differ from the final number of levels \(L\).
This is enough for our purposes, since we only wish to highlight the appearance of spurious
harmonics in standard refinement schemes that do not consider structure-preservation.
We refer the reader to \cite{Doerfler1996, Buffa2017} for extra details.

\Cref{fig:1-form-adaptive-problematic,fig:1-form-adaptive-lchain}  demonstrate the
difference between the adaptive refinement techniques both with and without the introduction
of \Cref{alg:exact-mesh}.
There, it is easy to see that not only can wrong solutions arise when exact refinements are
not considered during the refinement process, causing a wrong behaviour of the adaptive
algorithm, but also the fact that the number of adaptive steps required to solve problematic
intersections is arbitrary.

\begin{figure}[htbp]
	\centering
	\def\subfigwidth{0.3}
	\def\sidelength{2.8cm}
	\begin{subfigure}[t]{\subfigwidth\textwidth}
		\centering
		\begin{tikzpicture}
			\begin{axis}[
					enlargelimits=false,
					xtick=\empty,
					ytick=\empty,
					x=\sidelength, y=\sidelength,
					axis on top,
					colorbar, colorbar/width=0.1cm,
					point meta min=0, point meta max=6200,
					colormap/jet,
					colorbar style={
						at={(1.05,0.5)}, anchor=west,
						ytick={0, 2000, 4000, 6200},
						yticklabels={0, 2, 4, 6.2},
						ylabel={\(\times10^{3}\)},
						ylabel style={
							rotate=-90,
							at={(-5.0,1.0)},
							anchor=south,
						},
					},
				]
				\addplot graphics [
					xmin=0, xmax=1,
					ymin=0, ymax=1,
				] {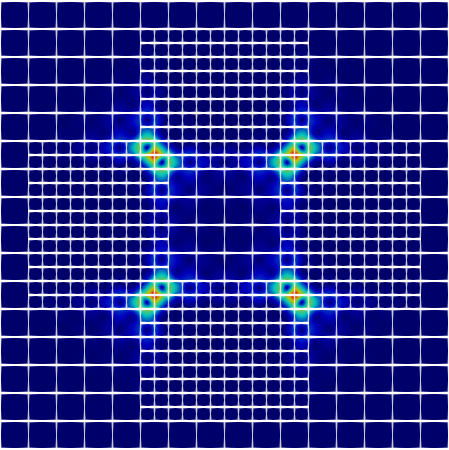};
			\end{axis}
		\end{tikzpicture}
		\caption{Step 1.}
	\end{subfigure}
	\begin{subfigure}[t]{\subfigwidth\textwidth}
		\centering
		\begin{tikzpicture}
			\begin{axis}[
					enlargelimits=false,
					xtick=\empty,
					ytick=\empty,
					x=\sidelength, y=\sidelength,
					axis on top,
					colorbar, colorbar/width=0.1cm,
					point meta min=0, point meta max=6200,
					colormap/jet,
					colorbar style={
						at={(1.05,0.5)}, anchor=west,
						ytick={0, 2000, 4000, 6200},
						yticklabels={0, 2, 4, 6.2},
						ylabel={\(\times10^{3}\)},
						ylabel style={
							rotate=-90,
							at={(-5.0,1.0)},
							anchor=south,
						},
					},
				]
				\addplot graphics [
					xmin=0, xmax=1,
					ymin=0, ymax=1,
				] {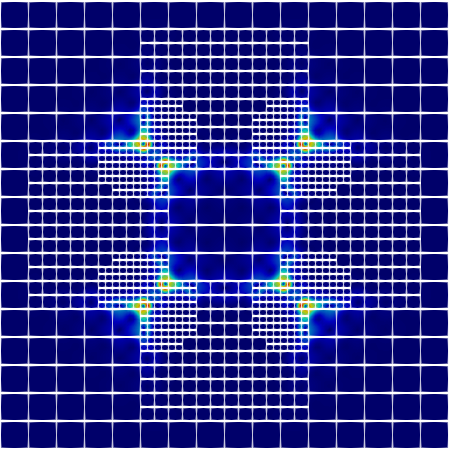};
			\end{axis}
		\end{tikzpicture}
		\caption{Step 2.}
	\end{subfigure}
	\begin{subfigure}[t]{\subfigwidth\textwidth}
		\centering
		\begin{tikzpicture}
			\begin{axis}[
					enlargelimits=false,
					xtick=\empty,
					ytick=\empty,
					x=\sidelength, y=\sidelength,
					axis on top,
					colorbar, colorbar/width=0.1cm,
					point meta min=0, point meta max=2.1e-3,
					colormap/jet,
					colorbar style={
						at={(1.05,0.5)}, anchor=west,
						ytick={0, 0.5e-3, 1e-3, 1.5e-3, 2.1e-3},
						scaled y ticks=false,
						yticklabels={0, 0.5, 1, 1.5, 2.1},
						ylabel={\(\times10^{-3}\)},
						ylabel style={
							rotate=-90,
							at={(-5.0,1.0)},
							anchor=south,
						},
					},
				]
				\addplot graphics [
					xmin=0, xmax=1,
					ymin=0, ymax=1,
				] {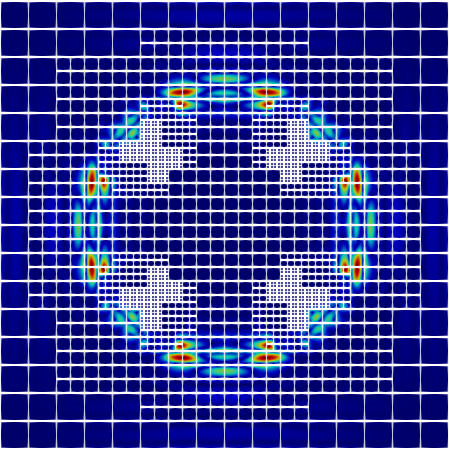};
			\end{axis}
		\end{tikzpicture}
		\caption{Step 3.}
	\end{subfigure}
	\caption{
		Plot of \(\|\vectrial_h - \vectrial\|\) in adaptive refinement scheme without
		\cref{alg:exact-mesh} for the vector Laplace problem with analytical solution
		\eqref{eq:1-form-adaptive-solution}.
		After the first refinement step four problematic intersections are introduced.
		Moreover, despite the fact that the second step correctly refines the regions with
		the highest error, the spurious harmonics have not been resolved, so the overall
		error stays the same.
		By mere chance, the problematic intersections are corrected in Step 3.
		We highlight the different orders of magnitude in the first two sub-figures and the
		last one.
	}
	\label{fig:1-form-adaptive-problematic}
\end{figure}

\begin{figure}[htbp]
	\centering
	\def\subfigwidth{0.3}
	\def\sidelength{2.8cm}
	\begin{subfigure}[t]{\subfigwidth\textwidth}
		\centering
		\begin{tikzpicture}
			\begin{axis}[
					enlargelimits=false,
					xtick=\empty,
					ytick=\empty,
					x=\sidelength, y=\sidelength,
					axis on top,
					colorbar, colorbar/width=0.1cm,
					point meta min=0, point meta max=3.6e-3,
					colormap/jet,
					colorbar style={
						at={(1.05,0.5)}, anchor=west,
						ytick={0, 1e-3, 2e-3, 3e-3, 3.6e-3},
						yticklabels={0, 1, 2, 3, 3.6},
						scaled y ticks=false,
						ylabel={\(\times10^{-3}\)},
						ylabel style={
							rotate=-90,
							at={(-5.0,1.0)},
							anchor=south,
						},
					},
				]
				\addplot graphics [
					xmin=0, xmax=1,
					ymin=0, ymax=1,
				] {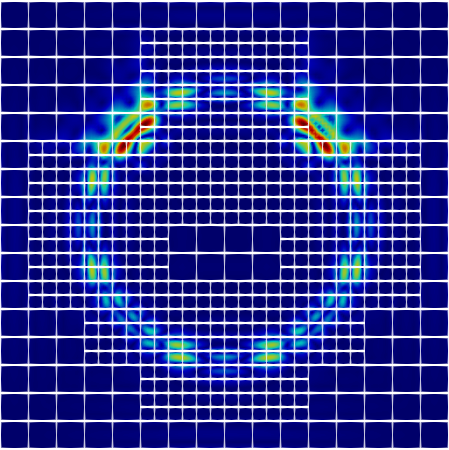};
			\end{axis}
		\end{tikzpicture}
		\caption{Step 1.}
	\end{subfigure}
	\begin{subfigure}[t]{\subfigwidth\textwidth}
		\centering
		\begin{tikzpicture}
			\begin{axis}[
					enlargelimits=false,
					xtick=\empty,
					ytick=\empty,
					x=\sidelength, y=\sidelength,
					axis on top,
					colorbar, colorbar/width=0.1cm,
					point meta min=0, point meta max=2.2e-3,
					colormap/jet,
					colorbar style={
						at={(1.05,0.5)}, anchor=west,
						ytick={0, 0.5e-3, 1e-3, 1.5e-3, 2.2e-3},
						yticklabels={0, 0.5, 1, 1.5, 2.2},
						scaled y ticks=false,
						ylabel={\(\times10^{-3}\)},
						ylabel style={
							rotate=-90,
							at={(-5.0,1.0)},
							anchor=south,
						},
					},
				]
				\addplot graphics [
					xmin=0, xmax=1,
					ymin=0, ymax=1,
				] {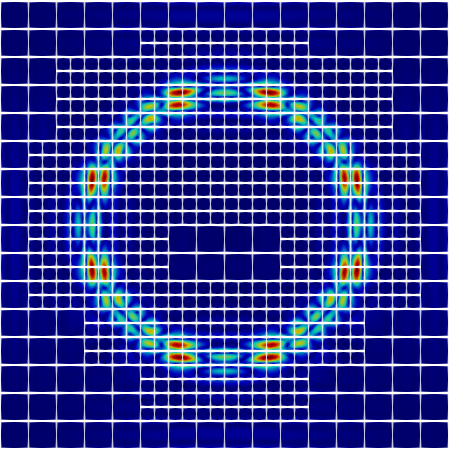};
			\end{axis}
		\end{tikzpicture}
		\caption{Step 2.}
	\end{subfigure}
	\begin{subfigure}[t]{\subfigwidth\textwidth}
		\centering
		\begin{tikzpicture}
			\begin{axis}[
					enlargelimits=false,
					xtick=\empty,
					ytick=\empty,
					x=\sidelength, y=\sidelength,
					axis on top,
					colorbar, colorbar/width=0.1cm,
					point meta min=0, point meta max=1.4e-3,
					colormap/jet,
					colorbar style={
						at={(1.05,0.5)}, anchor=west,
						ytick={0, 0.5e-3, 1e-3, 1.4e-3},
						yticklabels={0, 0.5, 1, 1.4},
						scaled y ticks=false,
						ylabel={\(\times10^{-3}\)},
						ylabel style={
							rotate=-90,
							at={(-5.0,1.0)},
							anchor=south,
						},
					},
				]
				\addplot graphics [
					xmin=0, xmax=1,
					ymin=0, ymax=1,
				] {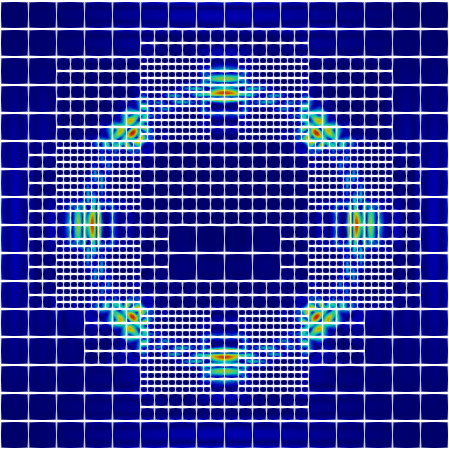};
			\end{axis}
		\end{tikzpicture}
		\caption{Step 3.}
	\end{subfigure}
	\caption{
		Plot of \(\|\vectrial_h - \vectrial\|\) in adaptive refinement scheme with
		\cref{alg:exact-mesh} for the vector Laplace problem with analytical solution
		\eqref{eq:1-form-adaptive-solution}.
		By adding \lchains according to \cref{alg:exact-mesh}, we avoid the problematic
		intersections encountered in \Cref{fig:1-form-adaptive-problematic} and arrive at a
		final mesh with a refinement pattern that is much more natural given the circular
		feature of the solution.
	}
	\label{fig:1-form-adaptive-lchain}
\end{figure}

\subsection{Maxwell eigenvalue problem}

Consider now the domain \(\domain \subseteq \mathbb{R}^2\) as the square \([0,\pi]^2\). In
this case, the problem in question is to find \(\omega \in \mathbb{R}\) and \(\vectrial \in
\Hcurlzero\) such that \(\vectrial \neq \boldvec{0}\) and
\[
	\langle \curl \vectrial, \curl \vectest \rangle = \omega^2\langle \vectrial, \vectest
	\rangle,\ \forall \vectest \in \Hcurlzero.
\]
It is well known that the eigenvalue solutions satisfy \(\omega^2 = m^2+n^2\) with \(m,n \in
\{0,1,\dots\}\). Once again, no harmonic vector fields are expected due to the topology of
the domain. We will also solve this problem on two distinct meshes, see
\Cref{fig:maxwell-example}, to highlight how spurious harmonic vector fields can alter the
solution in subtler ways than before.
\begin{figure}[htbp]
	\centering
	\hfill
	\begin{subfigure}[b]{0.4\textwidth}
		\centering
		% [inline block 2: 2 envs, 331537 chars -> data_tex | \begin{tikzpicture}[scale=1.2] \coordinate (v1_1) at (0.0, 0.0);...]

 		\caption{}
		\label{fig:maxwell-example-b}
	\end{subfigure}
	\hfill
	\strut

	\caption{
		Two hierarchical meshes for the test of Maxwell eigenvalue problem.
		Both meshes were created from the same set of marked elements, the
		difference being the use of \cref{alg:exact-mesh} in \Cref{fig:maxwell-example-b}.
	}
	\label{fig:maxwell-example}
\end{figure}
After discretizing the relevant weak form, we wish to find \(\omega_h \in \mathbb{R}\) and
\(\vectrial_h \in \hbspace^{1}_{L}, \vectrial_h \not = \boldvec{0},\) such that
\[
	\langle \curl \vectrial_h, \curl \vectest_h \rangle = \omega^2_h \langle \vectrial_h,
	\vectest_h \rangle,\quad \forall \vectest_h \in \hbspace^{1}_{L},
\]
with \(p=4\). In \cref{tab:maxwell-example} we show the first ten non-zero computed eigenvalues in
the two cases of \Cref{fig:maxwell-example}, showing that the lack of exactness causes the
appearance of four extraneous zero eigenvalues. Note that in the case of mesh
\Cref{fig:maxwell-example-a} the null-space has four extraneous null eigenvalues, because
the mesh has four problematic intersections. This is to be expected since each problematic
intersection is responsible for introducing harmonic forms, which belong to
\(\kernel(\curl)\).
The presence of these spurious modes is shown in \Cref{fig:maxwell-example-eigenvalues}. The
correction of the refinement, as performed by our algorithms, recovers the exactness of the
sequence and the correct spectrum.

\begin{table}[htbp]
	\centering
	\begin{tabular}{
			c
			S[table-format=1.2e-2, round-mode = places, round-precision = 2]
			S[table-format=1.2e-2, round-mode = places, round-precision = 2]
		}
		\toprule
		{}                                   & {\Cref{fig:maxwell-example-a}}  & {\Cref{fig:maxwell-example-b}}
		\\ \midrule
		\makecell{Null-space \\ dimension}                 & ~{4}                            & ~{0}
		\\ \midrule
		{Eigenvalue}                         & \multicolumn{2}{c}{Abs. ~error}
		\\ \cmidrule{2-3}
		\csvreader[
		late after line =                                                                                       \\,
			filter ifthen = { \thecsvrow < 10 },
		]{figures/maxwell-eigenvalue.csv}{1=\eigenvalue, 4=\errorA, 5=\errorB}{\num[drop-zero-decimal]{\eigenvalue} & \errorA                         & \errorB
		}\bottomrule
	\end{tabular}
	\caption{
		First 10 exact eigenvalues of the Maxwell eigenvalue problem and the absolute
		errors for the computed values using the meshes of \Cref{fig:maxwell-example}.
	}
	\label{tab:maxwell-example}
\end{table}

\begin{figure}[ht]
	\centering
	\begin{tikzpicture}
		\begin{axis}[
				xlabel={Non-null spectrum rank},
				ylabel={Eigenvalue},
				legend entries={
						Exact, \Cref{fig:maxwell-example-a}, \Cref{fig:maxwell-example-b}
					},
				legend pos=north west,
				width=\textwidth,
				height=8cm,
			]
			\addplot+[
				only marks, mark size=3pt, mark=o, color=black,
			] table [
					x expr=\coordindex+1, y=eigenvalue, col sep=comma
				] {figures/maxwell-eigenvalue.csv};
			\addplot+[
				only marks, mark size=3pt, mark=+, mark options={draw=red, fill=black}
			] table [
					x expr=\coordindex+1, y=computed_a, col sep=comma
				] {figures/maxwell-eigenvalue.csv};
			\addplot+[
				only marks, mark size=3pt, mark=x, mark options={draw=black, fill=black}
			] table [
					x expr=\coordindex+1, y=computed_b, col sep=comma
				] {figures/maxwell-eigenvalue.csv};
		\end{axis}
	\end{tikzpicture}
	\caption{
		Comparison of the first 50 non-null eigenvalues computed using the two meshes of
		\Cref{fig:maxwell-example}. We are now accounting for both the offset of the
		gradients of scalar B-splines and the harmonic forms introduced by problematic
		intersections, which is 4 and 0 for the cases \Cref{fig:maxwell-example-a} and
		\Cref{fig:maxwell-example-b}, respectively. Due to the four problematic
		intersections, the refinement configuration introduces four spurious eigenvalues,
		that in this case are around 27.2, 27.2, 30.4, and 42.8. This behaviour is of course
		not present when problematic intersections are absent.
	}
	\label{fig:maxwell-example-eigenvalues}
\end{figure}

 \section{Conclusions}\label{sec:conclusion}

A key ingredient in the construction of stable numerical methods for electromagnetics and
fluid mechanics is the construction of a discrete de Rham complex that preserves the
cohomology of the continuous one. While the construction of such complexes with
tensor-product splines is well-understood, doing so with splines that allow for local
refinement is not. In this paper, focusing on the use of hierarchical B-spline spaces in two
dimensions, we have presented a first, constructive approach for building an
adaptively-refinable discrete de Rham complex with the right cohomology.

Our approach builds upon recent results from \cite{Shepherd2024}. They describe sufficient
conditions for hierarchical meshes (in an arbitrary number of dimensions) such that the
hierarchical B-spline spaces built on them form an exact discrete de Rham complex. In two
dimensions, given a hierarchical mesh which does not satisfy those conditions, we have now
shown how it can be modified to do so, by additional refinement of L-shaped chains. We have
also presented algorithms that show how our theoretical results can be implemented.
Moreover, we prove that, if the first space in the de Rham complex is of $\mathcal{H}$- or
$\mathcal{T}$-admissible of class $m$, then so are the rest of the spaces of the complex.
Ensuring admissibility is important not just from the point of view of the theory of
hierarchical B-splines \cite{Buffa2015}, but also for the stability of the numerical methods
built on them \cite{Carraturo2019}. Our theoretical results imply that the algorithms
developed for maintaining admissibility of hierarchical B-spline spaces (e.g.,
\cite{Bracco2019}) can be immediately used in the context of spline differential forms.
Finally, we present numerical tests (both source and eigenvalue problems) that demonstrate
the effectiveness of our approach.

There are several interesting lines of research that remain open. For instance, in
higher-dimensional domains a different mesh adaption is needed. The main algorithm proposed
in this paper relies entirely on the assumption that a limited number of pair-wise checks
are sufficient to confirm, or rule out, exactness of the hierarchical complex at each level;
this is not the case for higher-dimensional domains. It is therefore likely that an
algorithm akin to the one we propose could, for higher dimensions, introduce an unpractical
amount of computational complexity. In that scenario, to ensure exactness of the
hierarchical complex it is preferable to a priori restrict the allowed refinement domains
such that exactness is always guaranteed, such as proposed in \cite{Dijkstra2026}.

It will also be appealing to study if the smallest quantum of refinement could be
reduced from supports of $0$-form B-splines to supports of volume-form B-splines, as was
done in \cite{Evans2018}, thereby yielding slightly more flexible refinements. Another
worthwhile research direction was alluded to at the end of Section \ref{sec:preliminaries}:
the formulation of bounded cochain-projections for the hierarchical de Rham complex. These
and other topics will be the subject of future work.
 
\section*{Acknowledgments}

Diogo C. Cabanas is supported by FCT - Fundação para a Ciência e Tecnologia, I.P., with
project reference 2023.00238.BD and DOI identifier https://doi.org/10.54499/2023.00238.BD. 
The research of Deepesh Toshniwal is supported by project number 212.150 awarded through the
Veni research programme by the Dutch Research Council (NWO).
 
\bibliographystyle{amsplain}
\bibliography{bibliography}

@article{Carraturo2019,
title = {Suitably graded {THB}-spline refinement and coarsening: Towards an adaptive isogeometric analysis of additive manufacturing processes},
journal = {Computer Methods in Applied Mechanics and Engineering},
volume = {348},
pages = {660-679},
year = {2019},
issn = {0045-7825},
author = {Massimo Carraturo and Carlotta Giannelli and Alessandro Reali and Rafael Vázquez},
}

@Article{Giannelli2012,
  author    = {Giannelli, Carlotta and Jüttler, Bert and Speleers, Hendrik},
  journal   = {Computer Aided Geometric Design},
  title     = {{THB}-splines: The truncated basis for hierarchical splines},
  year      = {2012},
  issn      = {0167-8396},
  month     = oct,
  number    = {7},
  pages     = {485--498},
  volume    = {29},
  abstract  = {Computer Aided Geometric Design, 29 (2012) 485â•ﬁ498. 10.1016/j.cagd.2012.03.025},
  doi       = {10.1016/j.cagd.2012.03.025},
  file      = {:Giannelli-2012-01.pdf:PDF},
  groups    = {IGA, Hierarchical},
  keywords  = {Hierarchical tensorâ•ﬁproduct B-splines, Truncated basis, Local refinement, Partition of unity},
  publisher = {Elsevier BV},
}

@book {Hatcher,
    AUTHOR = {Hatcher, A.},
     TITLE = {Algebraic topology},
 PUBLISHER = {Cambridge University Press, Cambridge},
      YEAR = {2002},
     PAGES = {xii+544},
      ISBN = {0-521-79160-X; 0-521-79540-0},
   MRCLASS = {55-01 (55-00)},
  MRNUMBER = {1867354 (2002k:55001)},
MRREVIEWER = {Donald W. Kahn},
}

@Article{Arnold2009,
  author        = {Arnold, Douglas N. and Falk, Richard S. and Winther, Ragnar},
  journal       = {Bull. Amer. Math. Soc. (N.S.), 47:281-354, 2010},
  title         = {Finite element exterior calculus: from {H}odge theory to numerical stability},
  year          = {2009},
  issn          = {1088-9485},
  month         = jan,
  number        = {2},
  pages         = {281--354},
  volume        = {47},
  abstract      = {This article reports on the confluence of two streams of research, one emanating from the fields of numerical analysis and scientific computation, the other from topology and geometry. In it we consider the numerical discretization of partial differential equations that are related to differential complexes so that de Rham cohomology and Hodge theory are key tools for the continuous problem. After a brief introduction to finite element methods, the discretization methods we consider, we develop an abstract Hilbert space framework for analyzing stability and convergence. In this framework, the differential complex is represented by a complex of Hilbert spaces and stability is obtained by transferring Hodge theoretic structures from the continuous level to the discrete. We show stable discretization is achieved if the finite element spaces satisfy two hypotheses: they form a subcomplex and there exists a bounded cochain projection from the full complex to the subcomplex. Next, we consider the most canonical example of the abstract theory, in which the Hilbert complex is the de Rham complex of a domain in Euclidean space. We use the Koszul complex to construct two families of finite element differential forms, show that these can be arranged in subcomplexes of the de Rham complex in numerous ways, and for each construct a bounded cochain projection. The abstract theory therefore applies to give the stability and convergence of finite element approximations of the Hodge Laplacian. Other applications are considered as well, especially to the equations of elasticity. Background material is included to make the presentation self-contained for a variety of readers.},
  archiveprefix = {arXiv},
  copyright     = {arXiv.org perpetual, non-exclusive license},
  doi           = {10.1090/s0273-0979-10-01278-4},
  eprint        = {0906.4325},
  file          = {:Arnold-2009-01.pdf:PDF},
  groups        = {FEEC},
  keywords      = {Numerical Analysis (math.NA), Differential Geometry (math.DG), Geometric Topology (math.GT), FOS: Mathematics, 65N30, 58A14},
  primaryclass  = {math.NA},
  publisher     = {American Mathematical Society (AMS)},
}

@Book{Arnold2018,
  author    = {Arnold, Douglas N.},
  publisher = {Society for Industrial and Applied Mathematics, SIAM},
  title     = {Finite element exterior calculus},
  year      = {2018},
  address   = {Philadelphia},
  isbn      = {9781611975543},
  number    = {93},
  series    = {CBMS-NSF regional conference series in applied mathematics},
  file      = {:/home/dccabanas/Files/20-29-PhD/23-References/Arnold-2018-01.pdf:PDF},
  groups    = {FEEC},
  pagetotal = {1120},
  ppn_gvk   = {1655351966},
}

@Article{Hughes2005,
  author    = {Hughes, T.J.R. and Cottrell, J.A. and Bazilevs, Y.},
  journal   = {Computer Methods in Applied Mechanics and Engineering},
  title     = {Isogeometric analysis: {CAD}, finite elements, {NURBS}, exact geometry and mesh refinement},
  year      = {2005},
  issn      = {0045-7825},
  month     = oct,
  number    = {39–41},
  pages     = {4135--4195},
  volume    = {194},
  doi       = {10.1016/j.cma.2004.10.008},
  groups    = {IGA},
  publisher = {Elsevier BV},
}

@Article{Arnold2002,
  author    = {Arnold, Douglas N. and Winther, Ragnar},
  journal   = {Numerische Mathematik},
  title     = {Mixed finite elements for elasticity},
  year      = {2002},
  issn      = {0945-3245},
  month     = sep,
  number    = {3},
  pages     = {401--419},
  volume    = {92},
  doi       = {10.1007/s002110100348},
  groups    = {FEEC, Elasticity},
  publisher = {Springer Science and Business Media LLC},
}

@Book{Piegl1997,
  author    = {Piegl, Les and Tiller, Wayne},
  publisher = {Springer Berlin Heidelberg},
  title     = {The NURBS Book},
  year      = {1997},
  isbn      = {9783642592232},
  doi       = {10.1007/978-3-642-59223-2},
  file      = {:Piegl-1995-01.pdf:PDF},
  groups    = {IGA},
  issn      = {1431-6897},
  journal   = {Monographs in Visual Communication},
}

@Book{Schumaker2007,
  author    = {Schumaker, Larry},
  publisher = {Cambridge University Press},
  title     = {Spline Functions: Basic Theory},
  year      = {2007},
  isbn      = {9780511618994},
  month     = aug,
  doi       = {10.1017/cbo9780511618994},
  file      = {:Schumaker-2007-01.pdf:PDF},
  groups    = {IGA},
  priority  = {prio1},
}

@Article{Giannelli2013,
  author    = {Giannelli, Carlotta and Jüttler, Bert and Speleers, Hendrik},
  journal   = {Advances in Computational Mathematics},
  title     = {Strongly stable bases for adaptively refined multilevel spline spaces},
  year      = {2013},
  issn      = {1572-9044},
  month     = sep,
  number    = {2},
  pages     = {459--490},
  volume    = {40},
  doi       = {10.1007/s10444-013-9315-2},
  file      = {:Giannelli-2013-01.pdf:PDF},
  groups    = {IGA, Hierarchical},
  publisher = {Springer Science and Business Media LLC},
}

@Article{Buffa2017,
  author    = {Buffa, Annalisa and Giannelli, Carlotta},
  journal   = {Mathematical Models and Methods in Applied Sciences},
  title     = {Adaptive isogeometric methods with hierarchical splines: Optimality and convergence rates},
  year      = {2017},
  issn      = {1793-6314},
  month     = nov,
  number    = {14},
  pages     = {2781--2802},
  volume    = {27},
  doi       = {10.1142/s0218202517500580},
  file      = {:Buffa-2017-01.pdf:PDF},
  groups    = {IGA, Hierarchical, Quasi-interpolation},
  publisher = {World Scientific Pub Co Pte Ltd},
}

@Article{Arnold2006,
  author    = {Arnold, Douglas N. and Falk, Richard S. and Winther, Ragnar},
  journal   = {Acta Numerica},
  title     = {Finite element exterior calculus, homological techniques, and applications},
  year      = {2006},
  issn      = {1474-0508},
  month     = may,
  pages     = {1--155},
  volume    = {15},
  doi       = {10.1017/s0962492906210018},
  file      = {:Arnold-2006-02.pdf:PDF},
  groups    = {FEEC},
  publisher = {Cambridge University Press (CUP)},
}

@Book{Boor2001,
  author    = {Boor, Carl De},
  publisher = {Springer},
  title     = {A practical guide to splines},
  year      = {2001},
  address   = {New York},
  edition   = {Rev. ed},
  isbn      = {0387953663},
  note      = {Includes bibliographical references (p. 331-339) and index},
  number    = {v. 27},
  series    = {Applied mathematical sciences},
  groups    = {IGA},
  pagetotal = {346},
  ppn_gvk   = {741761580},
  subtitle  = {With 32 figures},
}

@Article{Bressan2019,
  author    = {Bressan, Andrea and Sande, Espen},
  journal   = {Numerische Mathematik},
  title     = {Approximation in {FEM}, {DG} and {IGA}: a theoretical comparison},
  year      = {2019},
  issn      = {0945-3245},
  month     = aug,
  number    = {4},
  pages     = {923--942},
  volume    = {143},
  doi       = {10.1007/s00211-019-01063-5},
  groups    = {IGA, FEM},
  publisher = {Springer Science and Business Media LLC},
}

@Book{Hughes2000,
  author    = {Thomas J. R. Hughes},
  publisher = {Dover Publications},
  title     = {The finite element method},
  year      = {2000},
  address   = {Mineola, NY},
  isbn      = {1621985881},
  note      = {Reprint. Originally published: Englewood Cliffs, N.J. : Prentice-Hall, 1987.},  groups    = {FEM},
  pagetotal = {682},
  ppn_gvk   = {165914552X},
  subtitle  = {Linear static and dynamic finite element analysis},
}

@Book{Brenner2008,
  author    = {Brenner, Susanne C. and Scott, L. Ridgway},
  publisher = {Springer New York},
  title     = {The Mathematical Theory of Finite Element Methods},
  year      = {2008},
  isbn      = {9780387759340},
  doi       = {10.1007/978-0-387-75934-0},
  file      = {:Brenner-2008-01.pdf:PDF},
  groups    = {FEM},
  issn      = {0939-2475},
  journal   = {Texts in Applied Mathematics},
}

@Book{Farin2006,
  author    = {Farin, Gerald E.},
  publisher = {Morgan Kaufmann Publ.},
  title     = {Curves and surfaces for CAGD},
  year      = {2006},
  address   = {San Francisco, Calif. [u.a.]},
  edition   = {5. ed., [Nachdr.]},
  isbn      = {9781558607378},
  note      = {Includes bibliographical references (p. 449-489)},
  series    = {The @Morgan Kaufmann series in computer graphics and geometric modeling},
  groups    = {IGA},
  pagetotal = {499},
  ppn_gvk   = {550329706},
  subtitle  = {A practical guide},
}

@Article{Sande2020,
  author    = {Sande, Espen and Manni, Carla and Speleers, Hendrik},
  journal   = {Numerische Mathematik},
  title     = {Explicit error estimates for spline approximation of arbitrary smoothness in isogeometric analysis},
  year      = {2020},
  issn      = {0945-3245},
  month     = jan,
  number    = {4},
  pages     = {889--929},
  volume    = {144},
  doi       = {10.1007/s00211-019-01097-9},
  file      = {:Sande-2020-01.pdf:PDF},
  groups    = {IGA},
  publisher = {Springer Science and Business Media LLC},
}

@Article{Sederberg2003,
  author    = {Sederberg, Thomas W. and Zheng, Jianmin and Bakenov, Almaz and Nasri, Ahmad},
  journal   = {ACM Transactions on Graphics},
  title     = {T-splines and {T-NURCC}s},
  year      = {2003},
  issn      = {1557-7368},
  month     = jul,
  number    = {3},
  pages     = {477--484},
  volume    = {22},
  doi       = {10.1145/882262.882295},
  groups    = {IGA},
  publisher = {Association for Computing Machinery (ACM)},
}

@Article{Buffa2015,
  author    = {Buffa, Annalisa and Giannelli, Carlotta},
  journal   = {Mathematical Models and Methods in Applied Sciences},
  title     = {Adaptive isogeometric methods with hierarchical splines: Error estimator and convergence},
  year      = {2015},
  issn      = {1793-6314},
  month     = nov,
  number    = {01},
  pages     = {1--25},
  volume    = {26},
  doi       = {10.1142/s0218202516500019},
  file      = {:Buffa-2015-01.pdf:PDF},
  groups    = {IGA, Hierarchical},
  publisher = {World Scientific Pub Co Pte Ltd},
}

@Article{Sederberg2004,
  author    = {Sederberg, Thomas W. and Cardon, David L. and Finnigan, G. Thomas and North, Nicholas S. and Zheng, Jianmin and Lyche, Tom},
  journal   = {ACM Transactions on Graphics},
  title     = {T-spline simplification and local refinement},
  year      = {2004},
  issn      = {1557-7368},
  month     = aug,
  number    = {3},
  pages     = {276--283},
  volume    = {23},
  doi       = {10.1145/1015706.1015715},
  groups    = {Hierarchical, IGA},
  publisher = {Association for Computing Machinery (ACM)},
}

@Article{Bracco2019,
  author    = {Bracco, Cesare and Buffa, Annalisa and Giannelli, Carlotta and Vázquez, Rafael},
  journal   = {Discrete \& Continuous Dynamical Systems - A},
  title     = {Adaptive isogeometric methods with hierarchical splines: An overview},
  year      = {2019},
  issn      = {1553-5231},
  number    = {1},
  pages     = {241--261},
  volume    = {39},
  doi       = {10.3934/dcds.2019010},
  file      = {:Bracco-2019-01.pdf:PDF},
  groups    = {IGA, Hierarchical, Adaptivity},
  publisher = {American Institute of Mathematical Sciences (AIMS)},
}

@Article{Buffa2010,
  author    = {Buffa, A. and Sangalli, G. and Vázquez, R.},
  journal   = {Computer Methods in Applied Mechanics and Engineering},
  title     = {Isogeometric analysis in electromagnetics: B-splines approximation},
  year      = {2010},
  issn      = {0045-7825},
  month     = mar,
  number    = {17–20},
  pages     = {1143--1152},
  volume    = {199},
  doi       = {10.1016/j.cma.2009.12.002},
  file      = {:Buffa-2010-01.pdf:PDF},
  groups    = {IGA, FEEC},
  publisher = {Elsevier BV},
}

@Article{Buffa2011,
  author     = {Buffa, A. and Rivas, J. and Sangalli, G. and Vázquez, R.},
  journal    = {SIAM Journal on Numerical Analysis},
  title      = {Isogeometric Discrete Differential Forms in Three Dimensions},
  year       = {2011},
  issn       = {1095-7170},
  month      = jan,
  number     = {2},
  pages      = {818--844},
  volume     = {49},
  doi        = {10.1137/100786708},
  file       = {:Buffa-2011-01.pdf:PDF},
  groups     = {IGA, FEEC, Cochain Projections},
  publisher  = {Society for Industrial & Applied Mathematics (SIAM)},
  readstatus = {read},
}

@Article{Evans2018,
  author    = {Evans, John A and Scott, Michael A and Shepherd, Kendrick M and Thomas, Derek C and Vázquez Hernández, Rafael},
  journal   = {IMA Journal of Numerical Analysis},
  title     = {Hierarchical {B}-spline complexes of discrete differential forms},
  year      = {2018},
  issn      = {1464-3642},
  month     = dec,
  number    = {1},
  pages     = {422--473},
  volume    = {40},
  abstract  = {In this paper, we introduce the hierarchical B-spline complex of discrete differential forms for arbitrary spatial dimension. This complex may be applied to the adaptive isogeometric solution of problems arising in electromagnetics and fluid mechanics. We derive a sufficient and necessary condition guaranteeing exactness of the hierarchical B-spline complex for arbitrary spatial dimension, and we derive a set of local, easy-to-compute, and sufficient exactness conditions for the two-dimensional setting. We examine the stability properties of the hierarchical B-spline complex, and we find that it yields stable approximations of both the Maxwell eigenproblem and Stokes problem provided that the local exactness conditions are satisfied. We conclude by providing numerical results showing the promise of the hierarchical B-spline complex in an adaptive isogeometric solution framework.},
  doi       = {10.1093/imanum/dry077},
  file      = {:Evans-2018-01.pdf:PDF},
  groups    = {IGA, FEEC, Hierarchical},
  publisher = {Oxford University Press (OUP)},
}

@Article{Speleers2021,
  author    = {Speleers, Hendrik and Toshniwal, Deepesh},
  journal   = {Computer-Aided Design},
  title     = {A General Class of {$C^1$} Smooth Rational Splines: Application to Construction of Exact Ellipses and Ellipsoids},
  year      = {2021},
  issn      = {0010-4485},
  month     = mar,
  pages     = {102982},
  volume    = {132},
  doi       = {10.1016/j.cad.2020.102982},
  file      = {:Speelers-2021-01.pdf:PDF},
  groups    = {IGA, Polar},
  publisher = {Elsevier BV},
}

@Article{Shepherd2024,
  author    = {Shepherd, Kendrick and Toshniwal, Deepesh},
  journal   = {Foundations of Computational Mathematics},
  title     = {Locally-Verifiable Sufficient Conditions for Exactness of the Hierarchical {B}-spline Discrete de Rham Complex in {$\mathbb{R}^n$}},
  year      = {2024},
  issn      = {1615-3383},
  month     = dec,
  abstract  = {Given a domain $\Omega \subset \mathbb{R}^n$, the de Rham complex of differential forms arises naturally in the study of problems in electromagnetism and fluid mechanics defined on $\Omega$, and its discretization helps build stable numerical methods for such problems. For constructing such stable methods, one critical requirement is ensuring that the discrete subcomplex is cohomologically equivalent to the continuous complex. When $\Omega$ is a hypercube, we thus require that the discrete subcomplex be exact. Focusing on such $\Omega$, we theoretically analyze the discrete de Rham complex built from hierarchical B-spline differential forms, i.e., the discrete differential forms are smooth splines and support adaptive refinements - these properties are key to enabling accurate and efficient numerical simulations. We provide locally-verifiable sufficient conditions that ensure that the discrete spline complex is exact. Numerical tests are presented to support the theoretical results, and the examples discussed include complexes that satisfy our prescribed conditions as well as those that violate them.},
  doi       = {10.1007/s10208-024-09659-6},
  file      = {:Shepherd-2024-01.pdf:PDF},
  groups    = {IGA, FEEC, Hierarchical},
  keywords  = {Numerical Analysis (math.NA), Algebraic Topology (math.AT), Differential Geometry (math.DG), Geometric Topology (math.GT), FOS: Mathematics, G.1.8, 65N30, 58A12},
  publisher = {Springer Science and Business Media LLC},
}

@Article{Sande2019,
  author    = {Sande, Espen and Manni, Carla and Speleers, Hendrik},
  journal   = {Mathematical Models and Methods in Applied Sciences},
  title     = {Sharp error estimates for spline approximation: Explicit constants, {$n$}-widths, and eigenfunction convergence},
  year      = {2019},
  issn      = {1793-6314},
  month     = jun,
  number    = {06},
  pages     = {1175--1205},
  volume    = {29},
  doi       = {10.1142/s0218202519500192},
  groups    = {IGA},
  publisher = {World Scientific Pub Co Pte Ltd},
}

@Article{Evans2009,
  author    = {Evans, John A. and Bazilevs, Yuri and Babuška, Ivo and Hughes, Thomas J.R.},
  journal   = {Computer Methods in Applied Mechanics and Engineering},
  title     = {{$n$}-{W}idths, sup–infs, and optimality ratios for the {$k$}-version of the isogeometric finite element method},
  year      = {2009},
  issn      = {0045-7825},
  month     = may,
  number    = {21–26},
  pages     = {1726--1741},
  volume    = {198},
  doi       = {10.1016/j.cma.2009.01.021},
  groups    = {IGA},
  publisher = {Elsevier BV},
}

@Article{Doerfler1996,
  author    = {Dörfler, Willy},
  journal   = {SIAM Journal on Numerical Analysis},
  title     = {A {C}onvergent {A}daptive {A}lgorithm for {P}oisson’s {E}quation},
  year      = {1996},
  issn      = {1095-7170},
  month     = jun,
  number    = {3},
  pages     = {1106--1124},
  volume    = {33},
  doi       = {10.1137/0733054},
  file      = {:Dorfler-1996-01.pdf:PDF},
  groups    = {Hierarchical},
  publisher = {Society for Industrial & Applied Mathematics (SIAM)},
}

@Book{Ciarlet2002,
  author    = {Ciarlet, Philippe G.},
  publisher = {Society for Industrial and Applied Mathematics (SIAM, 3600 Market Street, Floor 6, Philadelphia, PA 19104)},
  title     = {The {F}inite {E}lement {M}ethod for {E}lliptic {P}roblems},
  year      = {2002},
  address   = {Philadelphia, Pa.},
  isbn      = {9780898719208},
  note      = {System requirements: Adobe Acrobat Reader},
  number    = {40},
  series    = {Classics in applied mathematics},
  groups    = {FEM},
  pagetotal = {1530},
  ppn_gvk   = {680116206},
}

@Article{Buffa2022,
  author    = {Buffa, Annalisa and Gantner, Gregor and Giannelli, Carlotta and Praetorius, Dirk and Vázquez, Rafael},
  journal   = {Archives of Computational Methods in Engineering},
  title     = {Mathematical {F}oundations of {A}daptive {I}sogeometric {A}nalysis},
  year      = {2022},
  issn      = {1886-1784},
  month     = sep,
  number    = {7},
  pages     = {4479--4555},
  volume    = {29},
  doi       = {10.1007/s11831-022-09752-5},
  file      = {:Buffa-2022-01.pdf:PDF},
  groups    = {IGA, Hierarchical, Adaptivity},
  publisher = {Springer Science and Business Media LLC},
}

@article{Vuong2011,
	author      = {Vuong, A.-V. and Giannelli, C. and J\"uttler, B. and Simeon, B.},
	title       = {A Hierarchical Approach to Adaptive Local Refinement in Isogeometric Analysis},
	JOURNAL = {Comput. Methods Appl. Mech. Engrg.},
	FJOURNAL = {Computer Methods in Applied Mechanics and Engineering},
	volume      = {200},
	year        = {2011},
	pages       = {3554-3567}
}

@PhdThesis{Kraft1998,
  author = {Kraft, Rainer},
  school = {Universität Stuttgart},
  title  = {Adaptive und linear unabhängige Multilevel B-Splines und ihre Anwendungen},
  year   = {1998},
  groups = {Hierarchical},
}

@Article{Dijkstra2026,
  author    = {Dijkstra, Kevin and Giannelli, Carlotta and Toshniwal, Deepesh},
  journal   = {Computer Methods in Applied Mechanics and Engineering},
  title     = {Macro-element refinement schemes for THB-splines: Applications to Bézier projection and structure-preserving discretizations},
  year      = {2026},
  issn      = {0045-7825},
  month     = apr,
  pages     = {118707},
  volume    = {452},
  doi       = {10.1016/j.cma.2025.118707},
  file      = {:Dijkstra2026.pdf:PDF},
  groups    = {FEEC, Hierarchical, IGA},
  publisher = {Elsevier BV},
}

@Misc{C.Cabanas2025,
  author    = {C. Cabanas, Diogo and Dekker, Joey and Palha, Artur and Toshniwal, Deepesh},
  title     = {Mantis.jl a Structure-Preserving Finite Element Library},
  year      = {2025},
  copyright = {MIT License},
  doi       = {10.5281/ZENODO.17495870},
  groups    = {Software Packages},
  keywords  = {FEM, FEEC, IGA, Structure Preserving, Mimetic Discretizations, Adaptive Mesh Refinement},
  publisher = {Zenodo},
}
\end{document}